\definecolor{darkgreen}{rgb}{0,0.45,0}
\definecolor{darkred}{rgb}{0.75,0,0}
\definecolor{darkblue}{rgb}{0,0,0.6}
\newcommand{\adj}[1][]{\def\ArgI{#1}\adjRelayI}
\newcommand{\adjRelayI}[1][]{\def\ArgII{#1}\adjRelayII}
\newcommand{\adjRelayII}[3][2.2em]{\ensuremath{\SelectTips{cm}{10}\xymatrix@C=#1@1{{#2} \ar@<1ex>[r]^-{\ArgI}^-{}="1" & {#3} \ar@<1ex>[l]^-{\ArgII}^-{}="2" \ar@{}"1";"2"|(.3){\hbox{}}="7" \ar@{}"1";"2"|(.7){\hbox{}}="8" \ar@{|-} "8" ;"7"}}}
\theoremstyle{plain}
\newtheorem{theorem}{Theorem}[section]
\newtheorem*{theoremstar}{Theorem}
\newtheorem{proposition}[theorem]{Proposition}
\newtheorem{lemma}[theorem]{Lemma}
\newtheorem{corollary}[theorem]{Corollary}
\theoremstyle{definition}
\newtheorem{definition}[theorem]{Definition}
\newtheorem{notation}[theorem]{Notation}
\newtheorem{remark}[theorem]{Remark}
\newcommand{\C}{\mathscr{C}} 
\newcommand{\D}{\mathscr{D}} 
\newcommand{\E}{\mathcal{E}}
\newcommand{\M}{\mathcal{M}}
\newcommand{\U}{\mathcal{U}} 
\renewcommand{\phi}{\varphi}
\newcommand{\cod}{\mathrm{cod}} 
\newcommand{\ev}{\mathrm{ev}}  
\newcommand{\name}[1]{{\ulcorner {#1} \urcorner}}
\newcommand{\id}{\mathrm{id}}
\newcommand{\li}[1]{\overline{#1}} 
\newcommand{\ti}[1]{\widetilde{#1}} 
\newbox\anglebox
\def\angle{\copy\anglebox} 
\newcommand{\of}{\! : \!} 
\DeclareMathOperator{\hofib}{hofib}
\DeclareMathOperator{\Eq}{Eq}
\DeclareMathOperator{\Ext}{Ext}
\DeclareMathOperator{\pr}{pr}
\DeclarePairedDelimiterX{\norm}[1]{\lVert}{\rVert}{#1}
\DeclarePairedDelimiterX{\abs}[1]{\lvert}{\rvert}{#1}
\newbox\xrat@below
\newbox\xrat@above
\newcommand{\xrightarrowtail}[2][]{%
  \setbox\xrat@below=\hbox{\ensuremath{\scriptstyle #1}}%
  \setbox\xrat@above=\hbox{\ensuremath{\scriptstyle #2}}%
  \pgfmathsetlengthmacro{\xrat@len}{max(\wd\xrat@below,\wd\xrat@above)+.6em}%
  \mathrel{\tikz [>->,baseline=-.75ex]
                 \draw (0,0) -- node[below=-2pt] {\box\xrat@below}
                                node[above=-2pt] {\box\xrat@above}
                       (\xrat@len,0) ;}}
\begin{document}
\title{$L'$-localization in an $\infty$-topos}

\author{Marco Vergura}
\email{mvergura@uwo.ca}
\date{8 July 2019}
\subjclass[2010]{Primary 55P60;
Secondary  18E35}

\begin{abstract}
We prove that, given any reflective subfibration $L_\bullet$ on an $\infty$-topos $\E$, there exists a reflective subfibration $L'_\bullet$ on $\E$ whose local maps are the $L$-separated maps, that is, the maps whose diagonals are $L$-local. This is the companion paper to \cite{locinaninftytopos}.
\end{abstract}
\maketitle
\tableofcontents
\section{Introduction}
This paper complements the work of \cite{locinaninftytopos} by proving the following theorem, which is one of our main results in the theory of reflective subfibrations on an $\infty$-topos $\E$.
\begin{theoremstar}[\cref{thm:existoflprimeloc} \& \cref{cor:l'reflsubf}]
Let $L_\bullet$ be a reflective subfibration on an $\infty$-topos $\E$. Then, there exists a reflective subfibration $L'_\bullet$ on $\E$ for which the $L'$-local maps are exactly the $L$-separated maps.
\end{theoremstar}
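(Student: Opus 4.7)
The plan is to construct $L'_\bullet$ slicewise via a transfinite iterative construction that forces diagonals of maps into $B$ to become $L$-local, and then to verify that the resulting slicewise reflectors are compatible with base change so as to assemble into a reflective subfibration. The characterization of $L'$-local maps as $L$-separated maps will then follow from the construction of the reflector.

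As a preliminary step, I would verify that the class of $L$-separated maps has the closure properties expected of the local class of a reflective subfibration. Pullback-stability follows from the commutation of diagonals with pullback and the pullback-stability of $L$-local maps. Closure under composition uses the factorization $\Delta_{gf}\colon X \to X \times_Y X \to X \times_Z X$, whose first factor is $\Delta_f$ and whose second is a pullback of $\Delta_g$; hence both factors are $L$-local when $f$ and $g$ are $L$-separated, and so is their composite. Closure under limits in arrow categories of slices follows analogously.

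To construct the reflector on $\E/B$, given $f\colon X \to B$ I would build a tower $X = X^{(0)} \to X^{(1)} \to \cdots$ of maps over $B$, where at each successor step $X^{(n)}$ is modified by a pushout against the $L$-reflection of the current diagonal $\Delta_{f^{(n)}}$ (computed in the slice over $X^{(n)} \times_B X^{(n)}$) in order to force the next diagonal closer to being $L$-local, taking colimits at limit stages. The accessibility of $L_\bullet$, available from the companion paper for any reflective subfibration on a presentable $\infty$-topos, should ensure that for a sufficiently large regular cardinal the tower stabilizes and yields a genuinely $L$-separated map $X^{(\infty)} \to B$. The unit $X \to X^{(\infty)}$ is then universal among maps from $f$ to $L$-separated maps over $B$ by a stagewise argument using the universal property of each $L$-reflection used in the construction. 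To upgrade the slicewise reflectors to a reflective subfibration, I would check that every ingredient of this construction (pullback, pushout, $L$-reflection, diagonal formation, colimits) is preserved by base change $g^*$ along arbitrary $g\colon B' \to B$; the resulting equivalence $g^* L'_B(f) \simeq L'_{B'}(g^*f)$, combined with the general assembly criteria of \cite{locinaninftytopos}, gives the reflective subfibration.

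The main technical obstacle is the convergence of the transfinite iteration. At each finite stage only an approximation of $L$-separation is improved, and one must show that after transfinitely many stages the diagonal of the limit is actually $L$-local rather than merely a filtered colimit of approximations — and that this convergence occurs at a cardinal small enough to function uniformly in $B$, preserving pullback-stability. Setting up the cardinality bookkeeping against the accessibility of $L_\bullet$, and verifying that transfinite colimits of pushouts of $L$-units interact correctly with diagonal formation so that the limiting diagonal lies in the right class, is where the main technical work lies.
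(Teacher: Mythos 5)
Your strategy is genuinely different from the paper's, and it has gaps that I do not see how to repair at the stated level of generality. The most serious one is the appeal to ``the accessibility of $L_\bullet$'': a reflective subfibration is only required to furnish reflective subcategories $\D_X\subseteq\E_{/X}$ compatible with pullback (\cref{def:refsub}), and neither that definition nor the companion paper supplies accessibility of the reflectors $L_X$. What the companion paper does supply is that the $L$-local maps form a \emph{local class} (\cref{prop:llocalmapsarelocal}), which is a descent statement rather than an accessibility statement; it yields univalent classifying maps $u^L_\kappa\colon\ti{\U^L_\kappa}\to\U^L_\kappa$ for arbitrarily large $\kappa$. The paper's proof of \cref{thm:existoflprimeloc} is built entirely on this: it takes the $L$-reflection $\eta\colon\Delta X\to r$ of the diagonal, classifies $r$ by a map $\name{r}\colon X\times X\to\U$, and defines $X'$ as the image of the adjunct $X\to\U^X$ under the (effective epi, mono) factorization; $L$-separatedness of $X'$ comes from $L$-separatedness of $\U_L$ (\cref{lm:ULisLsep}), and the universal property is then extracted from the characterization \cref{thm:charoflprimeloc} (effective epi whose diagonal is an $L$-localization of $\Delta X$), whose proof in turn needs function extensionality, univalence, and the extension criterion \cref{prop:uniqueextalongfibers}. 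None of this requires, or can be replaced by, a cardinality bound on $L_X$.

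Even granting accessibility, the iteration itself is underdetermined and its convergence is the crux, not a technicality. ``Pushing out against the $L$-reflection of $\Delta_{f^{(n)}}$'' is not a localization at a fixed set of maps in $\E_{/B}$: the object being reflected lives over $X^{(n)}\times_B X^{(n)}$, which changes at every stage, and the diagonal of a (transfinite) colimit is not the colimit of the diagonals, so there is no reason the limiting diagonal is $L$-local rather than merely a colimit of $L$-local approximations; without accessibility, $\D_{X^{(\infty)}\times_B X^{(\infty)}}$ need not be closed under such colimits. You would also still need to establish the universal property from the tower, which is not automatic since the individual stages do not map to arbitrary $L$-separated targets. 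Two smaller points: closure of $L$-separated maps under composition is not among the axioms of a reflective subfibration and, as you argue it, it fails for general $L_\bullet$ --- the composite of two $L$-local maps is $L$-local only when $L_\bullet$ is a modality, which is exactly why \cref{cor:l'reflsubf} asserts that $L'_\bullet$ is a modality only when $L_\bullet$ is. Your remarks on pullback-stability are the one part that would go through, by universality of colimits and pullback-compatibility of $L$-reflections, but only once the construction itself is made to work.
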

\par

In \cite{locinaninftytopos}, we took from \cite{modinhott} the notion of \emph{reflective subfibration} on an $\infty$-topos $\E$, and developed the study of its properties. A reflective subfibration $L_\bullet$ on $\E$ is a pullback-compatible system of reflective subcategories $\D_{X}$ of $\E_{/X}$, for every $X\in\E$, with associated localization functor denoted by $L_X$. The collection of all objects in $\D_X$, as $X$ varies in $\E$, forms the class of $L$\emph{-local maps}. Reflective subfibrations provide a suitable framework for the study of localizations in an $\infty$-topos. Indeed, all the most common examples of localizations from classical homotopy theory can be recovered in this setting: stable factorization systems (\cite[Thm.~4.8]{locinaninftytopos}), left exact reflective subcategories of an $\infty$-topos (\cite[Prop.~4.11]{locinaninftytopos}), and localizations at sets of maps (\cite[Prop.~5.11]{locinaninftytopos}). For the reader's convenience, in \cref{sec:reflsub}, we briefly gather from \cite{locinaninftytopos} the main definitions and results about reflective subfibrations that we need here.\par

For a reflective subfibration $L_\bullet$ on $\E$, one can consider the $L$\emph{-separated maps}, that is, those maps in $\E$ whose diagonal is $L$-local. For example, for the reflective subfibration $L^n_\bullet$ having the $n$-truncated maps as local maps, the $L^n_\bullet$-separated maps are the $(n+1)$-truncated maps, and they are themselves the local maps for a reflective subfibration, $L^{n+1}_\bullet$. It turns out that this behavior is completely general, as shown by \cref{thm:existoflprimeloc} and \cref{cor:l'reflsubf}: for \emph{any} $L_\bullet$, there exists an $L'_\bullet$ such that the $L'$-local maps are the $L$-separated maps.\par

In this paper, we focus on the proof of this existence result, leaving the study of its consequences to \cite[\S 7]{locinaninftytopos}. To this end, one needs to carefully examine some connections between $L$-local and $L$-separated maps. We develop the study of these relationships in \cref{sec:relbtwnllocandlsep}. Our main result there is the following characterization of $L'$\emph{-localization maps}, that is, those maps out of a fixed object $X$ (or, more generally, out of a map $p$) and into an $L$-separated object, which are universal among maps with this property.
\newtheorem*{thm:charoflprimeloc}{\cref{thm:charoflprimeloc}}
\begin{thm:charoflprimeloc}
The following are equivalent, for a map $\eta'\colon X\rightarrow X'$ in $\E$:
\begin{enumerate}
\item $\eta'$ is an $L'$-localization of $X$;
\item $\eta'$ is an effective epimorphism and
$$
\bfig
\node x(0,0)[X]
\node xx(300,-300)[X\times X]
\node xx'x(600,0)[X\times_{X'}X]

\arrow|l|[x`xx;\Delta X]
\arrow|a|[x`xx'x;\Delta\eta']
\arrow||[xx'x`xx;]
\efig
$$
is an $L$-localization of $\Delta X$.
\end{enumerate}
\end{thm:charoflprimeloc}
\par

The existence result for $L'_\bullet$, together with a few auxiliary lemmas needed in its proof, is the content of \cref{sec:existofl'loc}. The results in both \cref{sec:relbtwnllocandlsep} and \cref{sec:existofl'loc} require some facts about locally cartesian closed $\infty$-categories that we collect in the Appendix (\cref{AppA}). Some of the results there are well known, but for others we could not find any reference in the literature. Examples of the results in the latter group are \cref{prop:funext}, where we prove the topos-theoretic version of the function extensionality axiom from HoTT, and \cref{prop:uniqueextalongfibers}, which provides a criterion for unique extensions of maps that is crucial for the proof of \cref{thm:charoflprimeloc}.\par

Our approach to localization is inspired by the work in homotopy type theory (HoTT) developed in \cite{locinhott}. The notion of $L$-separated map, as well as \cref{prop:extoflocalalongL'loc} and \cref{thm:existoflprimeloc}, are expressed in HoTT in \cite[\S 2.2-2.3]{locinhott}. We take from there the main ideas for the proofs of \cref{thm:charoflprimeloc} and \cref{thm:existoflprimeloc}. However, proof details and techniques have been modified, sometimes significatively, to apply to the ``term-free" exposition we work with. This is particularly evident in the proof of \cref{thm:charoflprimeloc}, and in the results of \cref{sec:existofl'loc}. All the proofs of the results in the Appendix are also specific to the higher-topos theoretic setting we work with. For a more detailed description of how our work relates to the study of localization in HoTT, we refer the reader to the Introduction of \cite{locinaninftytopos}.

\subsection*{Acknowledgements} We would like to thank Dan Christensen, for his support and guidance, and Mike Shulman, for the careful reading of the material present here, and for many helpful suggestions.

\subsection*{Notation and Conventions.} Notation and conventions from \cite{locinaninftytopos} carry over here as well. Furthermore, given an $\infty$-category $\C$, we often depict a map $m\colon p\rightarrow q$ in a slice category $\C_{/Z}$ as a commuting triangle in $\C$ of the form
$$
\bfig
\node e(0,0)[E]
\node m(500,0)[M]
\node z(250,-250)[Z]

\arrow|a|[e`m;m]
\arrow|l|[e`z;p]
\arrow|r|[m`z;q]
\efig
$$
leaving the interior $2$-simplex implicit. We will often carry over this implicitness to other maps in slice categories that are constructed from $m$, at least as long as the context is enough to disambiguate. For example, if the implicit $2$-simplex of $m$ above is $\sigma$, then $(\sigma,\sigma)$ is the implicit $2$-simplex of the map in $\C_{/Z^2}$ given by
$$
\bfig
\node e(0,0)[E]
\node m(500,0)[M]
\node z(250,-250)[Z^2]

\arrow|a|[e`m;m]
\arrow|l|[e`z;(p,p)]
\arrow|r|[m`z;(q,q)]
\efig
$$
\par
If $p$ and $q$ are objects in a slice category $\C_{/Z}$, we write $p\times^{Z}q$ to mean the product object of $p$ and $q$ in $\C_{/Z}$.
\section{Reflective Subfibrations}
\label{sec:reflsub}
We gather here some background material on reflective subfibrations in an $\infty$-topos $\E$ from the companion paper \cite{locinaninftytopos}.
\begin{definition}[{\cite[\S A.2]{modinhott}}]
\label{def:refsub}
Let $\E$ be an $\infty$-topos.
\begin{enumerate}
\item A \emph{reflective subfibration} $L_\bullet$ on $\E$ is the assignment, for each $X\in\E$, of an $\infty$-category $\D_X$ such that:
\begin{itemize}
\item[(a)] Each $\D_X$ is a reflective $\infty$-subcategory of $\E_{/X}$, with associated localization functor $L_X=\colon \E_{/X}\rightarrow\E_{/X}$. This is the composite of the reflector of $\E_{/X}$ into $\D_X$ and the inclusion of $\D_X$ into $\E_{/X}$. When $X=1$, we write $\D$ for $\D_1$ and $L$ for $L_1$. 
\item[(b)] For every map $f\colon X\rightarrow Y$ in $\E$, and any $p\in \E_{/Y}$, the induced map $L_X(f^\ast p)\rightarrow f^\ast(L_Y p)$ is an equivalence. In particular, the pullback functor $f^\ast\colon\E_{/Y}\rightarrow\E_{/X}$ restricts to a functor $\D_Y\rightarrow\D_X$ which we still denote by $f^\ast$.
\end{itemize}
\item A \emph{modality} on $\E$ is a reflective subfibration $L_{\bullet}$ on $\E$ which is composing, in that, whenever $p\colon X\rightarrow Y$ is in $\D_Y$ and $q\colon Y\rightarrow Z$ is in $\D_Z$, the composite $qp$ is in $\D_Z$.
\end{enumerate}  
\end{definition}
\begin{remark}
\label{rmk:localcharacterofreflsubf}
For every object $X\in\E$ and every map $f\colon Y\rightarrow X$, we have that $(\E_{/X})_{/f}\simeq \E_{/Y}$ (see \cite[Lemma~4.18]{joyalconjhott}). Therefore, for each $X\in\E$, a reflective subfibration $L_{\bullet}$ induces a reflective subfibration $L^{/X}_{\bullet}$ of $\E_{/X}$ by taking $\D^{/X}_{f}$ to be $\D_Y$. It follows that all the results we give below about reflective subfibrations on an $\infty$-topos also hold ``locally" in the $\infty$-topos $\E_{/X}$, for $X\in\E$.
\end{remark}
From now on, we fix a reflective subfibration $L_{\bullet}$ on our favorite $\infty$-topos $\E$. 

\begin{notation}\label{not:stuffaboutreflsub} We adopt the following notation for the rest of this work.

\begin{itemize}
\item A morphism $p\colon E\rightarrow X$ is called $L$-\emph{local} if, seen as an object of $\E_{/X}$, it is in $\D_X$. We call $E\in\E$ an $L$-\emph{local object} if $E\rightarrow 1$ is an $L$-local map. 
\item For $X\in\E$, $S_X$ denotes the class of all $L_X$-\emph{equivalences}, i.e., maps $\alpha$ in $\E_{/X}$ such that $L_X(\alpha)$ is an equivalence. Equivalently, $S_X=\prescript{\perp}{}{\D_X}$, where $\prescript{\perp}{}{\D_X}$ denotes the class of maps in $\E_{/X}$ which are left orthogonal to maps in $\D_X$. When it is clear that $\alpha$ is a map in $\E_{/X}$, we often drop the explicit reference to the object $X$, and just talk about $L$-\emph{equivalences}. 
\item Given $p\in\E_{/X}$, we write $\eta_X(p)\colon p\rightarrow L_X(p)$ for the reflection (or localization) map of $p$ into $\D_X$. Note that $\eta_X(p)\in S_X$. For $X\in\E$, we set $\eta(X):=\eta_1(X)$. 
\end{itemize}
\end{notation}

Given a map $f$ in $\E$, we denote by $\Sigma_f$ and by $\Pi_f$ the left and right adjoint to the pullback functor $f^\ast$, respectively.
\begin{lemma}
\label{lm:lequivandpullbdepsum}
Given $f\colon X\rightarrow Y$, we have:
\begin{itemize}
\item[(i)]$f^\ast(S_Y)\subseteq S_X$, that is, if $\alpha\colon p\rightarrow q$ is an $L_Y$-equivalence, then the induced map $f^\ast(p)\rightarrow f^\ast (q)$ on pullbacks is an $L_X$-equivalence;
\item[(ii)]$\Sigma_f(S_X)\subseteq S_Y$. 
\end{itemize}
\end{lemma}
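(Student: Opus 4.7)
The plan is to prove the two parts separately, with (i) flowing directly from clause (b) of \cref{def:refsub} and (ii) resting on the adjunction $\Sigma_f \dashv f^\ast$ together with the orthogonal characterization $S_X = {}^\perp \D_X$.

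For part (i), I would start from the natural equivalence $L_X(f^\ast p) \simeq f^\ast(L_Y p)$ postulated in clause (b) of \cref{def:refsub}. For any morphism $\alpha \colon p \to q$ in $\E_{/Y}$, naturality gives a commutative square identifying $L_X(f^\ast \alpha)$ with $f^\ast(L_Y \alpha)$ (up to equivalence). If $\alpha \in S_Y$, then $L_Y(\alpha)$ is an equivalence, and since the pullback functor $f^\ast$ preserves equivalences, $f^\ast(L_Y \alpha)$ is an equivalence as well. Thus $L_X(f^\ast \alpha)$ is an equivalence, which is precisely the statement that $f^\ast \alpha \in S_X$.

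For part (ii), I would use the characterization $S_Y = {}^\perp \D_Y$ noted in \cref{not:stuffaboutreflsub}. Fix $\alpha \in S_X$ and let $r \in \D_Y$ be an arbitrary $L$-local object of $\E_{/Y}$; it suffices to show $\Sigma_f \alpha \perp r$ in $\E_{/Y}$. By the adjunction $\Sigma_f \dashv f^\ast$, this is equivalent to $\alpha \perp f^\ast r$ in $\E_{/X}$. Clause (b) of \cref{def:refsub} guarantees that $f^\ast$ restricts to a functor $\D_Y \to \D_X$, so $f^\ast r \in \D_X$. Since $\alpha \in S_X = {}^\perp \D_X$, the orthogonality $\alpha \perp f^\ast r$ holds, and we are done.

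Neither step presents a real obstacle: the content of (i) is essentially the pullback-compatibility built into the definition, and the content of (ii) is a formal consequence of the adjunction together with the fact that $f^\ast$ preserves local objects. If anything, the only subtlety worth spelling out is the mild verification that the equivalence $L_X f^\ast \simeq f^\ast L_Y$ from (b) is natural in the $\E_{/Y}$-variable, so that it can be applied to the morphism $\alpha$ rather than to a single object; this is implicit in the ambient $\infty$-categorical framework recalled from \cite{locinaninftytopos}.
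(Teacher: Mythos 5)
Your proof is correct. Note that this paper states \cref{lm:lequivandpullbdepsum} without proof, importing it from the companion paper \cite{locinaninftytopos}; your two arguments --- naturality of the comparison equivalence $L_X f^\ast \simeq f^\ast L_Y$ from \cref{def:refsub}(1b) for part (i), and transposing the orthogonality $S_X = {}^{\perp}\D_X$ across $\Sigma_f \dashv f^\ast$ using that $f^\ast$ preserves local objects for part (ii) --- are the standard ones and exactly what is needed.
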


In \cite[\S 2]{locinaninftytopos}, we introduced the notion of local class of maps and of univalent classifying maps in $\E$, basing on \cite{htt} and \cite{univinloccarclosed}. Recall, in particular, that there are arbitrarily large regular cardinals such that there is a univalent map $u_\kappa\colon\ti{\U_\kappa}\rightarrow \U_\kappa$ classifying $\kappa$-compact maps in $\E$.
\begin{proposition}\cite[Prop.~3.12 \& Thm.~3.15]{locinaninftytopos}
\label{prop:llocalmapsarelocal}
The class $\M^L$ of all $L$-local maps is a  local class of maps of $\E$. In particular, there are abritrarily large regular cardinals $\kappa$ such the class of relatively $\kappa$-compact $L$-local maps admits  a classifying map
$u_\kappa^L\colon\ti{\U_\kappa^L}\rightarrow \U_\kappa^L$ which is univalent.
\end{proposition}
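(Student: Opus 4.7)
The plan is to prove this in two independent stages: first, that $\M^L$ is a local class of maps in $\E$; second, that because this local class is associated to an accessible reflective subfibration, it admits a univalent classifier for each sufficiently large cardinal $\kappa$.

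For the first stage, there are essentially two axioms to verify: stability under pullback and descent. Pullback stability is already built into \cref{def:refsub}(b): for any $f\colon X\to Y$ and $p\in\D_Y$, the pullback $f^\ast p$ lies in $\D_X$. The substantive content is the descent property, which asks that given a diagram $X_\bullet\colon K\to\E$ with colimit $X_\infty$, together with a cartesian natural transformation $p_\bullet\colon E_\bullet\Rightarrow X_\bullet$ whose components are all $L$-local, the colimit map $p_\infty\colon E_\infty\to X_\infty$ is again $L$-local. The cleanest route is via orthogonality: since $\D_X$ coincides with the right orthogonal $S_X^\perp$ in each slice, descent for $\M^L$ can be reduced to a compatible colimit-closure property of the classes $S_X$. \cref{lm:lequivandpullbdepsum} supplies the two basic closures of $S_X$ (under pullback $f^\ast$ and under $\Sigma_f$); combined with the universality and effectivity of colimits in the $\infty$-topos $\E$, these yield the required compatibility and hence descent for $\M^L$.

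For the second stage, once locality is established, the existence of a univalent classifier $u_\kappa^L$ for all sufficiently large regular cardinals $\kappa$ follows from the general classification theory of local classes of maps in an $\infty$-topos, as developed in HTT \S 6.1.6 and \cite{univinloccarclosed}. The required extra input is accessibility of $\M^L$: each $\D_X$ is an accessible reflective subcategory of $\E_{/X}$, so $L$-locality is a condition that interacts well with sufficiently filtered colimits. Choosing $\kappa$ large enough that $L$-locality is both preserved and reflected by $\kappa$-filtered colimits, one restricts the universal relatively $\kappa$-compact map $u_\kappa\colon\ti{\U_\kappa}\to\U_\kappa$ along the full subobject $\U_\kappa^L\hookrightarrow\U_\kappa$ that classifies $L$-local maps; this produces $u_\kappa^L\colon \ti{\U_\kappa^L}\to\U_\kappa^L$, and its univalence is inherited from that of $u_\kappa$ together with the fact that the inclusion $\U_\kappa^L\hookrightarrow\U_\kappa$ is a monomorphism.

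The main obstacle is the descent step: translating the local-in-slice pullback compatibility of the family $\{L_X\}_{X\in\E}$ into a global statement compatible with colimits along arbitrary diagrams. Careful bookkeeping is required because the reflections $L_X$ a priori live in different slice $\infty$-categories, and one must use the Beck--Chevalley-type identifications afforded by \cref{def:refsub}(b) to glue the local $L$-equivalences along a colimit cocone before invoking $\infty$-topos descent.
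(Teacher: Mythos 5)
A preliminary point: the paper does not prove this proposition at all — it is imported verbatim from the companion paper (\emph{loc.~cit.}, Prop.~3.12 and Thm.~3.15) as background in \cref{sec:reflsub}, so there is no internal proof to measure your argument against. Judged on its own terms, your two-stage architecture (first locality of $\M^L$, then the classifier via the general theory of bounded local classes) is the standard one and is surely the route the cited results take.

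The genuine gap is in the descent step, which you correctly single out as the substantive content but then do not carry out. The claim that pullback-stability of the $S_X$ and $\Sigma_f(S_X)\subseteq S_Y$, ``combined with universality and effectivity of colimits,'' yield descent is an assertion, not an argument: orthogonality reduces descent for $\D_\bullet$ to a statement about how the classes $S_{X_i}$ assemble over a colimit cone, and nothing in \cref{lm:lequivandpullbdepsum} addresses that assembly. The efficient argument bypasses orthogonality and uses the reflection maps directly: given a cartesian transformation $p_\bullet\Rightarrow X_\bullet$ with $L$-local components over a colimit cone $X_i\to X_\infty$, form $\eta_{X_\infty}(p_\infty)$; by \cref{def:refsub}(1b) its pullback along each $X_i\to X_\infty$ is the reflection map of $p_i$, hence an equivalence since $p_i$ is already local; since $\coprod_i X_i\to X_\infty$ is an effective epimorphism and equivalences form a local class, $\eta_{X_\infty}(p_\infty)$ is an equivalence, so $p_\infty$ is $L$-local. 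Your second stage is essentially correct: once locality is known, the classification theorem for bounded local classes produces $u^L_\kappa$ for arbitrarily large regular $\kappa$ (accessibility of the $\D_X$ is not an additional input beyond locality), and univalence of $u^L_\kappa$ does follow from that of $u_\kappa$ because $i\colon\U^L_\kappa\to\U_\kappa$ is a monomorphism — one has $\Delta(\U^L_\kappa)=(i\times i)^\ast\Delta(\U_\kappa)\simeq(i\times i)^\ast\Eq(u_\kappa)\simeq\Eq(u^L_\kappa)$ — which is precisely the mono $\iota$ that the present paper exploits in the proof of \cref{thm:existoflprimeloc}.
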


\begin{definition}
\label{def:lconnmaps}
$f\in\E_{/X}$ is said to be an $L$-\emph{connected map (in} $\E$\emph{)} if $L_X(f)\simeq \id_X$. Equivalently, $f$ is $L$-connected if 
$$(f\stackrel{\eta_X(f)}{\longrightarrow}L_X(f))\simeq (f\stackrel{f}{\rightarrow}\id_X)$$
in the arrow category of $\E_{/X}$, where the equivalence is given by $\id_f$ and $L_X(f)\rightarrow~\id_X$. We sometimes refer to this fact by saying that an $L$-connected map $f$ is \emph{its own reflection map}. 
\end{definition}

In particular, an $L$-connected map $f\colon E\rightarrow X$ is an $L_X$-equivalence when seen as a map $f\colon f\rightarrow \id_X$ in $\E_{/X}$.

\begin{remark}
\label{rmk:lconnmapsclosedunderpullbacks}
By taking the reflection of $f\in\E_{/X}$ into $\D_X$ and using stability under pullbacks of reflection maps (see \cref{def:refsub} (1a)), it follows that $L$-connected maps are stable under pullbacks along arbitrary maps.
\end{remark}
We now recall the definition of an $L$-separated map, which is the core notion in this paper.
\begin{definition}
\label{def:lseparatedmaps}
A map $p\colon E\rightarrow X$ in $\E$ is called $L$-\emph{separated} or $L'$-\emph{local} if the object $\Delta p\in\E_{/E\times_{X}E}$ is in $\D_{E\times_{X}E}$, i.e., if $\Delta p$ is an $L$-local map.
\end{definition}
\begin{proposition}[{\cite[Prop.~6.5 \& Prop.~6.7]{locinaninftytopos}}]
\label{prop:closureoflsepmapsunderpllbckdepprod}
Let $L_\bullet$ be a reflective subfibration on an $\infty$-topos $\E$. Then the following hold.
\begin{enumerate}
\item Let $f\colon Y\rightarrow X$ be a map in $\E$, and let $p\colon E\rightarrow X$ and $q\colon M\rightarrow~Y$ be $L$-separated maps. Then $f^{\ast}(p)\in\E_{/Y}$ and $\prod_f q\in\E_{/X}$ are $L$-separated. Furthermore, the internal hom $p^f$ is $L$-separated.
\vspace*{0.1cm}
\item The class $\M'$ of all $L$-separated maps is a local class of maps.
\end{enumerate}

\end{proposition}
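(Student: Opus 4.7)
My plan is to reduce both parts of the proposition to the corresponding, already established, properties of $L$-local maps, by exploiting that the relevant adjoints preserve the formation of diagonals in slice categories.

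For part (1), pullback stability follows from the observation that $f^{\ast}\colon \E_{/X}\to \E_{/Y}$ is a right adjoint (to $\Sigma_f$), hence preserves limits: there is a canonical equivalence $(f^{\ast}E)\times_Y (f^{\ast}E)\simeq f^{\ast}(E\times_X E)$ under which $\Delta(f^{\ast}p)$ is identified with $f^{\ast}(\Delta p)$, and the latter is $L$-local by \cref{def:refsub}(1b). For stability under $\prod_f$, a similar limit-preservation argument for $\prod_f\colon \E_{/Y}\to \E_{/X}$ (as a right adjoint to $f^{\ast}$) gives $\Delta(\prod_f q)\simeq \prod_f(\Delta q)$ after identifying $\prod_f q \times^X \prod_f q \simeq \prod_f(q\times^Y q)$. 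I therefore need the auxiliary fact that the class of $L$-local maps is closed under $\prod_f$. This follows from an adjunction/orthogonality argument: $m\in\E_{/Y}$ is $L$-local iff $\Map_{\E_{/Y}}(-, m)$ sends $L_Y$-equivalences to equivalences; by the $f^{\ast}\adjoint \prod_f$ adjunction, $\Map_{\E_{/X}}(-, \prod_f m)\simeq \Map_{\E_{/Y}}(f^{\ast}(-), m)$, and $f^{\ast}$ sends $L_X$-equivalences to $L_Y$-equivalences by \cref{lm:lequivandpullbdepsum}(i). The internal hom case is then immediate: in $\E_{/X}$ we have $p^f \simeq \prod_f(f^{\ast}p)$, so $L$-separatedness transfers through pullback and then through dependent product.

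For part (2), pullback closure of $\M'$ is exactly the first statement of (1). For the ``local character'' part, suppose $f\colon A\to B$ is such that $g^{\ast}f\in \M'$ for some effective epimorphism $g\colon C\to B$. By the same diagonal identification as in the pullback case, $\Delta(g^{\ast}f)\simeq \pi^{\ast}(\Delta f)$, where $\pi\colon C\times_B(A\times_B A)\to A\times_B A$ is the pullback of $g$ along the canonical projection $A\times_B A\to B$. Effective epimorphisms are pullback-stable in an $\infty$-topos, so $\pi$ is itself an effective epimorphism; therefore $\pi^{\ast}(\Delta f)$ is $L$-local, and by local character of $\M^L$ (\cref{prop:llocalmapsarelocal}), $\Delta f$ is $L$-local, i.e., $f\in\M'$.

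The main obstacle I foresee is the careful bookkeeping of the equivalences $\Delta(f^{\ast}p)\simeq f^{\ast}(\Delta p)$ and $\Delta(\prod_f q)\simeq \prod_f(\Delta q)$ in the $\infty$-categorical setting: morally these are clear from the adjunctions, but verifying rigorously that the canonical equivalences between the iterated pullback/product objects actually transport the diagonal on one side to the image of the diagonal on the other likely requires some of the locally cartesian closed machinery developed in the Appendix.
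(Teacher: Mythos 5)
This proposition is imported from the companion paper \cite{locinaninftytopos} (Prop.~6.5 \& 6.7); the present paper gives no proof of it, so your argument can only be measured against the route the paper signals elsewhere --- notably \cref{rmk:localnatureoffunext}, which indicates that the diagonals of $p^f$ and of $\prod_f q$ are meant to be computed via dependent function extensionality (\cref{prop:depfunext}). Your overall strategy of transporting diagonals through right adjoints and reducing to closure properties of $L$-local maps is sound: the pullback-stability step, the reduction $p^f\simeq\prod_f(f^\ast p)$, and the effective-epi descent argument in part (2) are all correct.

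Two points need shoring up. First, in the $\prod_f$ step the auxiliary fact you actually establish is the absolute statement $\prod_f(\D_Y)\subseteq\D_X$, but after identifying $\Delta(\prod_f q)$ with the image of the morphism $\Delta q\colon q\to q\times^Y q$ under $\prod_f$, what you need is that the induced functor $(\E_{/Y})_{/q\times^Y q}\to(\E_{/X})_{/\prod_f(q\times^Y q)}$, i.e.\ $\E_{/M\times_Y M}\to\E_{/P\times_X P}$ where $P=\dom(\prod_f q)$, carries $\D_{M\times_Y M}$ into $\D_{P\times_X P}$. This functor is \emph{not} itself a dependent product along a map of $\E$, so your orthogonality argument does not apply verbatim; it does go through once you use \cref{lm:adjonslice} to identify its left adjoint as a pullback followed by a $\Sigma$ along the counit, both of which preserve $L$-equivalences by \cref{lm:lequivandpullbdepsum}. (The intended route via \cref{prop:depfunext} sidesteps this by exhibiting $\Delta(\prod_f q)$ as a genuine dependent product, along a projection, of a pullback of $\Delta q$, for which the absolute closure statement suffices.) Second, in part (2) you verify only pullback stability and descent along effective epimorphisms, whereas a local class must also satisfy the coproduct condition (the classifying presheaf must preserve all small limits, not only \v{C}ech descent). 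This is easy here: disjointness and universality of coproducts give $\Delta(\coprod_i f_i)\simeq\coprod_i\Delta(f_i)$, and $\M^L$ is closed under coproducts because it is local --- but it should be stated. With these two repairs your proof is complete and constitutes a legitimate alternative to the function-extensionality argument.
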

\section{Interactions between $L$-local and $L$-separated maps}\label{sec:relbtwnllocandlsep}
We study here some relationships between $L$-local and $L$-separated maps and prove a characterization result for $L'$-localization maps which will be used in the next section as a fundamental step for the proof of \cref{thm:existoflprimeloc}.
\begin{lemma}[{\cite[Lemma 2.21]{locinhott}}]
\label{lm:totalspaceofllocoverlsepislloc}
Suppose given a commutative triangle
$$
\bfig
\Vtriangle|alr|<300,250>[E`M`X;\alpha`p`q]
\efig
$$
in which $\Delta q\in\D_{M\times_X M}$ and $\alpha\in\D_M$, that is, $q$ is $L$-separated and $\alpha$ is $L$-local. Then $\Delta p$ is in $\D_{E\times_X E}$, i.e., $p$ is $L$-separated.
\end{lemma}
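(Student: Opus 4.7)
The plan is to establish $\Delta p \in \D_{E \times_X E}$ by verifying directly that $\Delta p$ is right-orthogonal to every $L_{E \times_X E}$-equivalence $s$ in $\E_{/E \times_X E}$. Given a lifting square with verticals $s$ and $\Delta p$, top edge $\phi : P \to E$, and bottom edge $\mu : Q \to E \times_X E$, I will construct the unique fill-in $\nu : Q \to E$ in two stages, each using one of the two hypotheses in turn.

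In Stage 1, I post-compose the square with $(\alpha, \alpha) : E \times_X E \to M \times_X M$, turning it into a lifting problem against $\Delta q \in \D_{M \times_X M}$. Since $\Sigma_{(\alpha,\alpha)}(s)$ is an $L_{M \times_X M}$-equivalence by \cref{lm:lequivandpullbdepsum}(ii), orthogonality yields a unique $\tau : Q \to M$ satisfying $\tau s = \alpha \phi$ and $\Delta q \circ \tau = (\alpha,\alpha) \circ \mu$. Unpacking this last equality componentwise identifies $\tau$ with both $\alpha \pi_1 \mu$ and $\alpha \pi_2 \mu$, where $\pi_i$ denote the two projections $E \times_X E \to E$, and forces the ``third coordinate'' of $\mu$ to be the canonical identity witness at $q\tau$.

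In Stage 2, I re-equip $P$ and $Q$ with structure maps $\alpha \phi$ and $\tau$ to $M$; then $\Sigma_{\alpha \pi_1}(s)$ is an $L_M$-equivalence (again by \cref{lm:lequivandpullbdepsum}(ii)), and orthogonality against $\alpha \in \D_M$ produces a unique $\nu : Q \to E$ with $\alpha \nu = \tau$ and $\nu s = \phi$. The verification $\Delta p \circ \nu = \mu$ then reduces to observing that $\pi_1 \mu$ and $\nu$ are both lifts of $\tau$ along $\alpha$ that agree with $\phi$ along $s$, so they coincide by the uniqueness clause in Stage 2; the same argument gives $\pi_2 \mu = \nu$, and the third coordinate is handled by the identification made in Stage 1. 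Uniqueness of $\nu$ as a lift of the original square follows because any $\nu'$ with $\Delta p \circ \nu' = \mu$ must satisfy $\nu' = \pi_1 \mu$ after applying the splitting $\pi_1$ of $\Delta p$.

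The main obstacle will be the bookkeeping: at each stage $s$ must be reinterpreted with a new structure map (over $M \times_X M$ in Stage 1, over $M$ in Stage 2), so one has to verify that the $\Sigma_f$-pushed versions of $s$ really do live in the slices where the orthogonalities are being applied, and that the ``composite'' structure maps on $P, Q$ obtained by post-composing with $(\alpha,\alpha)$ or $\alpha \pi_1$ indeed coincide with $\alpha\phi$, $\tau$, etc. The componentwise unpacking of $\Delta q \circ \tau = (\alpha,\alpha) \circ \mu$ in the $\infty$-categorical setting will also require some care with the implicit $2$-simplices attached to each map in the triangle, in the sense introduced in the Notation section.
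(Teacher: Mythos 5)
Your route is genuinely different from the paper's, and noticeably heavier. The paper's proof is three lines: it factors the map $(\id_E,\alpha)\colon E\to E\times_X M$ as $(\id_E\times_X\alpha)\circ\Delta p$, observes that $\id_E\times_X\alpha$ is a pullback of $\alpha$ and that $(\id_E,\alpha)$ is a pullback of $\Delta q$ (hence both are $L$-local over $E\times_X M$), and then invokes the left-cancellation property of $L$-local maps (\cite[Prop.~3.7]{locinaninftytopos}: if $f$ and $f\circ g$ are $L$-local, so is $g$). All coherence is thus delegated to two already-established closure properties. You instead unwind the orthogonality $\Delta p\in S_{E\times_X E}^{\perp}$ by hand, which is legitimate in principle (for a reflective subcategory, $S$-local objects are exactly the objects of $\D_{E\times_X E}$, up to equivalence), and your two stages correctly deploy \cref{lm:lequivandpullbdepsum}(ii) to transport $s$ into $S_{M\times_X M}$ and $S_M$ so that $\Delta q$ and $\alpha$ can each be used once. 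What the paper's approach buys is precisely that it never has to open up the lifting squares.

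The place where your proposal does not yet close is the step ``$\Delta p\circ\nu=\mu$.'' Identifying $\pi_1\mu\simeq\nu\simeq\pi_2\mu$ via the Stage~2 uniqueness is fine, but a map $Q\to E\times_X E$ is the data of two maps to $E$ \emph{together with} a homotopy between their composites with $p$, and you must show that under these identifications the homotopy carried by $\mu$ becomes the constant one carried by $\Delta p\circ\nu$. Your appeal to Stage~1 only controls the image of that homotopy after applying $q$ to $M\times_X M$; to finish, you must check that the Stage~1 identification $\Delta q\tau\simeq(\alpha\times_X\alpha)\mu$ and the Stage~2 identifications $\pi_i\mu\simeq\nu$ (which live over $M$) are compatible, and, more fundamentally, all of your ``unique lift'' statements must be upgraded to contractibility of the relevant spaces of lifts, since an equivalence of mapping spaces is not established by existence-and-uniqueness on $\pi_0$ alone. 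None of this is fatal, but it is exactly the bookkeeping the paper's factorization through $E\times_X M$ is designed to avoid; if you want to keep your outline, the cleanest repair is to recognize your two stages as instances of pullback-stability plus left-cancellation and cite those closure properties instead of re-proving them inside a lifting square.
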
 
\begin{proof}
The map
$
(\id_E\times_X\alpha\colon E\times_X E\rightarrow E\times _X M)=(E\times_X M\rightarrow M)^{\ast}(\alpha)
$ 
is in $\D_{E\times_X M}$, since $\alpha\colon E\rightarrow M$ is in $\D_M$. Similarly, the map
$
((\id_E,\alpha)\colon E\rightarrow E\times_X M)=(\alpha\times _X \id_M)^\ast (\Delta q)
$
is in $\D_{E\times_X M}$. But $(\id_E\times_X\alpha)\circ \Delta p = (\id_E,\alpha),$ so $\Delta p$ is $L$-local, by \cite[Prop.~3.7]{locinaninftytopos}: if both $f$ and $f\circ g$ are $L-$local maps, then so is $g$.
\end{proof}

\begin{definition}
\label{def:l'locmap}
A map $\alpha\colon p\rightarrow p'$ in $\E_{/X}$ is called an $L'$-\emph{localization map} of $p$ if $p'$ is $L$-separated and 
$
\E_{/X}(\alpha,q)\colon\E_{/X}(p',q)\rightarrow \E_{/X}(p,q)
$
is an equivalence of $\infty$-groupoids for every $L$-separated $q\in\E_{/X}$. In other words, for every map $\beta\colon p\rightarrow q$, there is a unique $\psi\colon p'\rightarrow q$ with $\psi\circ\alpha =\beta$.
\end{definition}
\begin{remark}
Given an $L-$separated $r\in\E_{/X}$ and any $t\in\E_{/X}$, $r^t\in\E_{/X}$ is again $L$-separated. It follows that, for a map $\alpha\colon p\rightarrow p'$ in $\E_{/X}$ with $p'$ $L$-separated, the above definition can be rephrased internally, by asking that $q^\alpha$ is an equivalence in $\E_{/X}$ for every $L$-separated map $q\colon Y\rightarrow X$.
\end{remark}

\begin{lemma}[{\cite[Prop.~ 2.30]{locinhott}}]
\label{lm:etaprimelconn}
Let $\eta '\colon p\rightarrow p'$ in $\E_{/Y}$ be an $L'$-localization of $p\in\E_{/Y}$, with $\eta '\colon X\rightarrow~X'$ as a map in $\E$. Then $\eta'$ is an $L$-connected map (\cref{def:lconnmaps}).
\end{lemma}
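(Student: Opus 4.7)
The plan is to reflect $\eta'$, viewed as an object of $\E_{/X'}$, into $\D_{X'}$, and then show that the resulting reflection map admits a two-sided inverse. Write $r\colon W\to X'$ for (a representative of) $L_{X'}(\eta')$, and let $\eta\colon X\to W$ denote the reflection map, so $r$ is $L$-local and $r\circ\eta=\eta'$. I will construct $s\colon X'\to W$ with $r\circ s=\id_{X'}$ and $s\circ r=\id_W$, which gives $L_{X'}(\eta')\simeq\id_{X'}$ and concludes $\eta'$ is $L$-connected.

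The critical step is an application of \cref{lm:totalspaceofllocoverlsepislloc} to the commutative triangle $W\xrightarrow{r}X'\xrightarrow{p'}Y$: since $p'$ is $L$-separated (by virtue of $p'$ being $L'$-local in $\E_{/Y}$) and $r$ is $L$-local, the composite $p'\circ r$ is $L$-separated in $\E_{/Y}$. Thus $\eta\colon X\to W$ can be regarded as a morphism $p\to p'\circ r$ in $\E_{/Y}$, because $p'\circ r\circ\eta=p'\circ\eta'=p$. Invoking the universal property of $\eta'$ as an $L'$-localization (\cref{def:l'locmap}) against the $L$-separated target $p'\circ r$, I obtain a unique $s\colon X'\to W$, viewed as a morphism $p'\to p'\circ r$ in $\E_{/Y}$, with $s\circ\eta'=\eta$.

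The final step is two uniqueness arguments. First, both $r\circ s$ and $\id_{X'}$ are self-maps of $p'$ in $\E_{/Y}$ that yield $\eta'$ when precomposed with $\eta'$; uniqueness in the $L'$-localization property of $\eta'$ applied to the $L$-separated target $p'$ forces $r\circ s=\id_{X'}$. Once this is known, $s\circ r$ is a valid self-map of $r$ in $\E_{/X'}$, and both $s\circ r$ and $\id_W$ yield $\eta$ when precomposed with $\eta$; uniqueness in the $L_{X'}$-reflection property of $\eta$ applied to the $L$-local target $r$ then forces $s\circ r=\id_W$. Hence $r$ is an equivalence, as desired.

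The main obstacle I foresee is the bookkeeping between the two slices $\E_{/X'}$ and $\E_{/Y}$: the reflection $L_{X'}(\eta')$ naturally lives in $\E_{/X'}$, whereas the universal property that defines $\eta'$ as an $L'$-localization is a statement in $\E_{/Y}$. \cref{lm:totalspaceofllocoverlsepislloc} is precisely the bridge that transfers the $L$-local map $r$ over $X'$ into an $L$-separated object over $Y$, allowing the two universal properties to cooperate; verifying the requisite triangle commutativities in each slice is largely routine once this bridge is in place.
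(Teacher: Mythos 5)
Your proposal is correct and follows the paper's proof essentially verbatim: reflect $\eta'$ into $\D_{X'}$, use \cref{lm:totalspaceofllocoverlsepislloc} to see that the composite over $Y$ is $L$-separated, extract the comparison map $s$ from the universal property of $\eta'$, and conclude with the same two uniqueness arguments (your $r$, $s$, $\eta$ are the paper's $L_{X'}(\eta')$, $q$, $\eta_{X'}(\eta')$). No gaps.
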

\begin{proof}
Let $\eta_{X'}(\eta ')\colon \eta'\rightarrow L_{X'}(\eta')$ be the reflection map of $\eta'\in\E_{/X'}$ into $\D_{X'}$. Set $r:= p'\circ L_{X'}(\eta')$, and consider $\eta_{X'}(\eta ')\colon p\rightarrow r$ and $L_{X'}(\eta')\colon r\rightarrow p'$ as maps in $\E_{/Y}$. By \cref{lm:totalspaceofllocoverlsepislloc} applied to $L_{X'}(\eta')$, $r$ is $L$-separated. Hence, there is a unique $q\colon p'\rightarrow~r$ with $q\eta' =\eta_{X'}(\eta ')$ as maps $p\rightarrow r$ in $\E_{/Y}$. Since $L_{X'}(\eta')q\eta'=L_{X'}(\eta')\eta_{X'}(\eta ')=\eta',$ 
the universal property of $\eta'$ gives $L_{X'}(\eta')q=\id_{p'}$. Thus, we can consider $qL_{X'}(\eta')$ as a map $L_{X'}(\eta')\rightarrow L_{X'}(\eta')$ in $\E_{/X'}$ and
$qL_{X'}(\eta')\eta_{X'}(\eta ')=q\eta'=\eta_{X'}(\eta '),$
so that $qL_{X'}(\eta')=\id_r$. Hence, $\eta'$ is $L$-connected.
\end{proof}
\begin{lemma}
\label{lm:ULisLsep}
Let $\kappa$ be a regular cardinal such that the class of relatively $\kappa$-compact $L$-local maps has a classifying map $u^L_\kappa\colon \ti{\U^L_\kappa}\rightarrow \U^L_\kappa$. Then $\U^{L}_\kappa$ is $L$-separated.
\end{lemma}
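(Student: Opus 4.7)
The plan is to use univalence of the classifier $u^L_\kappa$ to identify the diagonal $\Delta\colon\U^L_\kappa\to \U^L_\kappa\times\U^L_\kappa$, viewed as an object of $\E_{/\U^L_\kappa\times\U^L_\kappa}$, with the object of equivalences between the two tautological $L$-local families there, and then to show the latter is $L$-local by a closure-under-finite-limits argument in the slice. Set $Z := \U^L_\kappa\times\U^L_\kappa$, with projections $p_1, p_2\colon Z\to \U^L_\kappa$, and put $E_i := p_i^{\ast}u^L_\kappa\in\E_{/Z}$ for $i=1,2$; each $E_i$ lies in $\D_Z$ by pullback-stability of $L$-local maps (\cref{def:refsub}). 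Univalence of $u^L_\kappa$ (\cref{prop:llocalmapsarelocal}) produces an equivalence $\Delta\simeq\Eq_Z(E_1,E_2)$ of objects of $\E_{/Z}$, so the claim reduces to showing $\Eq_Z(E_1,E_2)\in\D_Z$.

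I would then pass to the slice reflective subfibration $L^{/Z}_\bullet$ on $\E_{/Z}$ (\cref{rmk:localcharacterofreflsubf}), under which $E_1$ and $E_2$ become $L^{/Z}$-local objects, and use the bi-invertibility presentation of $\Eq_Z(E_1, E_2)$ as the pullback in $\E_{/Z}$ of the composition map
$$(E_2)^{E_1}\times^Z (E_1)^{E_2}\times^Z (E_1)^{E_2} \longrightarrow (E_1)^{E_1}\times^Z (E_2)^{E_2}, \qquad (f,g,h)\mapsto (gf, fh),$$
along the map of identities $(\name{\id_{E_1}},\name{\id_{E_2}})\colon \id_Z \to (E_1)^{E_1}\times^Z (E_2)^{E_2}$. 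The key closure property needed is that in any $\infty$-topos with reflective subfibration, an internal hom $q^p$ with $L$-local codomain $q$ is itself $L$-local: this follows from the hom-product adjunction together with \cref{lm:lequivandpullbdepsum}(i)-(ii), which show that $-\times^Z p$ sends $L$-equivalences to $L$-equivalences. Combined with the fact that $\D_Z\hookrightarrow\E_{/Z}$ is a reflective subcategory and hence closed under all limits in $\E_{/Z}$, this puts each $(E_j)^{E_i}$, their $\E_{/Z}$-products, and the resulting pullback all in $\D_Z$.

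The main delicate point I foresee is the univalence-based identification $\Delta\simeq\Eq_Z(E_1,E_2)$: it amounts to saying that the reflexivity map $\U^L_\kappa\to \Eq_Z(E_1,E_2)$ picking out $(A,A,\id_A)$ is an equivalence, which is precisely the content of univalence for $u^L_\kappa$ in its topos-theoretic form. Once this is granted, the remainder of the argument is a formal manipulation with closure properties of reflective subfibrations under pullback and internal hom; these, together with the realization of bi-invertibility as a genuine finite limit, should be available from the Appendix's development of locally cartesian closed $\infty$-categories.
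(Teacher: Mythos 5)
Your proof is correct and follows essentially the same route as the paper's: both use univalence to identify $\Delta(\U^L_\kappa)$ with the object of equivalences between the two pulled-back tautological families over $\U^L_\kappa\times\U^L_\kappa$, and then conclude because that object is a pullback of a cospan of $L$-local objects, hence $L$-local. The only difference is that the paper delegates the cospan presentation of the object of equivalences (and the locality of the internal homs involved) to a lemma of the companion paper, whereas you reconstruct it explicitly via the bi-invertibility pullback and the hom-product adjunction.
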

\begin{proof}
We drop $\kappa$ from our notation. Since $u^L$ is univalent (\cref{prop:llocalmapsarelocal}), we have an equivalence
$
\Delta (\U^L)\simeq \Eq_{/ \U^L}(u^L)
$
over $\U^L\times\U^L$. By definition, $\Eq_{/ \U^L}(u^L)$ is the object of equivalences in $\E_{/\U^L\times \U^L}$ between $\id_{\U^L}\times u^L$ and $u^L\times \id_{\U^L}$, both of which are $L$-local since $u^L$ is. By \cite[Lemma 2.8]{locinaninftytopos}, such an object of equivalences is then the pullback of a cospan of objects in $\D_{\U^L\times\U^L}$ and it is therefore in $\D_{\U^L\times\U^L}$.   
\end{proof}
\begin{proposition}
\label{prop:extoflocalalongL'loc}
Let $X\in\E$ and let $\eta'\colon X\rightarrow X'$ be an $L'$-localization of $X$. Then a map $p\colon E\rightarrow X$ is $L$-local if and only if there is a pullback square in $\E$
$$
\bfig
\square<600,250>[E`L_{X'}E`X`X';\eta_{X'}(\eta'p)`p`L_{X'}(\eta'p)`\eta']
\efig
$$
\end{proposition}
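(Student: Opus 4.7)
The plan is to prove the two directions separately.

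One direction is immediate: if the displayed square is a pullback, then $p$ is the pullback of the $L$-local map $L_{X'}(\eta'p)$ along $\eta'$, hence $p\in\D_X$ by stability of $L$-local maps under pullback (\cref{def:refsub}(1b)).

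For the converse, suppose $p\in\D_X$, and choose a regular cardinal $\kappa$ large enough that $p$ is relatively $\kappa$-compact, so that $p$ is classified by a name $\name{p}\colon X\rightarrow \U^L_\kappa$ via \cref{prop:llocalmapsarelocal}. By \cref{lm:ULisLsep}, $\U^L_\kappa$ is $L$-separated, and since $\eta'$ is an $L'$-localization of $X$, the map $\name{p}$ factors essentially uniquely through $\eta'$ as $\name{p}\simeq g\circ\eta'$ for some $g\colon X'\rightarrow\U^L_\kappa$. The name $g$ classifies an $L$-local map $q\colon Q\rightarrow X'$, and the factorization yields a pullback square
$$
\bfig
\square<600,250>[E`Q`X`X';\tilde{\eta}'`p`q`\eta']
\efig
$$
exhibiting $p$ as $(\eta')^{\ast}(q)$. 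It remains to identify $q$ with $L_{X'}(\eta'p)$ so that this becomes the square in the statement.

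For the identification, observe that $\tilde{\eta}'$ is the pullback of $\eta'$ along $q$. Since $\eta'$ is $L$-connected by \cref{lm:etaprimelconn}, $\tilde{\eta}'$ is $L$-connected too by \cref{rmk:lconnmapsclosedunderpullbacks}; viewing it as a morphism $\tilde{\eta}'\rightarrow\id_Q$ in $\E_{/Q}$, it is therefore an $L_Q$-equivalence. Applying $\Sigma_q$ and using \cref{lm:lequivandpullbdepsum}(ii), the morphism $\tilde{\eta}'\colon\eta'p\rightarrow q$ in $\E_{/X'}$ is an $L_{X'}$-equivalence. Combined with $q$ being $L$-local, this means $\tilde{\eta}'$ exhibits $q$ as the $L_{X'}$-reflection of $\eta'p$, so $q\simeq L_{X'}(\eta'p)$ canonically, and $\tilde{\eta}'$ becomes $\eta_{X'}(\eta'p)$ under this equivalence. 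The main subtle step is this final identification; the rest of the argument is carried by the universal property of the classifier $\U^L_\kappa$ together with \cref{lm:ULisLsep} and the $L'$-localization property of $\eta'$.
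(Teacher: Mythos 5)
Your proof is correct and follows essentially the same route as the paper's: classify $p$ by a name into $\U^L_\kappa$, use \cref{lm:ULisLsep} and the universal property of $\eta'$ to factor the name through $X'$, pull back to get the candidate square, and then identify the top map as the $L_{X'}$-reflection of $\eta'p$ via $L$-connectedness of $\eta'$ and \cref{lm:lequivandpullbdepsum}(ii). The only cosmetic difference is that you spell out the easy direction, which the paper leaves implicit.
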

\begin{proof}
For the non-trivial implication, assume $p$ is $L$-local. Let $\kappa$ be a regular cardinal such that $p$ is relatively $\kappa$-compact and the class of relatively $\kappa$-compact $L$-local maps has a classifying map $u^L\colon\ti{\U^L_\kappa}\rightarrow \U^L_\kappa$. Let $P\colon X\rightarrow \U^L_\kappa$ be such that we have a pullback square
\begin{equation}
\tag{$\dagger$}
\bfig
\square<600,400>[E`\ti{\U^L_\kappa}`X`\U^L_\kappa;`p`u^L`P]
\place(70,330)[\angle]
\efig
\end{equation}
Since $\U^L_\kappa$ is $L$-separated, there is a unique map $P'\colon X'\rightarrow \U^L_\kappa$ with $P = P'\eta'$. Let $p'\colon E'\rightarrow X'$ be the pullback map in  
$$
\bfig
\square<600,250>[E'`\ti{\U^L_\kappa}`X'`\U^L_\kappa;`p'`u^L`P']
\place(70,180)[\angle]
\efig
$$
By definition of $P'$, $\eta'\colon X\rightarrow X'$ induces a map $n\colon E\rightarrow E'$ such that the composite square in
\begin{equation}
\tag{$\ddagger$}
\bfig
\hSquares(0,0)|aallrbb|%
<250>[E`E'`\ti{\U^L_\kappa}`X`X'`\U^L_\kappa;n``p`p'`u^L`\eta'`P']
\place(600,180)[\angle]
\efig
\end{equation}
is the square $(\dagger)$. It follows that the left square in $(\ddagger)$ is also a pullback. Thanks to \cref{lm:etaprimelconn}, $\eta'$ is $L$-connected. Thus, so is $n$, by \cref{rmk:lconnmapsclosedunderpullbacks}. In particular, $n$ is an $L$-equivalence (i.e., $n\colon n\rightarrow \id_{E'}$ is in $S_{E'}$). By composing domain and codomain of $n\colon n\rightarrow \id_E$ with $p'$, \cref{lm:lequivandpullbdepsum} (ii) gives that $n\colon \eta'p\rightarrow p'$ is an $L$-equivalence. Since $p'$ is $L$-local, it follows that $n$ is the $L$-localization map of $\eta'p$, as required.
\end{proof}
\begin{remark}
As explained in \cref{rmk:localcharacterofreflsubf}, \cref{prop:extoflocalalongL'loc} is also true ``locally'', i.e., when we take our ground $\infty$-topos to be $\E_{/X}$ instead of $\E$. For the result above, this means specifically that, if  
$$
\bfig
\Vtriangle|alr|<400,250>[E`E'`X;\eta'_{X}(p)`p`p']
\efig
$$
is an $L'$-localization of $p$ in $\E_{/X}$, a map 
$$
\bfig
\Vtriangle|alr|<400,250>[Y`E`X;m`q`p]
\efig
$$
is $L^{/X}$-local (as an object in $(\E_{/X})_{/p}$, so $m$ is in $\D_E$) if and only if
$$
\bfig
\square<700,250>[Y`L_{E'}Y`E`E';\eta_{E'}(\eta'_{X}(p)\circ m)`m`L_{E'}(\eta'_{X}(p)m)`\eta'_{X}(p)]
\efig
$$
is a pullback square in $\E_{/X}$. (Note that, in the above, $L_{E'}$ should be $L^{/X}_{p'}$, where $L^{/X}_{p'}$ is the reflector of $(\E_{/X})_{/p'}$ onto $\D^{/X}_{p'}$ and $L^{/X}_\bullet$ is the reflective subfibration on $\E_{/X}$ induced by $L_\bullet$, as in \cref{rmk:localcharacterofreflsubf}. But, by its own definition, $L^{/X}_{p'}=L_{E'}$.)
\end{remark}
The following corollary is probably well-known, though the only explicit reference we could find in the literature is \cite[Lemma 8.6]{topandhomtop}, where the statement is proved in the context of model topoi. Note that our proof is completely internal and does not use the description of $\infty$-topoi as left exact localizations of presheaf categories.

\begin{corollary}
\label{cor:charofntruncmaps}
For $n\geq -2$, a map $p\colon E\rightarrow X$ is $n$-truncated if and only if $\norm{p}_{n+1}$ is $n$-truncated and there is a pullback square
$$
\bfig
\square<500,300>[E`\norm{E}_{n+1}`X`\norm{X}_{n+1};\abs{\cdot}_{n+1}`p`\norm{p}_{n+1}`\abs{\cdot}_{n+1}]
\place(70,220)[\angle]
\efig
$$
\end{corollary}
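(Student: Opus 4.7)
I would derive this as a direct application of \cref{prop:extoflocalalongL'loc} to the truncation reflective subfibration $L^n_\bullet$ on $\E$ whose $L$-local maps are the $n$-truncated maps. For $L^n_\bullet$, the $L$-separated maps are exactly the $(n+1)$-truncated maps, and the $L'$-localization of any object $X\in\E$ is the usual $(n+1)$-truncation map $\abs{\cdot}_{n+1}\colon X\to\norm{X}_{n+1}$.

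The reverse implication will be immediate: if $\norm{p}_{n+1}$ is $n$-truncated and the displayed square is a pullback, then $p$, being a pullback of an $n$-truncated map, is itself $n$-truncated.

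For the forward implication, I will assume that $p$ is $n$-truncated and apply \cref{prop:extoflocalalongL'loc} with $\eta'=\abs{\cdot}_{n+1}$. This yields a pullback square with bottom map $\abs{\cdot}_{n+1}$ and right-hand vertical some $n$-truncated map $q\colon F\to\norm{X}_{n+1}$, namely the $L^n_{\norm{X}_{n+1}}$-reflection of $\abs{\cdot}_{n+1}\circ p$ in $\E_{/\norm{X}_{n+1}}$. The task will then be to identify $F$ with $\norm{E}_{n+1}$ and $q$ with $\norm{p}_{n+1}$. To do so, I will first observe that $F$ is $(n+1)$-truncated as an object: since $q$ is $n$-truncated, hence $(n+1)$-truncated, and $\norm{X}_{n+1}$ is $(n+1)$-truncated, the composite $F\to 1$ is $(n+1)$-truncated. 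Second, the top map $E\to F$ will be $(n+1)$-connected as a pullback of the $(n+1)$-connected map $\abs{\cdot}_{n+1}$, using the stability of $n$-connected maps under base change in an $\infty$-topos. The universal property characterizing the $(n+1)$-truncation as the universal $(n+1)$-connected map into an $(n+1)$-truncated object then identifies the top map with $\abs{\cdot}_{n+1}\colon E\to\norm{E}_{n+1}$ and, by naturality of $\abs{\cdot}_{n+1}$, identifies $q$ with $\norm{p}_{n+1}$.

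The main obstacle will be this last identification: showing that the pullback square produced by \cref{prop:extoflocalalongL'loc}, which a priori involves a purely relative reflection in $\E_{/\norm{X}_{n+1}}$, coincides up to equivalence with the absolute $(n+1)$-truncation naturality square of $p$. The key will be to recognize the upper-right vertex as an absolute $(n+1)$-truncation of $E$ via the combination of $(n+1)$-truncatedness and $(n+1)$-connectedness just described, after which the rest of the identification is essentially formal.
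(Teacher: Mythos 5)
Your proposal is correct and follows essentially the same route as the paper: apply \cref{prop:extoflocalalongL'loc} to the $n$-truncation modality, observe that the upper-right vertex is $(n+1)$-truncated (the paper cites \cref{lm:totalspaceofllocoverlsepislloc} for this, which amounts to your composite-of-truncated-maps argument) and that the top map is $(n+1)$-connected as a pullback of $\abs{\cdot}_{n+1}\colon X\to\norm{X}_{n+1}$, and conclude via the universal characterization of $(n+1)$-connected maps into $(n+1)$-truncated objects. The only difference is cosmetic: the paper leaves the easy converse and the final identification of $q$ with $\norm{p}_{n+1}$ implicit, which you spell out.
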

\begin{proof}
By \cite[Ex.~4.6 \& Ex.~6.4]{locinaninftytopos}, we can apply \cref{prop:extoflocalalongL'loc} where $L_\bullet$ is the $n$-truncation modality and get a pullback square
$$
\bfig
\square<600,300>[E`L_{\norm{X}_{n+1}}(E)`X`\norm{X}_{n+1};n`p`L_{\norm{X}_{n+1}}(\abs{\cdot}_{n+1}p)`\abs{\cdot}_{n+1}]
\place(70,220)[\angle]
\efig
$$
Since $\norm{X}_{n+1}$ is $(n+1)$-truncated and $L_{\norm{X}_{n+1}}(\abs{\cdot}_{n+1}p)$ is $n$-truncated, $L_{\norm{X}_{n+1}}(E)$ is $(n+1)$-truncated. (This is an instance of \cref{lm:totalspaceofllocoverlsepislloc}.) But $n$ is a pullback of the $(n+1)$-connect\-ed map $\abs{\cdot}_{n+1}\colon X\rightarrow\norm{X}_{n+1}$, so it is $(n+1)$-connected. Finally, any $(n+1)$-connected map $m\colon A\rightarrow B$ where $B$ is $(n+1)$-truncated is an $(n+1)$-truncation map of $A$.
\end{proof}
\begin{proposition}[{\cite[Prop.~ 2.26]{locinhott}}]
\label{prop:locofdiagonalandkernelpairofetaprime}
Let
$$
\bfig
\Vtriangle|alr|<350,250>[E`E'`X;\eta'_X(p)`p`p']
\efig
$$
be an $L'$-localization of $p\in\E_{/X}$. Let
$$
\bfig
\Vtriangle|alr|<350,300>[E`R`E\times_{X} E;\eta_{E\times_X E}(\Delta p)`\Delta p`r]
\efig
$$ 
be the $L$-localization of $\Delta p\in \E_{/E\times_{X} E}$ and consider $r'$ defined by the pullback square
\begin{equation}
\tag{$\dagger$}
\bfig
\square<1000,250>[E\times_{E'}E`E'`E\times_{X} E`E'\times_{X} E';`r'`\Delta p'`\eta'_X(p)\times_{X}\eta'_X(p)]
\place(100,150)[\angle]
\efig
\end{equation}
Then there is a natural equivalence $\phi\colon R\stackrel{\simeq}{\rightarrow} E\times_{E'}E$ over $E\times_{X}E$ as in
$$
\bfig
\morphism(0,0)<-350,-300>[E`R;\eta_{E\times_X E}(\Delta p)]
\morphism(0,0)<350,-300>[E`E\times_{E'} E;\Delta(\eta'_X(p))]
\morphism(-350,-300)|l|<350,-350>[R`E\times_X E;r]
\morphism(350,-300)|r|<-350,-350>[E\times_{E'} E`E\times_X E;r']
\morphism(-350,-300)|a|<700,0>[R`E\times_{E'} E;\phi]
\efig
$$
\end{proposition}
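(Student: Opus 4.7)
The plan is to show that $(E \times_{E'} E, r')$ equipped with the canonical morphism $\Delta \eta' \colon \Delta p \to r'$ is itself an $L$-localization of $\Delta p$ in $\E_{/E \times_X E}$; uniqueness of $L$-localization will then force $\phi$ to be an equivalence.

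First I would dispatch the easy points. The object $r'$ is $L$-local: $\Delta p' \in \D_{E' \times_X E'}$ because $p'$ is $L$-separated, and $r'$, being the pullback $(\dagger)$ of $\Delta p'$ along $\eta' \times_X \eta'$, lies in $\D_{E \times_X E}$ by the pullback-compatibility axiom \cref{def:refsub}(1b). The identity $(\eta' \times_X \eta') \circ \Delta p = \Delta p' \circ \eta'$ combined with the universal property of $(\dagger)$ yields a canonical $\Delta \eta' \colon E \to E \times_{E'} E$ with $r' \circ \Delta \eta' = \Delta p$, giving a morphism $\Delta \eta' \colon \Delta p \to r'$ in $\E_{/E \times_X E}$. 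Applying the universal property of $\eta_{E \times_X E}(\Delta p) \colon \Delta p \to r$ to this morphism then supplies the unique $\phi \colon r \to r'$ with $\phi \circ \eta_{E \times_X E}(\Delta p) = \Delta \eta'$, and the triangle in the statement commutes by construction.

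To show $\phi$ is an equivalence, note that since both $r$ and $r'$ lie in $\D_{E \times_X E}$ it suffices to prove $\phi$ is an $L_{E \times_X E}$-equivalence; by two-out-of-three applied to $\Delta \eta' = \phi \circ \eta_{E \times_X E}(\Delta p)$, this reduces to showing that $\Delta \eta'$ itself is an $L_{E \times_X E}$-equivalence. I would verify the universal property directly: for every $s \in \D_{E \times_X E}$, precomposition with $\Delta \eta'$ induces an equivalence
\[
\E_{/E \times_X E}(r', s) \to \E_{/E \times_X E}(\Delta p, s).
\]
The key inputs are the $L$-connectedness of $\eta'$ (\cref{lm:etaprimelconn}), its pullback stability (\cref{rmk:lconnmapsclosedunderpullbacks})---which makes both projections $E \times_{E'} E \rightrightarrows E$ $L$-connected---and the appendix's unique-extension-along-fibers criterion cited as central to \cref{thm:charoflprimeloc}.

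The main obstacle is precisely this final verification. The $L$-connectedness of $\eta'$ is a statement about $\eta'$ as a map in $\E$ with codomain $E'$, whereas we need an $L$-equivalence statement inside the slice $\E_{/E \times_X E}$, a base a priori unrelated to $E'$. Bridging this gap requires combining the defining pullback square $(\dagger)$ with the $L$-connectedness of pullbacks of $\eta'$, and using the appendix's extension-along-fibers machinery to promote these inputs into the required universal property of $\Delta \eta'$ inside $\E_{/E \times_X E}$.
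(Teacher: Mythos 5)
Your setup is correct and matches the paper's opening move: $r'$ is $L$-local as a pullback of $\Delta p'$, the map $\Delta\eta'\colon\Delta p\to r'$ exists, $\phi$ is induced by the universal property of $\eta_{E\times_X E}(\Delta p)$, and by two-out-of-three it suffices to show that $\Delta\eta'$ is an $L_{E\times_X E}$-equivalence. But that last step is the entire content of the proposition, and your proposal does not actually carry it out --- you name it yourself as ``the main obstacle'' and only list tools that might be relevant. Worse, the tools you list do not combine in any straightforward way to close the gap. The $L$-connectedness of $\eta'$ and of the two projections $E\times_{E'}E\rightrightarrows E$ produces $L$-equivalences in the slices $\E_{/E'}$ and $\E_{/E}$ (e.g.\ $\Delta\eta'$, being a section of the $L$-connected projection $\pi_1\colon E\times_{E'}E\to E$, is an $L_E$-equivalence over $E$). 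The statement you need lives over $E\times_X E$, which maps \emph{to} $E$ and $E'$; \cref{lm:lequivandpullbdepsum} only says that $\Sigma_f$ \emph{preserves} $L$-equivalences, not that it reflects them, so knowing $\Sigma_{q_1}(\Delta\eta')\in S_E$ tells you nothing about $\Delta\eta'\in S_{E\times_X E}$. The extension-along-fibers criterion (\cref{prop:uniqueextalongfibers}) is likewise not applicable here in any evident way; in the paper it is used for \cref{thm:charoflprimeloc}, which comes \emph{after} this proposition and cannot be used circularly.

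The paper bridges the gap by a different route, whose key external input you never invoke: \cref{prop:extoflocalalongL'loc}, applied in $\E_{/X}$ to the $L$-local map $r$ over the product object $p\times^X p$, whose $L'$-localization is $\eta'\times_X\eta'$. This exhibits $r$ as the pullback along $\eta'\times_X\eta'$ of an $L$-local map $q\colon T\to E'\times_X E'$, namely the $L_{E'\times_X E'}$-localization of $\Sigma_{\eta'\times_X\eta'}(r)$. One then uses the universal property of $\eta'$ (with \cref{lm:totalspaceofllocoverlsepislloc} guaranteeing the relevant composite is $L$-separated) to build $s\colon E'\to T$ over $X$ with $qs=\Delta p'$, checks that the induced comparison $\psi$ of pullback squares satisfies $\psi\phi=\id$, and finally shows $s\in S_{E'\times_X E'}$ by pushing the $L$-connectedness of $\eta'$ forward along $\Delta p'$ and $\eta'\times_X\eta'$ --- which is legitimate precisely because there one only needs \emph{preservation} of $L$-equivalences under $\Sigma$. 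Since $s$ is a map between $L$-local objects over $E'\times_X E'$, it is an equivalence, hence so are $\psi$ and $\phi$. Without this change of base to $E'\times_X E'$ (or some substitute for it), your direct verification of the universal property of $\Delta\eta'$ over $E\times_X E$ does not go through.
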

\begin{proof}
For sake of readability, we write $\eta'$ and $\eta$ for $\eta'_X(p)$ and $\eta_{E\times_X E}(\Delta p)$, respectively. The natural map $\phi$ is given by the universal property of $\eta$, since $r'$ is $L$-local. (By definition, $r'$ is the pullback of the $L$-local map $\Delta p'$.) Now, since $\eta'\times_X \eta'$ is the $L'$-localization map of the product object $p\times^X p$ of $\E_{/X}$, \cref{prop:extoflocalalongL'loc} applied in $\E_{/X}$ gives that there is a pullback square
$$
\bfig
\node r(0,0)[R]
\node t(800,0)[T]
\node exe(0,-250)[E\times_X E]
\node e'xe'(800,-250)[E'\times_X E']

\arrow|a|[r`t;n]
\arrow|l|[r`exe;r]
\arrow|r|[t`e'xe';q]
\arrow|a|[exe`e'xe';\eta'\times_{X}\eta']

\place(70,-70)[\angle]

\efig
$$
where $n\colon (\eta'\times_{X}\eta')r\rightarrow q$ is the $L$-localization map of $(\eta'\times_{X}\eta')r$. Set $m:= n\eta\colon E\rightarrow~T$ and $l:=\pi q$, where $\pi\colon E'\times_{X} E'\rightarrow X$ is given by the composite map $E'\times_{X} E'\rightarrow E'\stackrel{p'}{\rightarrow}~X$. Note that $\pi$ is $L$-separated, because it is the product in $\E_{/X}$ of the $L$-separated map $p'$ with itself. Hence, since $q$ is $L$-local, $l$ is $L$-separated by \cref{lm:totalspaceofllocoverlsepislloc}. Since $m=n\eta$ is naturally a  map $m\colon p\rightarrow l$ in $\E_{/X}$, there is a unique $s\colon E'\rightarrow T$ over $X$ with commuting triangles
$$
\bfig
\morphism(0,0)<-400,-300>[E`E';\eta']
\morphism(0,0)<400,-300>[E`T;m]
\morphism(-400,-300)|l|<400,-300>[E'`X;p']
\morphism(400,-300)|r|<-400,-300>[T`X;l]
\morphism(-400,-300)|a|<800,0>[E'`T;s]
\efig
$$
Now,
$
qs\eta'=qm=qn\eta=(\eta'\times_X \eta')\Delta p=\Delta p'\eta'
$
so that $qs=\Delta p'$ and we can write $s\colon \Delta p'\rightarrow q$ as a map over $E'\times_{X}E'$. Hence, $s$ induces the comparison map $\psi$ of pullback squares in 
$$
\bfig
\cube|alma|<1000,650>[E\times_{E'}E`E'`\ E\times_X E`E'\times_{X}E';`r'`\Delta p'`]%
(350,-270)|alrb|<1000,650>[R`T`E\times_X E`E'\times_X E';n`r`q`\eta'\times_X \eta']%
/{-->}`{>}`{>}`{>}/[\psi`s`\id`\id]
\efig
$$
Since the front face is a pullback, it follows that $\psi\circ\Delta \eta'=\eta$, from which we get $\psi\phi\eta=\eta$, so that $\psi\circ\phi=\id$. We now claim that $s$ is an equivalence. This would imply that $\psi$ (and therefore also $\phi$) is an equivalence. Since $s\colon\Delta p'\rightarrow q$ is a map between $L$-local maps over $E'\times_{X}E'$, it is enough to show that $s\in S_{E'\times_{X}E'}$. Now, $\eta'\colon p\rightarrow p'$ is $L$-connected so it is an $L_{E'}$-equivalence  (more precisely, $\eta'\colon\eta'\rightarrow~\id_{E'}$ is in $S_{E'}$). By \cref{lm:lequivandpullbdepsum}~(ii), composing $\eta'\colon\eta'\rightarrow\id_{E'}$ with $\Delta p'$ gives that $\eta'\colon (\Delta p')\eta'\rightarrow\Delta p'$ is in $S_{E'\times_{X}E'}$. Similarly, composing domain and codomain of $\eta$ with $\eta'\times_X \eta'$ turns $\eta$ into a map in $S_{E'\times_{X}E'}$ and then $m=n\eta$ is in $S_{E'\times_{X}E'}$, since $n$ is an $L$-equivalence. Since $s\eta'=m$, $s\in S_{E'\times_{X}E'}$, as needed.
\end{proof}

Our next result characterizes $L'$-localization maps in terms of their diagonal maps. We will use here some results from the Appendix (\cref{subsec:contractibility}).

\begin{theorem}[{\cite[Thm.~2.34]{locinhott}}]
\label{thm:charoflprimeloc}
The following are equivalent for a map in $\E_{/Z}$
$$
\bfig
\Vtriangle|alr|<400,300>[X`X'`Z;\eta'`p`p']
\efig
$$
\begin{enumerate}
\item $\eta'$ is an $L'$-localization map of $p$.
\item $\eta'$ is an effective epimorphism and
$$
\bfig
\Vtriangle|alr|<400,300>[X`X\times_{X'}X`X\times_Z X;\Delta\eta'`\Delta p`]
\efig
$$
is an $L$-localization map of $\Delta p$.
\end{enumerate}
\end{theorem}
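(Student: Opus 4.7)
My plan is to prove the two implications separately. For $(1)\Rightarrow(2)$, the first clause is essentially already done: \cref{prop:locofdiagonalandkernelpairofetaprime} produces a natural equivalence $\phi\colon R \to X\times_{X'}X$ over $X\times_Z X$ with $\phi\circ\eta_{X\times_Z X}(\Delta p)=\Delta\eta'$, where $R$ is the $L$-localization of $\Delta p$, so $\Delta\eta'$ itself has the universal property of an $L$-localization of $\Delta p$. For the effective epimorphism clause, I would take the image factorization $\eta'=\iota\circ e$ in $\E$ with $e\colon X\to Y$ an effective epi and $\iota\colon Y\to X'$ a monomorphism. Since $\iota$ is mono, the square with vertical maps $\Delta(p'\iota)$ and $\Delta p'$ and horizontal maps $\iota$ and $\iota\times_Z\iota$ is a pullback, so $\Delta(p'\iota)$ is $L$-local (because $p'$ is $L$-separated by assumption); hence $p'\iota\in\E_{/Z}$ is $L$-separated. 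Applying the $L'$-localization universal property of $\eta'$ to $e\colon p\to p'\iota$ yields a unique $\chi\colon p'\to p'\iota$ with $\chi\eta'=e$. Both $\iota\chi$ and $\id_{p'}$ then solve the same extension problem $\eta'=(-)\circ\eta'$, so the uniqueness half of that universal property forces $\iota\chi=\id_{p'}$. Thus $\iota$ is a split mono, hence an equivalence, and $\eta'\simeq e$ is an effective epimorphism.

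For $(2)\Rightarrow(1)$, I would first verify that $p'$ is $L$-separated. The map $\eta'\times_Z\eta'\colon X\times_Z X\to X'\times_Z X'$ is a composite of two pullbacks of $\eta'$, hence an effective epimorphism, and pulling $\Delta p'$ back along it yields precisely the map $r'\colon X\times_{X'}X\to X\times_Z X$. Since $r'$ is the target of the $L$-localization $\Delta\eta'$, it is $L$-local, so by the local character of $L$-local maps (\cref{prop:llocalmapsarelocal}) $\Delta p'$ is $L$-local, i.e., $p'$ is $L$-separated.

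The substantive step is then the universal property. Given an $L$-separated $q\colon Y\to Z$ and a map $f\colon p\to q$ in $\E_{/Z}$, I aim to produce a unique extension $g\colon p'\to q$ with $g\eta'=f$. My strategy is to combine descent along the effective epimorphism $\eta'$ with the $L$-localization property of $\Delta\eta'$. Working in $\E_{/X\times_Z X}$, set $W:=(f\times_Z f)^{\ast}\Delta q$; the map $W\to X\times_Z X$ is $L$-local (pullback of the $L$-local $\Delta q$), and the identification $\Delta q\circ f=(f\times_Z f)\circ\Delta p$ lifts $f$ canonically to $\tilde{f}\colon\Delta p\to W$ in $\E_{/X\times_Z X}$. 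By the $L$-localization property of $\Delta\eta'\colon\Delta p\to r'$, there is then a unique $\tilde{g}\colon r'\to W$ extending $\tilde{f}$; projecting to $Y$ yields $h\colon X\times_{X'}X\to Y$ over $Z$ with $h\circ\Delta\eta'=f$ and $\Delta q\circ h=(f\pi_1,f\pi_2)$, which encodes the identification $f\pi_1\simeq f\pi_2$ coherently with $f$ on the diagonal.

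The hardest step is promoting this $1$-cocycle datum to an actual descended map $g\colon p'\to q$ carrying all the required higher-categorical coherence. This is where I plan to invoke \cref{prop:uniqueextalongfibers} from the Appendix, which the introduction flags as ``crucial for the proof of \cref{thm:charoflprimeloc}'' and which is phrased exactly as a unique-extension criterion: the $L$-local object $W\to X\times_Z X$ built from $q$, together with the $L$-localization $\Delta\eta'$, should be precisely the input that proposition takes, producing a unique lift against the effective epimorphism $\eta'$. The main obstacle I anticipate is organizing that higher coherence externally; in the HoTT source of \cite{locinhott} term-level function extensionality absorbs most of this bookkeeping, whereas here it must be replayed in topos-theoretic terms, which is exactly what the Appendix machinery is designed to supply.
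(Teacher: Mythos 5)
Your $(1)\Rightarrow(2)$ direction and your verification that $p'$ is $L$-separated in $(2)\Rightarrow(1)$ match the paper's argument essentially step for step: the (effective epi, mono)-factorization trick with the split mono, and the descent of $L$-locality of $\Delta p'$ along the effective epimorphism $\eta'\times_Z\eta'$ using that $L$-local maps form a local class. Those parts are fine.

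The universal property in $(2)\Rightarrow(1)$, however, has a genuine gap. Your cocycle construction produces a map $h\colon X\times_{X'}X\to Y$ with $h\circ\Delta\eta'=f$, i.e.\ a coherent identification of $f\pi_1$ with $f\pi_2$ over the first stage of the \v{C}ech nerve of $\eta'$. But descending $f$ along the effective epimorphism $\eta'$ to a map $X'\to Y$ requires a cone over the \emph{entire} \v{C}ech nerve --- coherence data in all simplicial degrees --- and a single $h$ in degree one does not supply this. You correctly sense that \cref{prop:uniqueextalongfibers} is the tool meant to absorb the higher coherence, but you misdescribe what it takes as input: it does not accept a cocycle datum and descend it. Rather, it reduces unique extension of $f$ along $\eta'$ to the \emph{contractibility} of the object $E=\sum_{X'\times Y\to X'}\prod(\pr_X,X'\times f)^{(\pr_X,\eta'\times Y)}$ in $\E_{/X'}$, and establishing that contractibility is the substantive computation you have not performed. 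In the paper this is done by pulling $E$ back along the effective epi $\eta'$ (\cref{lm:iscontrandpllbck}), applying Beck--Chevalley, identifying the resulting internal hom as $(\id_{X\times X},f\pr_1)^{t\times Y}$ where $t$ is the $L$-local map $X\times_{X'}X\to X\times X$, using that $t\times Y$ is an $L$-localization of $\Delta X\times Y$ and that $(\id_{X\times X},f\pr_1)$ is $L$-local to replace the exponent by $\Delta X\times Y$, and finally collapsing $\prod_{\pr_{X\times Y}\circ(\Delta X\times Y)}(\id_X,f)$ to $\id_X$. Both hypotheses of $(2)$ enter precisely inside this chain of equivalences; your sketch stops before it begins.
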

\begin{proof}
We prove the theorem when $Z=1$; the general statement follows from this one by \cref{rmk:localcharacterofreflsubf}. We show first that (1) implies (2). If $\eta'\colon X\rightarrow X'$ is an $L'$-localization of $X$, then, by \cref{prop:locofdiagonalandkernelpairofetaprime},
we only need to show that $\eta'$ is an effective epimorphism. Let $(\pi,i)$ be the (effective epi,mono)-factorization of $\eta'$, with $i\colon W\rightarrow X'$. Since $i$ is a mono, $\Delta W= (i\times i)^\ast (\Delta X')$. Hence, since $X'$ is $L$-separated, so is $W$. Therefore there is a unique $s\colon X'\rightarrow W$ with $s\eta'=\pi$. From $is\eta'=i\pi=\eta'$, we get that $is=\id_{X'}$. Thus, $i$ is both a mono and an effective epi, so it is an equivalence.\par
Conversely, assume $\eta'$ is an effective epimorphism and $\Delta \eta'$ is the $L$-localization of $\Delta X$. In the pullback square
\begin{equation}
\label{eq:tcharofl'}
\tag{$\ast$}
\bfig
\square<600,250>[X\times_{X'}X`X'`X\times X`X'\times X';`t`\Delta X'`\eta'\times \eta']
\place(70,150)[\angle]
\efig
\end{equation}
$\eta'\times\eta'$ is also an effective epi and $t$ is $L$-local by hypothesis. Thus, $\Delta X'$ is $L$-local since $L$-local maps are a local class of maps in $\E$. This shows that $X'$ is $L$-separated. We now verify that $\eta'$ has the universal property of an $L'$-localization map. Let $f\colon X\rightarrow Y$ be a map into an $L$-separated object $Y$. We show that $f$ extends uniquely along $\eta'$, by applying \cref{prop:uniqueextalongfibers} to $f$ and $\eta'$. We want to show that
$$
E:=\sum_{X'\times Y\rightarrow X'}\left(\prod_{X\times X'\times Y\rightarrow X'\times Y}(\pr_X,X'\times f)^{(\pr_X,\eta'\times Y)}\right)
$$
is contractible in $\E_{/X'}$. Applying \cref{lm:iscontrandpllbck} and the Beck-Chevalley condition (\cref{lm:BCcond}) to the pullback squares
$$
\bfig
\node xxy(0,0)[X\times X\times Y]
\node xy(700,0)[X\times Y]
\node x(1400,0)[X]
\node xx'y(0,-300)[X\times X'\times Y]
\node x'y(700,-300)[X'\times Y]
\node x'(1400,-300)[X']

\arrow[xxy`xy;\pr_{X\times Y}]
\arrow[xy`x;\pr_{X}]
\arrow[xxy`xx'y;X\times\eta'\times Y]
\arrow[xx'y`x'y;]
\arrow|b|[xy`x'y;\eta'\times Y]
\arrow|m|[x'y`x';]
\arrow|b|[x`x';\eta']

\place(100,-100)[\angle]
\place(800,-100)[\angle]

\efig
$$
we can instead show that
$$
E':=\sum_{X\times Y\rightarrow X}\left(\prod_{X\times X\times Y\rightarrow X\times Y}(X\times\eta'\times Y)^\ast\left((\pr_X,X'\times f)^{(\pr_X,\eta'\times Y)}\right)\right)
$$
is contractible in $\E_{/X}$. We will show that this object of $\E_{/X}$ is equivalent to the object $\id_X$, which is contractible in $\E_{/X}$. \cref{lm:pllbckcartclosedfunct} gives that
$$
(X\times\eta'\times Y)^\ast\left((\pr_X,X'\times f)^{(\pr_X,\eta'\times Y)}\right)\simeq$$$$\simeq\left((X\times\eta'\times Y)^\ast(\pr_X,X'\times f)\right)^{(X\times\eta'\times Y)^\ast((\pr_X,\eta'\times Y))}
$$
Notice that $$(\pr_X,X'\times f)=(f\times \pr_Y)^\ast(\Delta Y),\quad (\pr_X,\eta'\times Y)=(\eta'\times\pr_{X'})^\ast(\Delta X')$$ and $(f\times \pr_Y)(X\times\eta'\times Y)=(f\times Y)(\pr_1,\pr_3),$ where $\pr_1\colon X\times X\times Y\rightarrow X$ and $\pr_3\colon X\times X\times Y\rightarrow Y$ are appropriate projections. One can then see that
$$
(X\times\eta'\times Y)^\ast\left((\pr_X,X'\times f)\right)=(\id_{X\times X}, f\pr_1)\colon X\times X\rightarrow X\times X\times Y,
$$
$$
(X\times\eta'\times Y)^\ast\left((\pr_X,\eta'\times Y)\right)=t\times Y\colon (X\times_{X'}X)\times Y\rightarrow X\times X\times Y
$$
where $t$ is defined in the pullback square \eqref{eq:tcharofl'} above. Therefore, $$
(X\times\eta'\times Y)^\ast\left((\pr_X,X'\times f)^{(\pr_X,\eta'\times Y)}\right)\simeq (\id_{X\times X}, f\pr_1)^{t\times Y}.$$
Now, since $t$ is the localization of $\Delta X$ in $\E_{/X\times X}$, taking pullbacks along the projection $X\times X\times Y\rightarrow X\times X$ gives that $t\times Y$ is the localization of $\Delta X\times Y$ in $\E_{/X\times X\times Y}$. Since $(\id_{X\times X}, f\pr_1)$ is $L$-local (as the pullback of the $L$-local map $\Delta Y$), we further have $$
(\id_{X\times X}, f\pr_1)^{t\times Y}\simeq(\id_{X\times X}, f\pr_1)^{\Delta X\times Y}\simeq$$$$\simeq\prod_{\Delta X\times Y}(\Delta X\times Y)^\ast (\id_{X\times X}, f\pr_1)\simeq \prod_{\Delta X\times Y} (\id_X,f), $$
where $(\id_X,f)\colon X\rightarrow X\times Y$. We can now finally conclude because
$$
E'\simeq\sum_{X\times Y\rightarrow X}\left(\prod_{\pr_{X\times Y}\colon X\times X\times Y\rightarrow X\times Y}\left(\prod_{\Delta X\times Y} (\id_X,f)\right)\right)\simeq
$$$$\simeq \sum_{X\times Y\rightarrow X}\left(\prod_{\pr_{X\times Y}\circ (\Delta X\times Y)}(\id_X,f)\right)=\sum_{X\times Y\rightarrow X}(\id_X,f)=\id_X.$$
\end{proof}
\section{Existence of $L'$-localization}
\label{sec:existofl'loc}
We prove here that the class of $L$-separated maps is the class of local maps for a reflective subfibration on $\E$, and we start by proving a few preliminary results.

\smallskip  
Recall that, if $p,q$ are objects in a slice category $\E_{/Z}$, we write $p\times^{Z}q$ to mean the product object of $p$ and $q$ in $\E_{/Z}$.

\smallskip
The first result we need is a term-free interpretation of an internal Yoneda lemma involving diagonal maps.
\begin{lemma}
\label{lm:localyonlem}
Let $t\colon E\rightarrow X$ be a map in $\E$ and form the pullback square
$$
\bfig
\square|blrb|<500,250>[X\times E`E`X\times X`X;`X\times t`t`\pr_2]
\place(70,150)[\angle]
\efig
$$ 
Then there is a map in $\E_{/X^2}$
$$
\bfig
\Vtriangle|alr|<400,300>[E`X\times E`X\times X;(t,\id)`(\Delta X)t`X\times t]
\place(1000,200)[,]
\efig
$$
inducing an equivalence
$$
\beta\colon t\stackrel{\simeq}{\longrightarrow}\prod_{\pr_1}(X\times t)^{\Delta X}
$$
in $\E_{/X}$, where $\pr_1\colon X\times X\rightarrow X$ is the projection onto the first component.
\end{lemma}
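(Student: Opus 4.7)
My plan is to construct $\beta$ by two adjoint transpositions and then verify it is an equivalence using the $\infty$-categorical Yoneda lemma. First, I identify the relevant pullback: since $\pr_1\circ\Delta X=\id_X$, pullback pasting gives $\pr_1^*(t)\times^{X^2}\Delta X\simeq (\Delta X)t$ in $\E_{/X^2}$. Under this identification, the triangle in the statement exhibits $(t,\id)$ as a map $\pr_1^*(t)\times^{X^2}\Delta X\to X\times t$ in $\E_{/X^2}$. Transposing it through the exponential adjunction $(-\times^{X^2}\Delta X)\dashv (-)^{\Delta X}$ and then through $\pr_1^*\dashv\prod_{\pr_1}$ defines $\beta\colon t\to\prod_{\pr_1}(X\times t)^{\Delta X}$ in $\E_{/X}$.

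To prove $\beta$ is an equivalence, I replay the same chain for an arbitrary $f\colon A\to X$ in $\E_{/X}$, obtaining a natural equivalence
\begin{align*}
\Map_{\E_{/X}}\!\bigl(f,\,\prod_{\pr_1}(X\times t)^{\Delta X}\bigr)
 &\simeq \Map_{\E_{/X^2}}\!\bigl(\pr_1^*(f),\,(X\times t)^{\Delta X}\bigr) \\
 &\simeq \Map_{\E_{/X^2}}\!\bigl((\Delta X)f,\,X\times t\bigr) \\
 &\simeq \Map_{\E_{/X}}\!\bigl(f,\,t\bigr),
\end{align*}
where the last step uses $\Sigma_{\Delta X}\dashv (\Delta X)^*$ together with $(\Delta X)^*(X\times t)=t$ (which again comes from $\pr_2\circ\Delta X=\id_X$). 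A short calculation shows that under this chain the element $\beta\in\Map(t,\prod_{\pr_1}(X\times t)^{\Delta X})$ is carried to $\id_t$: its image in $\Map_{\E_{/X^2}}((\Delta X)t,X\times t)$ is by construction $(t,\id)$, and $(\Delta X)^*\bigl((t,\id)\bigr)$ is the identity of $t$ in $\E_{/X}$. Hence the natural equivalence above is inverse to postcomposition with $\beta$, and $\infty$-Yoneda gives that $\beta$ is an equivalence.

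The main technical work lies not in the transpositions themselves but in ensuring naturality in $f$ of each step. Specifically, the identification $\pr_1^*(f)\times^{X^2}\Delta X\simeq (\Delta X)f$ must be made natural via a Beck--Chevalley argument (\cref{lm:BCcond}), and the interaction of $\pr_1^*$ with the internal exponential uses the fact that pullback is a cartesian-closed functor (\cref{lm:pllbckcartclosedfunct}). No property of the reflective subfibration $L_\bullet$ enters; the lemma is purely categorical, an internal Yoneda statement for the diagonal $\Delta X$, which explains why it sits in the preparatory section.
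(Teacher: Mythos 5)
Your proposal is correct and follows essentially the same route as the paper: $\beta$ is obtained by transposing $(t,\id)\colon (\Delta X)t\to X\times t$ through the exponential and $\pr_1^\ast\dashv\prod_{\pr_1}$ adjunctions, and the equivalence is established by the same chain of natural mapping-space equivalences plus Yoneda (the paper phrases your last step via $\Sigma_{\pr_2}\dashv\pr_2^\ast$ rather than $\Sigma_{\Delta X}\dashv(\Delta X)^\ast$, an immaterial difference since $\pr_2\Delta X=\id_X$). Your explicit check that the chain carries $\beta$ to $\id_t$ is a detail the paper leaves implicit.
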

\begin{proof}
For any $k\colon M\rightarrow X$, the product object $(k\times X)\times^{X^2}(\Delta X)$ in $\E_{/X^2}$ is given by $(\Delta X)k$. In fact, $(\Delta X)k$ is also the product object $(X\times k)\times^{X^2} (\Delta X)$ in $\E_{/X^2}$. Taking $k=t$, we get that $(t,\id)\colon (\Delta X)t\rightarrow X\times t$ gives a map $$\beta\colon t\longrightarrow\prod_{\pr_1}(X\times t)^{\Delta X}$$ by adjointness. Using the fact that $\Delta X$ is a section of $\pr_2$, and considering the adjoint pairs $\Sigma_{\pr_2}\dashv\pr_2^\ast$, $\pr_1^\ast\dashv\prod_{\pr_1}$, we get a chain of natural equivalences
$$
\E_{/X}(k,t)\simeq \E_{/X}(\pr_2(\Delta X)k,t)\simeq\E_{/X^2}\left( (\Delta X)k, X\times t \right)\simeq$$$$\simeq\E_{/X^2}\left( k\times X, (X\times t)^{\Delta X}\right)\simeq\E_{/X}\left(k,\prod_{\pr_1}(X\times t)^{\Delta X}\right)
$$
where the composite map is given by composition with $\beta$.
\end{proof}

\begin{lemma}
\label{lm:subsidlmforexistlprimeloc}
Let $X\in\E$ and let $r\colon R\rightarrow  X^2$ be an object in $\E_{/X^2}$. Let also $\ti{X\times r}$ be the composite map $(\tau\times X)\circ (X\times r)$, where $\tau\colon X^2\simeq X^2$ is the canonical involution.  Then the following hold.
\begin{itemize}
\item[(i)] There is a natural equivalence in $\E_{/X^2}$
$$
\beta\colon r\xrightarrow{\simeq}\prod_{\pr_{23}}(\ti{X\times r})^{(\Delta X\times  X)}$$
\item[(ii)] There is a map $\rho\colon \Delta X\rightarrow \prod_{\pr_{23}}(\ti{X\times r})^{(r\times  X)}$ such that, given any map $\eta\colon \Delta X\rightarrow~r$ in $\E_{/X^2}$, there is a commutative square
\begin{equation}
\label{eq:factoringsquare}
\bfig
\square/{-->}`{>}`{>}`{>}/<800,550>[\Delta X`\prod\limits_{\pr_{23}}(\ti{X\times r})^{(r\times X)}`r`\prod\limits_{\pr_{23}}(\ti{X\times r})^{(\Delta X\times X)};\rho`\eta`\prod\limits_{\pr_{23}}(\ti{X\times r})^{(\eta\times X)}`\beta]
\efig
\end{equation}
\end{itemize}
\end{lemma}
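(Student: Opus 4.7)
The plan is to prove both parts by a Yoneda-style argument that exploits the adjunctions $\pr_{23}^{\ast}\dashv\prod_{\pr_{23}}$ and the exponential adjunction in $\E_{/X^3}$. For part~(i), I would test the right-hand side against an arbitrary $k\in\E_{/X^2}$ and combine these adjunctions to obtain
$$\E_{/X^2}\Bigl(k,\prod_{\pr_{23}}(\widetilde{X\times r})^{\Delta X\times X}\Bigr)\simeq\E_{/X^3}\Bigl((\Delta X\times X)\times^{X^3}\pr_{23}^{\ast}k,\;\widetilde{X\times r}\Bigr).$$
The crucial identity is $\pr_{23}\circ(\Delta X\times X)=\id_{X^2}$, which makes $(\Delta X\times X)\times^{X^3}\pr_{23}^{\ast}k$ canonically equivalent to $k$, now viewed as an object of $\E_{/X^3}$ via $(\Delta X\times X)\circ k$. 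Unwinding the definition $\widetilde{X\times r}=(\tau\times X)(X\times r)$ then identifies the resulting mapping space with $\E_{/X^2}(k,r)$; the Yoneda lemma supplies the equivalence $\beta$, concretely induced by the map $r\to X\times r$ in $\E_{/X^3}$ with components $r_1$ and $\id_R$---the direct analogue of the map $(t,\id)$ appearing in \cref{lm:localyonlem}.

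For part~(ii), I would construct $\rho$ analogously. The same chain of adjunctions yields
$$\E_{/X^2}\Bigl(\Delta X,\prod_{\pr_{23}}(\widetilde{X\times r})^{r\times X}\Bigr)\simeq\E_{/X^3}\Bigl((r\times X)\times^{X^3}\pr_{23}^{\ast}(\Delta X),\;\widetilde{X\times r}\Bigr).$$
Since $\pr_{23}\circ(r\times X)=r_2\times X$, the left-hand object of this hom is equivalent to $R$ with structure map $(r_1,r_2,r_2)\colon R\to X^3$. I then define $\rho$ to be the transpose of the canonical map $R\to X\times R$ with components $r_2$ and $\id_R$, which lifts to $\E_{/X^3}$ since $\widetilde{X\times r}\circ(r_2,\id_R)=(r_1,r_2,r_2)$ matches the structure map on the source.

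Finally, the commutativity of the square \eqref{eq:factoringsquare} is to be checked by transposing both composites through the adjunctions to maps $X\to X\times R$ over $X^3$, where $X$ carries the triple diagonal $\Delta_3X$ (obtained as the pullback $(\Delta X\times X)\times^{X^3}\pr_{23}^{\ast}(\Delta X)$). The composite $\beta\circ\eta$ transposes, via part~(i) applied to $k=\Delta X$, to the map $(\id_X,\eta)$; the other composite transposes, by naturality of the exponential functor in its exponent, to $(r_2,\id_R)\circ\eta=(r_2\eta,\eta)=(\id_X,\eta)$, where I use that $r\eta=\Delta X$ forces $r_2\eta=\id_X$. The hardest part is the bookkeeping: several pullback identifications such as $\pr_{23}\circ(\Delta X\times X)=\id_{X^2}$ need to be carried out term-free, and the naturality of the adjoint correspondences must be applied carefully in each of the two transposition steps.
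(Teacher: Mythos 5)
Your proposal is correct and follows essentially the same route as the paper: part (i) is the adjunction-chain Yoneda argument hinging on $\pr_{23}\circ(\Delta X\times X)=\id_{X^2}$ (the paper packages this as \cref{lm:localyonlem} applied in the slice $\E_{/X}$, with $\beta^\sharp=(r_1,\id)$), and part (ii) constructs $\rho$ via the adjunct $(r_2,\id_R)\colon R\to X\times R$ over $(r_1,r_2,r_2)$ and checks commutativity by transposing both composites to the map $(\id_X,\eta)$ out of the triple diagonal, using that $\eta$ is a section of both $r_1$ and $r_2$. This matches the paper's proof in all essentials.
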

\begin{proof}
The first claim is a special case of \cref{lm:localyonlem} applied to the map $r=(r_1,r_2)\colon R\rightarrow X^2$, seen as a map $r\colon r_2\rightarrow \pr_2$ in $\E_{/X^2}$. Indeed, the following pullback square in $\E$
$$
\bfig
\square<500,250>[X^3`X^2`X^2`X;\pr_{13}`\pr_{23}`\pr_2`\pr_2]
\place(70,180)[\angle]
\efig
$$
witnesses that $\pr_3\colon X^3\rightarrow X$ is the product object of $\pr_2\colon X^2\rightarrow X$ with itself in $\E_{/X}$ and the displayed maps $\pr_{13}$ and  $\pr_{23}$ give the projection maps out of this product. The map $\Delta X\times X\colon X^2\rightarrow X^3$, seen as a map $\pr_3\rightarrow \pr_3$, is the diagonal of the object $\pr_3\in\E_{/X}$. Since $\ti{X\times r}=\pr_{13}^\ast(r)$, \cref{lm:localyonlem} gives the desired natural equivalence $\beta\colon r\simeq\prod_{\pr_{23}}(\ti{X\times r})^{(\Delta X\times X)}$.\par
For the second part, we describe the map $\rho$ and how it makes the square \eqref{eq:factoringsquare} commute by looking at its adjunct. Under the adjunction $\pr_{23}^\ast\dashv\prod_{\pr_{23}}$, giving a square as \eqref{eq:factoringsquare} is the same as giving a square 
$$
\bfig
\square/{-->}`{>}`{>}`{>}/<800,350>[X\times\Delta X`(\ti{X\times r})^{(r\times X)}`X\times r`(\ti{X\times r})^{(\Delta X\times X)};\rho'`X\times\eta`(\ti{X\times r})^{(\eta\times X)}`\beta']
\efig
$$
since $X\times\Delta X=\pr_{23}^\ast(\Delta X)$ and similarly for $X\times r$. Taking further adjoints along $(-)\times^{X^2}(\Delta X\times X)\dashv (-)^{\Delta X \times X}$, we need to exhibit a square
$$
\bfig
\square|alrb|/{>}`{>}`{>}`{-->}/<1900,350>[(X\times\Delta X)\times^{X^3}(\Delta X\times X)`(X\times r)\times^{X^3}(\Delta X\times X)`(X\times\Delta X)\times^{X^3}(r\times X)`\ti{X\times r};(X\times\eta)\times^{X^3}(\Delta X\times X)`(X\times \Delta X)\times^{X^3}(\eta\times X)`\beta^\sharp`\rho^\sharp]
\efig
$$
The products $(X\times\Delta X)\times^{X^3}(\Delta X\times X),\ (X\times r)\times^{X^3}(\Delta X\times X)$ and $(X\times\Delta X)\times^{X^3}(r\times X)$ in $\E_{/X^3}$, together with their projections onto the factors, are represented, in order, by the following pullback squares in $\E$
$$
\bfig
\node x(0,0)[X]
\node xx(600,0)[X^2]
\node xx1(0,-350)[X^2]
\node x3(600,-350)[X^3]
\node r(1200,0)[R]
\node xx2(1800,0)[X^2]
\node xr(1200,-350)[X\times R]
\node x31(1800,-350)[X^3]
\node r2(2300,0)[R]
\node rx(2900,0)[R\times X]
\node xx5(2300,-350)[X^2]
\node x32(2900,-350)[X^3]

\arrow[x`xx;\Delta X]
\arrow[x`xx1;\Delta X]
\arrow|m|[xx`x3;\Delta X\times X]
\arrow|b|[xx1`x3;X\times\Delta X]
\arrow|m|[x`x3;(\id,\id,\id)]
\arrow[r`xx2;r]
\arrow[r`xr;(r_1,\id)]
\arrow|m|[xx2`x31;\Delta X\times X]
\arrow|b|[xr`x31;X\times r]
\arrow|m|[r`x31;(r_1,r_1,r_2)]
\arrow[r2`rx;(\id,r_2)]
\arrow[r2`xx5;r]
\arrow|m|[rx`x32;r\times X]
\arrow|b|[xx5`x32;X\times\Delta X]
\arrow|m|[r2`x32;(r_1,r_2,r_2)]

\place(80,-80)[\angle]
\place(1280,-80)[\angle]
\place(2380,-80)[\angle]
\efig
$$
Using \cref{lm:localyonlem} as in the first part, we know the map $\beta^\sharp$ is given by 
$$
\bfig
\node r(0,0)[R]
\node xx(0,-300)[X^2]
\node xr(600,0)[X\times R]
\node xxx(600,-300)[X^3]

\arrow|a|[r`xr;\beta^\sharp=(r_1,\id)]
\arrow|l|[r`xx;r]
\arrow|r|[xr`xxx;\ti{X\times r}]
\arrow|b|[xx`xxx;\Delta X\times X]
\arrow|m|[r`xxx;(r_1,r_1,r_2)]
\efig
$$
We take $\rho^\sharp$ to be given by
$$
\bfig
\node r(0,0)[R]
\node xx(0,-300)[X^2]
\node xr(600,0)[X\times R]
\node xxx(600,-300)[X^3]

\arrow|a|[r`xr;\rho^\sharp=(r_2,\id)]
\arrow|l|[r`xx;r]
\arrow|r|[xr`xxx;\ti{X\times r}]
\arrow|b|[xx`xxx;X\times \Delta X]
\arrow|m|[r`xxx;(r_1,r_2,r_2)]
\efig
$$
Then the composite maps $\beta^\sharp\left((X\times\eta)\times^{X^3}(\Delta X\times X)\right)$ and $\rho^\sharp\left((X\times\Delta X)\times^{X^3}(\eta\times X)\right)$ are given by the following composite maps in $\E_{/X^3}$, respectively:
$$
\bfig
\node x(0,0)[X]
\node r(400,0)[R]
\node xr(900,0)[X\times R]
\node xxx(400,-350)[X^3]
\node x1(1500,0)[X]
\node r1(1900,0)[R]
\node xr1(2400,0)[X\times R]
\node xxx1(1900,-350)[X^3]

\arrow|a|[x`r;\eta]
\arrow|a|[r`xr;(r_1,\id)]
\arrow|l|[x`xxx;(\id,\id,\id)]
\arrow|m|[r`xxx;(r_1,r_1,r_2)]
\arrow|r|[xr`xxx;\ti{X\times r}]
\arrow|a|[x1`r1;\eta]
\arrow|a|[r1`xr1;(r_2,\id)]
\arrow|l|[x1`xxx1;(\id,\id,\id)]
\arrow|m|[r1`xxx1;(r_1,r_2,r_2)]
\arrow|r|[xr1`xxx1;\ti{X\times r}]
\efig
$$
By using properties of the product $X\times R$ and since $\eta$ is a section of both $r_1$ and $r_2$, one can see that these composite maps are equal since they are both equal to $(\id,\eta)\colon(\id,\id,\id)\rightarrow \ti{X\times r}$ in $\E_{/X^3}$. (The needed homotopies are obtained by using either degenerate $2$-simplices or the $2$-simplices defining $\eta\colon \Delta X\rightarrow r$.)
\end{proof}
\begin{theorem}[{\cite[Thm.~2.25]{locinhott}}]
\label{thm:existoflprimeloc}
For any $Y\in\E$, each $f\in\E_{/Y}$ has an $L'$-localization $\eta'_Y(f)\colon f\rightarrow f'$.
\end{theorem}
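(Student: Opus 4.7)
The plan is first to use \cref{rmk:localcharacterofreflsubf} to reduce the statement to the case $Y=1$: it suffices to show that every object $X\in\E$ admits an $L'$-localization $\eta'\colon X\to X'$, since applying this assertion inside the induced reflective subfibration $L^{/Y}_\bullet$ on $\E_{/Y}$ yields an $L'$-localization of an arbitrary $f\in\E_{/Y}$. So fix $X\in\E$, and let $\eta\colon\Delta X\to r$ be the $L$-localization of $\Delta X\in\E_{/X\times X}$, with $r\colon R\to X\times X$. The strategy is to build $X'$ as the quotient of a groupoid object on $X$ whose object of arrows is $R$, and then invoke \cref{thm:charoflprimeloc} to verify universality.

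Next I would promote $r$ to a (Segal) groupoid object $R_\bullet$ in $\E$ with $R_0=X$ and $R_1=R$. Reflexivity is provided by $\eta$ itself, viewed in $\E$ as a map $X\to R$; symmetry comes by pulling the localization back along the involution $\tau\colon X\times X\to X\times X$ (which fixes $\Delta X$), so that uniqueness of $\eta$ yields a self-equivalence of $r$ swapping the factors. Composition is supplied by \cref{lm:subsidlmforexistlprimeloc}: since $r$ is $L$-local, so is $\prod_{\pr_{23}}(\ti{X\times r})^{r\times X}$ (built by pullback, internal hom, and $\Pi$ from $L$-local maps, using \cref{prop:closureoflsepmapsunderpllbckdepprod} and the facts from the Appendix), and hence the map $\rho$ of part~(ii) factors uniquely through $\eta$ as some $\ti\rho\colon r\to \prod_{\pr_{23}}(\ti{X\times r})^{r\times X}$. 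Transposing $\ti\rho$ across the adjunctions $\pr_{23}^\ast\dashv\prod_{\pr_{23}}$ and $(-)\times^{X^3}(r\times X)\dashv(-)^{r\times X}$ yields a composition map $R\times_X R\to R$ over $X\times X$; the commuting square of \cref{lm:subsidlmforexistlprimeloc}(ii), combined with the equivalence $\beta$ of part~(i), encodes the unit laws. The associativity and inverse coherences are then expected to follow from analogous uses of uniqueness in the universal property of $\eta$ at each simplicial level.

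Once $R_\bullet$ is constructed, I set $X':=|R_\bullet|$, the colimit of $R_\bullet$ in the $\infty$-topos $\E$, with induced map $\eta'\colon X\to X'$. Since every groupoid object in an $\infty$-topos is effective, $\eta'$ is an effective epimorphism and the canonical map $R\to X\times_{X'}X$ is an equivalence, under which the reflexivity map $\eta\colon X\to R$ is identified with $\Delta\eta'\colon X\to X\times_{X'}X$. In particular, $\Delta\eta'$ is (equivalent to) the $L$-localization of $\Delta X$, and \cref{thm:charoflprimeloc} concludes that $\eta'$ is the desired $L'$-localization of $X$.

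The main obstacle I anticipate is the second step: rigorously assembling the pieces produced by \cref{lm:subsidlmforexistlprimeloc} and uniqueness of $\eta$ into a fully coherent (Segal) groupoid object of $\E$, working term-free. The universal property of $\eta$ delivers the composition, identity, and inverse maps individually without much pain; the real work is in verifying the higher coherences ($\infty$-categorical associativity, unitality, and invertibility) directly from the adjunction gymnastics encoded in \cref{lm:subsidlmforexistlprimeloc} and in the Appendix, rather than from strict diagrammatic identities as one would have in the classical setting.
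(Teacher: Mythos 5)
Your overall architecture is sound and genuinely different from the paper's: you want to realize $X'$ as the quotient $|R_\bullet|$ of a groupoid object with $R_0=X$ and $R_1=R$ the $L$-localization of $\Delta X$, and then use effectivity of groupoid objects (Giraud) to identify $X\times_{X'}X$ with $R$ before invoking \cref{thm:charoflprimeloc}. If such a groupoid object existed, the endgame would indeed work. But the step you flag as ``the main obstacle'' is a genuine gap, not a routine verification. In an $\infty$-topos a groupoid object is a full simplicial object $\Delta^{\op}\to\E$ satisfying the Segal and groupoid conditions; it is not determined by $R_0$, $R_1$, a composition map, and finitely many coherence homotopies. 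Your proposal produces the composition (correctly, via \cref{lm:subsidlmforexistlprimeloc} and the $L$-locality of $\prod_{\pr_{23}}(\ti{X\times r})^{(r\times X)}$) and gestures at ``analogous uses of uniqueness at each simplicial level,'' but gives no mechanism for assembling the infinite tower of coherence data into an actual functor out of $\Delta^{\op}$. Making the slogan ``all spaces of choices are contractible because the targets are $L$-local'' into such a functor, term-free, is precisely the hard content, and as written the proof does not exist yet.

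It is worth noting that the paper's proof is engineered to avoid exactly this problem. Instead of building a groupoid object by hand, it classifies $r$ by a map $\name{r}\colon X\times X\to\U$, transposes to $\name{r}^\sharp\colon X\to\U^X$, and takes the (effective epi, mono) factorization $X\xrightarrow{\eta'}X'\rightarrowtail\U^X$. The Čech nerve of $\eta'$ is then automatically a (coherent) groupoid object, the $L$-separatedness of $X'$ comes from \cref{lm:ULisLsep}, and the only remaining work is to identify $X\times_{X'}X$ with $R$ over $X\times X$ --- which the paper does by constructing a monomorphism $\phi'\colon t\rightarrowtail r$ via univalence and function extensionality and checking $\phi'\Delta\eta'=\eta$. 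In other words, the universe and the image factorization supply for free the coherent simplicial object your approach would need to construct explicitly; without either that device or a concrete recipe for the simplicial object, your argument is incomplete at its central step.
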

\begin{proof}
We prove the result for $Y=1$. Fix $X\in\E$ and let $\eta\colon\Delta X\rightarrow r$ be the $L$-reflection map of $\Delta X\in\E_{/X^2}$. Let $\kappa$ be a regular cardinal such that $r$ is relatively $\kappa$-compact and the relatively $\kappa$-compact $L$-local maps have a classifying map $u^\kappa_L\colon\ti{\U^\kappa_L}\rightarrow \U^\kappa_L$. Omitting $\kappa$ from our notation, we then have pullback squares
$$
\bfig
\square<400,350>[R`\ti{\U_L}`X\times X`\U_L;\ti{r}`r`u_L`\li{r}]
\place(70,280)[\angle]
\place(900,125)[\text{and}]
\square(1500,0)/{>}`{>}`{>}`{ >->}/<400,350>[\ti{\U_L}`\ti{\U}`\U_L`\U;`u_L`u`\iota]
\place(1570,280)[\angle]
\efig
$$ 
We denote the composite pullback square as
\begin{equation}
\label{eq:namersquare}
\bfig
\square<400,350>[R`\ti{\U}`X\times X`\U;\ti{r}`r`u`\name{r}]
\place(70,260)[\angle]
\efig 
\end{equation}
Let $(\eta',i)$ be the (effective epi,mono)-factorization of $\name{r}^\sharp\colon X\rightarrow\U^X$, the adjunct map to $\name{r}$. Set $X':=\cod(\eta')$. Note that, if $(\eta'_L,i_L)$ is the (effective epi,mono)-factorization of $\li{r}^\sharp$, then $\eta'=\eta'_L$ and $i=\iota^X\circ i_L$ since $\iota^X$ is a mono.

\smallskip
Our goal is to apply \cref{thm:charoflprimeloc} to $\eta'$. The map $\eta'$ is an effective epi by definition. To show that $X'$ is $L$-separated, note first that $\U_L$ is $L$-separated  by \cref{lm:ULisLsep}, hence so is $\U_L^X$, by \cref{prop:closureoflsepmapsunderpllbckdepprod} (1). Since $i$ is a mono, we have that $\Delta X'=(i_L\times i_L)^\ast (\Delta (\U_L^X))$, which implies that $X'$ is $L$-separated, because $L$-local maps are closed under pullbacks. It remains to show that $\Delta \eta'$ is the $L$-localization map of $\Delta X$. We can see $\Delta\eta'$ as a map $\Delta\eta'\colon\Delta X\rightarrow t$ in $\E_{/X^2}$, where $t$ is the pullback map $(\eta'\times\eta')^\ast(\Delta X')$ and it is therefore $L$-local. Hence, there is a unique map $\phi\colon r\rightarrow t$ with $\phi\eta=\Delta\eta'$ as maps in $\E_{/X^2}$. We will show that $\phi$ is an equivalence.

\smallskip
The strategy we adopt is to, first, construct a monomorphism $\phi'\colon t\rightarrowtail r$ and, then, show that $\phi'\phi\colon r\rightarrow r$ is an equivalence by showing that we have $\phi'\phi\eta=\eta$. This will imply that $\phi$ itself is an equivalence. Note that, by definition of $\phi$, showing that $\phi'\phi\eta=\eta$ is the same as showing that $\phi'\Delta\eta'=\eta$.
\smallskip
\paragraph{\textbf{Step 1. Construction of $\phi'$ and description of $\phi'\Delta\eta'$}} We construct $\phi'$ as a composite of some equivalences and a monomorphism. Consider the diagram:\begin{equation}
\label{eq:auxiliarydiagramforl'loc}
\tag{D}
\bfig
\node x(0,0)[X]
\node ux(800,0)[\U^X]
\node xux(1600,0)[X\times\U^X]
\node u(2250,0)[\U]
\node eq(2620,0)[\Eq_{\U}(\ti{\U})]
\node map(3100,-330)[M]
\node xx(0,-400)[X^2]
\node uxux(800,-400)[\U^X\times\U^X]
\node xuxux(1600,-400)[X\times\U^X\times\U^X]
\node uu(2250,-400)[\U\times\U]
\node xxx(800,-1200)[X^3]
\node w(2000,-1150)[W]
\node xx1(800,-800)[X^2]

\arrow[x`ux;\name{r}^\sharp]
\arrow[xux`ux;\pr_2]
\arrow[x`xx;\Delta X]
\arrow|r|[ux`uxux;\Delta\U^X]
\arrow|m|[xux`xuxux;\ X\times\Delta\U^X]
\arrow|b|/@{->}|!{(800,-800);(0,0)}\hole_<>(.53){\name{r}^\sharp\times\name{r}^\sharp}/[xx`uxux;]
\arrow||/@{->}|!{(800,-800);(1600,0)}\hole_<>(.53){\pr_{23}}/[xuxux`uxux;]
\arrow||/@{->}|(0.3){X\times\Delta X}/[xx1`xxx;]
\arrow||/@{->}|(0.65){\pr_2}/[xx1`x;]
\arrow|m|[xxx`xx;\pr_{23}]
\arrow||/@{->}|(0.3){X\times\name{r}^\sharp}/[xx1`xux;]
\arrow||/@{->}|!{(800,-800);(2000,-1150)}\hole|(0.6){X\times\name{r}^\sharp\times\name{r}^\sharp}/[xxx`xuxux;]
\arrow[xuxux`uu;\ev]
\arrow|a|[xux`u;\ev]
\arrow|l|[u`uu;\Delta\U]
\arrow|b|[u`eq;\simeq]
\arrow/@{->}|(0.4){\Eq_{\U}(u)}/[eq`uu;]
\arrow|m|[map`uu;(\id\times u)^{(u\times \id)}]
\arrow|a|/{@{ >->}@/^35pt/}/[u`map;j]
\arrow||/@{->}|!{(800,-800);(2000,-1150)}\hole|(0.75){(\name{r}\pr_{12},\name{r}\pr_{13})}/[xxx`uu;]
\arrow||/{@{ >->}@/^5pt/}/[eq`map;]
\arrow||/@{-->}^(0.65){\psi}/[xx1`w;]
\arrow|b|[w`xxx;(\ti{X\times r})^{(r\times X)}]
\arrow|r|[w`map;\sigma]

\place(400,-200)[\text{\tiny (1)}]
\place(1200,-200)[\text{\tiny (2)}]
\place(1950,-200)[\text{\tiny (3)}]
\place(700,-600)[\text{\tiny (4)}]
\place(700,-1000)[\text{\tiny (5)}]
\place(1300,-550)[\text{\tiny (6)}]
\place(1870,-1000)[\text{\tiny (7)}]

\place(1350,-20)[\begin{rotate}{-90}\angle\end{rotate}]
\place(850,-650)[\begin{rotate}{140}\angle\end{rotate}]
\place(700,-740)[\begin{rotate}{230}\angle\end{rotate}]
\place(2050,-1030)[\begin{rotate}{140}\angle\end{rotate}]
\place(850,-1050)[\begin{rotate}{140}\angle\end{rotate}]
\efig
\end{equation}
The maps labelled as $\ev$ are appropriate counits of product $\dashv$ internal-hom adjunctions. We proceed to explain this diagram, show how it defines $\phi'$, and give a description of $\phi'\Delta\eta'$.

\smallskip
\noindent (i) Recall that $\Delta\eta'$ is a map $\Delta X\rightarrow t$ in $\E_{/X^2}$, and one can show that $t$ is the pullback map of the cospan in (1) of \eqref{eq:auxiliarydiagramforl'loc}. Because of this, the square (1) determines $\Delta\eta'$.

\smallskip
\noindent (ii) Thanks to Function Extensionality (\cref{prop:funext}), $\Delta\U^X\simeq\Pi_{\pr_{23}}\ev^{\ast}(\Delta\U)$. Hence,
$t\simeq (\name{r}^\sharp \times\name{r}^\sharp)^\ast\left(\Pi_{\pr_{23}}\ev^{\ast}(\Delta\U)\right).$

\smallskip
\noindent (iii) Since the bottom square (5) in \eqref{eq:auxiliarydiagramforl'loc} is a pullback, we can use the Beck-Chevalley condition (\cref{lm:BCcond}) to get an equivalence
$t\simeq \Pi_{\pr_{23}}(\name{r}\pr_{12},\name{r}\pr_{13})^\ast(\Delta\U)$.
Since the pullback of $X\times\Delta \U^X$ along $X\times\name{r}^\sharp\times\name{r}^\sharp$ is $X\times t$, the square (6) in \eqref{eq:auxiliarydiagramforl'loc} determines the map $X\times\Delta\eta'\colon X\times\Delta X\rightarrow X\times t$ in $\E_{/X^3}$. It follows that the map $X\times\Delta X\rightarrow (\name{r}\pr_{12},\name{r}\pr_{13})^\ast(\Delta\U)$ determined by the square given as the composite of (3) and (6) is the adjunct of the composite map
$$
\Delta X\xrightarrow{\Delta\eta'}t\simeq \prod_{\pr_{23}}(\name{r}\pr_{12},\name{r}\pr_{13})^\ast(\Delta\U)$$
\smallskip
\noindent (iv) We now consider the map $j$ in $\E_{/\U^2}$ displayed in the top-right corner of \eqref{eq:auxiliarydiagramforl'loc}. Here, $M$ is simply a name for the domain of the map $(\id\times u)^{(u\times\id)}$. The map $j$ is defined as the composite of the equivalence $\Delta\U\simeq\Eq_{\U}(u)$, given by univalence, and the monomorphism $\Eq_{\U}(u)\rightarrowtail (\id\times u)^{(u\times \id)}$. Thus, $j$ is a mono as well. Using the fact that (7) in \eqref{eq:auxiliarydiagramforl'loc} is a pullback square, we obtain a monomorphism
$$\prod_{\pr_{23}}(\name{r}\pr_{12},\name{r}\pr_{13})^\ast(\Delta\U)\xrightarrowtail{\prod\limits_{\pr_{23}}(\name{r}\pr_{12},\name{r}\pr_{13})^\ast(j)} \prod_{\pr_{23}}(\ti{X\times r})^{(r\times X)}.$$ 
Here, $\ti{X\times r}$ is the pullback map $\pr_{13}^\ast(r)=(\tau\times X)(X\times r)$, where $\tau\colon X^2\simeq X^2$ is the swapping equivalence, and $W$ is simply a name for the domain of the map $(\ti{X\times r})^{(r\times X)}$. Note that the map displayed above is indeed a monomorphism because, being right adjoints, pullback and dependent-product functors preserve monomorphisms. Therefore, we get a composite monomorphism
$$t\xrightarrowtail{\ \ } \prod_{\pr_{23}}(\ti{X\times r})^{(r\times X)}.$$
The map $\psi$ in $\E_{/X^3}$ given in \eqref{eq:auxiliarydiagramforl'loc} is determined, as a map $X\times\Delta X\rightarrow (\ti{X\times r})^{(r\times X)}$, by the composite of the squares (3) and (6) with the 2-simplex representing the map $j\colon \Delta(\U)\rightarrowtail (\id\times u)^{(u\times\id)}$. It follows that $\psi$ is the adjunct to the composite
$$
\Delta X\xrightarrow{\Delta\eta'}t\xrightarrowtail{\ \ } \prod_{\pr_{23}}(\ti{X\times r})^{(r\times X)}
$$
This means that this latter map is the composite
$$
\Delta X\xrightarrow{\gamma}\prod_{\pr_{23}} X\times\Delta X\xrightarrow{\prod\limits_{\pr_{23}}\psi}\prod_{\pr_{23}}(\ti{X\times r})^{(r\times X)},
$$
where $\gamma$ is the unit of the adjunction $\pr_{23}^\ast\dashv \prod_{\pr_{23}}$ at $\Delta X$.

\smallskip
\noindent (v) Since $\ti{X\times r}=\pr_{13}^\ast(r)$, $\ti{X\times r}$ is $L$-local. Hence, because $\eta\times X\colon\Delta X\times X\rightarrow r\times X$ is an $L$-localization map (it is the pullback along $\pr_{12}$ of $\eta$), we have an equivalence $$(\ti{X\times r})^{(\eta\times X)}\colon(\ti{X\times r})^{(r\times X)}\xrightarrow{\simeq}(\ti{X\times r})^{(\Delta X\times X)}.$$
Whence, we have a composite monomorphism
$$t\xrightarrowtail{\ \ } \prod_{\pr_{23}}(\ti{X\times r})^{(r\times X)}\simeq\prod_{\pr_{23}}(\ti{X\times r})^{(\Delta X\times X)}.$$
\smallskip
\noindent (vi) Finally, we have an equivalence $\beta\colon r\xrightarrow{\simeq}\prod_{\pr_{23}}(\ti{X\times r})^{(\Delta X\times X)}$
as in \cref{lm:subsidlmforexistlprimeloc}. Composing the monomorphism obtained in (v) with the inverse of $\beta$ we obtain the needed monomorphism $\phi'\colon t\rightarrowtail r$. Using what we found in (iv) above, the composite $\phi'\Delta\eta'$ is then given as the composite
$$
\Delta X\xrightarrow{\gamma}\prod_{\pr_{23}} X\times\Delta X\xrightarrow{\prod\limits_{\pr_{23}}\psi}\prod_{\pr_{23}}(\ti{X\times r})^{(r\times X)}\xrightarrow{\simeq}r,
$$
where the displayed equivalence is $\beta^{-1}\prod\limits_{\pr_{23}}(\ti{X\times r})^{(\eta\times X)}$.
\paragraph{\textbf{Step 2. Proof that $\phi'\Delta\eta'=\eta$.}} By the work above, it suffices to show that the maps
$$
\Delta X\xrightarrow{\eta}r\xrightarrow[\ \ \simeq\ \ ]{\ \ \beta\ \ }\prod_{\pr_{23}}(\ti{X\times r})^{(\Delta X\times X)}
$$
and
$$
\Delta X\xrightarrow{\gamma}\prod_{\pr_{23}}X\times \Delta X\xrightarrow{\prod\limits_{\pr_{23}}\psi}\prod_{\pr_{23}}(\ti{X\times r})^{(r\times X)}\xrightarrow[\simeq]{\prod\limits_{\pr_{23}}(\ti{X\times r})^{(\eta\times X)}}\prod_{\pr_{23}}(\ti{X\times r})^{(\Delta X\times X)}
$$
are equal in $\E_{/X^2}$. By \cref{lm:subsidlmforexistlprimeloc} (ii), there is a map $\rho\colon\Delta X\rightarrow \prod\limits_{\pr_{23}}(\ti{X\times r})^{(r\times X)}$ making the following diagram commute in $\E_{/X^2}$
$$
\bfig
\square/{-->}`{>}`{>}`{>}/<800,500>[\Delta X`\prod\limits_{\pr_{23}}(\ti{X\times r})^{(r\times X)}`r`\prod\limits_{\pr_{23}}(\ti{X\times r})^{(\Delta X\times X)};\rho`\eta`\prod\limits_{\pr_{23}}(\ti{X\times r})^{(\eta\times X)}`\beta]
\efig
$$
Thus, we only need to show that $\rho=\left(\prod_{\pr_{23}}\psi\right)\gamma$. Equivalently, we can show that the adjunct maps $\rho ',\psi\colon (X\times\Delta X)\rightarrow (\ti{X\times r})^{(r\times X)}$ are equal in $\E_{/X^3}$. Since the square (7) in the diagram (\ref{eq:auxiliarydiagramforl'loc}) is a pullback, we only need to show that $\rho'$ and $\psi$ are equal after composing with $g:=(\name{r}\pr_{12},\name{r}\pr_{13})$ and $$\sigma\colon g (\ti{X\times r})^{(r\times X)}\rightarrow (\id\times u)^{(u\times\id)},$$ that is, as maps $g(X\times \Delta X)\rightarrow (\id\times u)^{(u\times\id)}$. Finally, we can further show that $\sigma\rho',\ \sigma\psi$ are equal in $\E_{/\U^2}$ by showing their adjuncts along the adjunction $(-)\times^{\U^2}(u\times\id)\dashv (-)^{(u\times\id)}$ are equal.

\smallskip
In order to describe the adjunct of $\sigma\rho'$, we use \cref{lm:adjointopllbckofhommaps} with $f=X\times\Delta X$, $g:=(\name{r}\pr_{12},\name{r}\pr_{13})$, $p=\id\times u$ and $q=u\times\id$. Consequently, $g^\ast q=r\times X$, $g^\ast p=\ti{X\times r}$ and the adjunct of $\sigma\rho'$ is given as the composite map $$g((X\times\Delta X)\times^{X^3} (r\times X))\xrightarrow{\rho^\sharp} g(\ti{X\times r}) =gg^\ast (\id\times u)\xrightarrow{\epsilon_{(\id\times u)}}\id\times u$$
Recall the pullback square \eqref{eq:namersquare} defining $\name{r}$. Since $(X\times\Delta X)\times^{X^3} (r\times X)=(X\times\Delta X)r$, using the proof of \cref{lm:subsidlmforexistlprimeloc} and the fact that $g^\ast (\id\times u)=\ti{X\times r}$, we have that $\rho^\sharp\colon(X\times\Delta X)r\rightarrow \ti{X\times r}$ and $\epsilon_{(\id\times u)}\colon g(\ti{X\times r})\rightarrow \id\times u$ are described by the two squares below
$$
\bfig
\node r(0,0)[R]
\node xx(0,-300)[X^2]
\node xr(600,0)[X\times R]
\node xxx(600,-300)[X^3]

\node xr1(1300,0)[X\times R]
\node xxx1(1300,-300)[X^3]
\node uut(2700,0)[\U\times\ti{\U}]
\node uu(2700,-300)[\U^2]

\arrow|a|[r`xr;\rho^\sharp=(r_2,\id)]
\arrow|l|[r`xx;r]
\arrow|r|[xr`xxx;\ti{X\times r}]
\arrow|b|[xx`xxx;X\times \Delta X]
\arrow|m|[r`xxx;(r_1,r_2,r_2)]

\arrow|a|[xr1`uut;\epsilon_{(\id\times u)}=(\name{r}(r_1\pr_2,\pr_1),\ti{r}\pr_2)]
\arrow|l|[xr1`xxx1;\ti{X\times r}]
\arrow|r|[uut`uu;\id\times u]
\arrow|b|[xxx1`uu;g=(\name{r}\pr_{12},\name{r}\pr_{13})]

\place(1370,-90)[\angle]
\efig
$$
Hence, the composite $\epsilon_{(\id\times u)}\rho^\sharp$ (the adjunct of $\sigma \rho'$ in $\E_{/\U^2}$) is given by the map
\begin{equation}
\label{eq:adjtosigrho}
\bfig
\node r(0,0)[R]
\node uut(700,0)[\U\times\ti{\U}]
\node uu(350,-300)[\U^2]

\arrow|a|[r`uut;(\name{r}r,\ti{r})]
\arrow|l|[r`uu;(\name{r},\name{r})r]
\arrow|r|[uut`uu;\id\times u]
\efig
\end{equation}
To describe the adjunct of $\sigma \psi$, note that, from the squares (6), (7) and (3) and the definition of $j$ in the diagram \eqref{eq:auxiliarydiagramforl'loc}, $\sigma\psi$ is given as the map in $\E_{/\U^2}$ described by the diagram
$$
\bfig
\node xx(0,0)[X^2]
\node u(900,0)[\U]
\node xxx(0,-350)[X^3]
\node uu(900,-350)[\U^2]
\node m(1500,0)[M]

\arrow[xx`u;\name{r}]
\arrow[u`m;j]
\arrow[xx`xxx;X\times\Delta X]
\arrow|m|[u`uu;\Delta \U]
\arrow|b|[xxx`uu;g=(\name{r}\pr_{12},\name{r}\pr_{13})]
\arrow|m|[xx`uu;(\name{r},\name{r})]
\arrow|r|[m`uu;(\id\times u)^{(u\times\id)}]
\efig
$$
Then, the adjunct of $j\name{r}$ in $\E_{/\U^2}$ is the composite $j^\sharp (\name{r}\times^{\U^2}(u\times\id))$, where $j^\sharp\colon\Delta \U\times^{\U^2}(u\times\id)\rightarrow \id\times u$ is the adjunct of $j$. Using that there are pullback squares
$$
\bfig
\node r(0,0)[R]
\node utu(600,0)[\ti{\U}\times\U]
\node xx(0,-300)[X^2]
\node uu(600,-300)[\U^2]

\node ut(1200,0)[\ti{\U}]
\node utu1(1700,0)[\ti{\U}\times\U]
\node u(1200,-300)[\U]
\node uu1(1700,-300)[\U^2]

\arrow|a|[r`utu;(\ti{r},\name{r}r)]
\arrow|l|[r`xx;r]
\arrow|r|[utu`uu;u\times\id]
\arrow|b|[xx`uu;(\name{r},\name{r})]

\arrow|a|[ut`utu1;(\id,u)]
\arrow|l|[ut`u;u]
\arrow|r|[utu1`uu1;u\times\id]
\arrow|b|[u`uu1;\Delta\U]

\place(70,-70)[\angle]
\place(1270,-70)[\angle]
\efig
$$ 
we get that $j^\sharp (\name{r}\times^{\U^2}(u\times\id))$ is the composite map
\begin{equation}
\label{eq:adjtosigpsi}
\bfig
\node r(0,0)[R]
\node ut(550,0)[\ti{\U}]
\node uut(1150,0)[\U\times\ti{\U}]
\node uu(550,-320)[\U^2]

\node r1(2100,0)[R]
\node uut1(2700,0)[\U\times\ti{\U}]
\node uu1(2400,-320)[\U^2]

\arrow|a|[r`ut;\ti{r}]
\arrow|a|[ut`uut;j^\sharp=(u,\id)]
\arrow|l|[r`uu;(\name{r},\name{r})r]
\arrow|m|[ut`uu;(u,u)]
\arrow|r|[uut`uu;\id\times u]

\arrow|a|[r1`uut1;(u\ti{r},\ti{r})]
\arrow|l|[r1`uu1;(\name{r},\name{r})r]
\arrow|r|[uut1`uu1;\id\times u]

\place(1500,-175)[=]
\efig
\end{equation}
One can now see that the maps \eqref{eq:adjtosigrho} and \eqref{eq:adjtosigpsi} are equal by using the square \eqref{eq:namersquare} defining $\name{r}$ (including the implicit given homotopies). Our proof is then complete.
\end{proof}
Once we know that every map in $\E$ has an $L'$-localization, we can also show that $L'$-localization form a reflective subfibration on $\E$. The crucial point here is to show pullback-compatibility of $L'$-reflections. This is necessary when working in higher topos theory, but it is superfluous in homotopy type theory as reflections are automatically stable under pullbacks in that setting.
\begin{corollary}
\label{cor:l'reflsubf}
Given any reflective subfibration $L_\bullet$ of an $\infty$-topos $\E$, there exists a reflective subfibration $L'_\bullet$ of $\E$ such that the $L'$-local maps are exactly the $L$-separated maps. Furthermore, if $L_\bullet$ is a modality, then so is $L'_\bullet$.
\end{corollary}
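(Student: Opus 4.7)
My plan is to combine \cref{thm:existoflprimeloc} with the diagonal characterization of $L'$-localizations in \cref{thm:charoflprimeloc} to verify the reflective-subfibration axioms. For each $X\in\E$, let $\D'_X\subseteq\E_{/X}$ denote the full subcategory spanned by the $L$-separated maps over $X$. Applying \cref{thm:existoflprimeloc} in the ambient $\infty$-topos $\E_{/X}$ (via \cref{rmk:localcharacterofreflsubf}) furnishes an $L'$-localization $\eta'_X(p)\colon p\to L'_X(p)$ for every $p\in\E_{/X}$, which exhibits $\D'_X$ as a reflective subcategory of $\E_{/X}$ with reflector $L'_X$.

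The substantive step is pullback compatibility. Given $f\colon X\to Y$ in $\E$ and $p\colon E\to Y$ in $\E_{/Y}$, I would show directly that $f^\ast(\eta'_Y(p))\colon f^\ast p\to f^\ast L'_Y(p)$ is an $L'$-localization of $f^\ast p$. Its codomain is $L$-separated by \cref{prop:closureoflsepmapsunderpllbckdepprod}(1), so by \cref{thm:charoflprimeloc} it suffices to check (i) that $f^\ast(\eta'_Y(p))$ is an effective epimorphism and (ii) that its diagonal is an $L$-localization of $\Delta(f^\ast p)$. Point (i) follows from the pullback stability of effective epimorphisms in an $\infty$-topos. For (ii), both $\Delta(f^\ast p)$ and $\Delta(f^\ast \eta'_Y(p))$ are, by a direct computation in slices, pullbacks of $\Delta p$ and $\Delta\eta'_Y(p)$, respectively, along the canonical map $f^\ast E\times_X f^\ast E\to E\times_Y E$. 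Since $\Delta\eta'_Y(p)$ is itself an $L$-localization of $\Delta p$ (by applying the implication (1)$\Rightarrow$(2) of \cref{thm:charoflprimeloc} to $\eta'_Y(p)$), the pullback-compatibility axiom of $L_\bullet$ (\cref{def:refsub}(1b)) transfers this property to the pullback.

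If in addition $L_\bullet$ is a modality, suppose $p\colon E\to X$ and $q\colon X\to Y$ are $L$-separated. Then $\Delta(qp)\colon E\to E\times_Y E$ factors as $\Delta p\colon E\to E\times_X E$ followed by the natural map $E\times_X E\to E\times_Y E$. The latter is the pullback of $\Delta q$ along $p\times p\colon E\times_Y E\to X\times_Y X$, hence $L$-local, and $\Delta p$ is $L$-local by hypothesis. Composing, $\Delta(qp)$ is $L$-local, so $qp$ is $L$-separated.

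The only non-routine step is the pullback-compatibility verification, and it is made tractable precisely by the diagonal characterization of \cref{thm:charoflprimeloc}: a direct argument through the universal property of \cref{def:l'locmap} would instead require controlling mapping spaces into arbitrary $L$-separated maps under base change, which would be considerably more awkward and not obviously available without further tools.
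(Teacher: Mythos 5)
Your proposal is correct and follows essentially the same route as the paper: reflectivity of each $\D'_X$ from \cref{thm:existoflprimeloc}, pullback-compatibility via \cref{thm:charoflprimeloc} (effective epi by pullback stability, $L$-separated codomain by \cref{prop:closureoflsepmapsunderpllbckdepprod}, and the diagonal of the pulled-back map identified as a base change of $\Delta\eta'_Y(p)$ so that \cref{def:refsub}(1b) applies), and the same factorization $\Delta(qp)=(\text{pullback of }\Delta q)\circ\Delta p$ for the modality claim. The only difference is that the paper writes out the ``direct computation in slices'' explicitly as a pasting of pullback cubes, which you would need to supply in a full write-up.
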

\begin{proof}
Let $\D'$ be the full subcategory of $\E$ spanned by the $L$-separated objects and let $\iota\colon\D'\rightarrow\E$ be the inclusion functor. \cref{thm:existoflprimeloc} constructs, for every $X\in\E$, an $L'$-localization map $\eta(X)\colon X\rightarrow L'(X)$. By definition of $L'$-localization map, this means that, for every $X\in\E$, the $\infty$-category defined as the pullback 
$$
\bfig
\node xd(0,0)[X_{/\D'}]
\node d(400,0)[\D']
\node xe(0,-250)[X_{/\E}]
\node e(400,-250)[\E]

\arrow[xd`d;]
\arrow[xe`e;]
\arrow[xd`xe;]
\arrow|r|[d`e;\iota]

\place(80,-80)[\angle]
\efig
$$
has an initial object. By \cite[\S 17.4]{notesonqcat}, $\iota$ has a left adjoint $L'\colon\E\rightarrow\D'$, i.e., $\D'$ is a reflective subcategory of $\E$. The same construction performed on each slice category now gives that, for every $X\in\E$, the full subcategory $\D'_X$ of $\E_{/X}$ on the $L$-separated $p\in\E_{/X}$ is reflective. Since $L$-separated maps are closed under pullbacks (see \cref{prop:closureoflsepmapsunderpllbckdepprod}), we obtain a system of reflective subcategories $L'_\bullet$ on $\E$. To conclude that we actually get a reflective subfibration, we only need to verify that the $L'$-reflection maps are compatible with pullbacks.\par
Let then $p\colon E\rightarrow X$ be an object in $\E_{/X}$ and $f\colon Y\rightarrow X$ a map in $\E$. Let 
$$
\bfig
\Vtriangle|alr|<300,250>[E`E'`X;\eta':=\eta'_X(p)`p`p']
\efig
$$
be the $L'$-localization of $p$. We need to show that $m:= f^\ast (\eta')\colon f^\ast (p)\rightarrow f^\ast (p')$ is the $L'$-localization of $f^\ast (p)$ in $\E_{/Y}$. To do so we use \cref{thm:charoflprimeloc}. Set $f^\ast(E):=Y\times_X E$, $q:=f^\ast (p)$ and $f^\ast(E'):=Y\times_X E'$. Since $\eta'$ is an effective epimorphism and effective epimorphisms are closed under pullbacks, an application of the pasting lemma for pullbacks show that $m$ is also an effective epimorphism. By \cref{prop:closureoflsepmapsunderpllbckdepprod} (1), $f^\ast (p')$ is $L$-separated. Therefore, we only need to show that $\Delta (m)$, as a map in $\E_{/f^\ast(E)\times_Y f^\ast(E)}$, is the $L'$-localization map of $\Delta q$. In $\E_{/Y}$ we have the pullback square (products are products in $\E_{/Y}$)
$$
\bfig
\square<800,350>[q\times_{f^\ast (p')}q`f^\ast (p')`q\times q`f^\ast (p')\times f^\ast (p');`(m\times m)^\ast (\Delta(f^\ast (p'))`\Delta(f^\ast (p'))`m\times m]
\place(100,230)[\angle]
\efig
$$
and $\Delta m$ is a map $\Delta q\rightarrow (m\times m)^\ast (\Delta(f^\ast (p'))$ in $\left(\E_{/Y}\right)_{/(q\times q)}$. Since $\left(\E_{/Y}\right)_{/(q\times q)}\simeq\E_{/f^\ast(E)\times_Y f^\ast(E)}$ and $m=f^\ast (\eta')$, one can see that $\Delta m$ is the map
$$
\bfig
\Vtriangle|amm|<500,350>[f^\ast(E)`f^\ast(E)\times_{f^{\ast}(E')}f^\ast(E)`f^\ast(E)\times_Y f^\ast(E);\Delta m`\Delta q`t]
\efig
$$
in $\E_{/f^\ast(E)\times_Y f^\ast(E)}$, where $t$ corresponds to the map $(m\times m)^\ast (\Delta(f^\ast (p'))$ above. Similarly, $\Delta\eta'$ is a map $\Delta p\rightarrow s$ in $\E_{/E\times_X E}$ (where $s$ is a suitable pull-backed map) and it is the $L$-localization of $\Delta p$ by \cref{thm:charoflprimeloc}. We want to show that $\Delta m$ is a pullback of this $L$-localization and conclude because $L_\bullet$ is a reflective subfibration.
Let $g\colon f^\ast(E)\rightarrow E$ and $g'\colon f^\ast(E')\rightarrow E'$ be the projection maps.
As in the proof of \cref{prop:closureoflsepmapsunderpllbckdepprod}, we see that the following are all pullback squares in $\E$
$$
\bfig
\square(-600,0)<800,250>[f^\ast(E)\times_Y f^\ast(E)`E\times_X E`f^\ast(E)`E;```g]
\square(1000,0)<800,250>[f^\ast(E')\times_Y f^\ast(E')`E'\times_X E'`f^\ast(E')`E';```g']
\place(-500,150)[\angle]
\place(1100,150)[\angle]
\efig
$$
$$
\bfig
\square(-600,0)<800,300>[f^\ast(E)`E`f^\ast(E)\times_Y f^\ast(E)`E\times_X E;g`\Delta(f^\ast(p))`\Delta p`]
\square(1000,0)<800,300>[f^\ast(E')`E'`f^\ast(E')\times_Y f^\ast(E')`E'\times_X E';g'`\Delta(f^\ast(p'))`\Delta p'`]
\place(-500,200)[\angle]
\place(1100,200)[\angle]
\efig
$$
Then in the diagram
$$
\bfig
\cube|alll|<900,800>[f^\ast(E)\times_{f^\ast(E')}f^\ast(E)`E\times_{E'}E`f^\ast(E)\times_Y f^\ast(E)`E\times_{X}E;`t`s`]%
(600,-300)|bmrb|<900,800>[f^\ast(E')`E'`f^\ast(E')\times_Y f^\ast(E')`E'\times_X E';g'`\Delta (f^\ast(p'))`\Delta p'`]%
[``m\times_Y m`\eta'\times_X\eta']
\efig
$$
the left and right sides are pullbacks (by definition of $t$ and $s$) and the front square is a pullback by the above. Therefore, the back square is also a pullback. A final application of the pasting lemma now shows that there are pullback squares in $\E$
$$
\bfig
\hSquares(0,0)%
<300>[f^\ast(E)`f^\ast(E)\times_{f^\ast(E')} f^\ast(E)`f^\ast(E)\times_{Y} f^\ast(E)`E`E\times_{E'}E`E\times_X E;\Delta m`t`g```\Delta \eta'`s]
\place(100,180)[\angle]
\place(1200,180)[\angle]
\efig
$$
completing the proof that $L'_\bullet$ is a reflective subfibration.\par
The final claim about $L'$ being a modality when $L$ is follows from the observation that, given composable maps $f\colon X\rightarrow Y$ and $g\colon Y\rightarrow Z$ in $\E$, we have $\Delta(gf)=p\Delta f$, where $p$ is the leftmost vertical map in 
$$
\bfig
\hSquares(0,0)%
<300>[X\times_Y X`X`Y`X\times_Z X`X\times_Z Y`Y\times_Z Y;`f```\Delta g`\id_X\times_Z f`f\times_Z\id_Y]
\place(70,200)[\angle]
\place(1080,200)[\angle]
\efig
$$ 
Therefore, if $g$ is $L$-separated (so that $\Delta g$ is $L$-local), $p$ is $L$-local. If also $f$ is $L$-separated and $L$ is a modality, we can then conclude from $\Delta(gf)=p\Delta f$ that $gf$ is $L$-separated.
\end{proof}
\section{Appendix: On locally cartesian closed $\infty$-categories}\label{AppA}
We prove here some miscellaneous facts about locally cartesian closed (lcc) $\infty$-categories that we need but we could not fit elsewhere. Some of these results are well-known, but others do not seem to appear or be proven in the literature.\par
In \cref{sec:pullbackfunctandadj}, we discuss some results about cartesian-closedness of pullback functors, and some interactions between their adjoints. In \cref{sec:funext}, we give a ``term-free" version of the type-theoretic axiom known as \emph{function extensionality}, and we prove that it holds in any lcc $\infty$-category. Finally, in \cref{subsec:contractibility}, we prove a ``fiberwise" criterion for extending a map along another one with the same domain.

\smallskip
We fix throughout an lcc $\infty$-category $\C$. 

\subsection{Pullback functor and its adjoints}\label{sec:pullbackfunctandadj} The first set of results we need explore the behaviours of the pullback functors and of their adjoints in $\C$.
\begin{lemma}
\label{lm:pllbckcartclosedfunct}
Let $\C$ be a locally cartesian closed $\infty$-category. For any morphism $g\colon Y\rightarrow X$ in $\C$ the pullback functor
$
g^\ast\colon\C_{/X}\rightarrow\C_{/Y}
$
is cartesian closed, i.e., for every $p,q\in\C_{/X}$, $g^\ast\left(p^q\right)$ is the exponential object $g^\ast (p)^{g^\ast (q)}$ in $\C_{/Y}$.
\end{lemma}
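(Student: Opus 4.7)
The plan is to prove this via the Yoneda lemma in $\C_{/Y}$, by showing that $g^\ast(p^q)$ corepresents the same functor as the exponential $g^\ast(p)^{g^\ast(q)}$. This reduces the claim to a version of the Frobenius/projection formula for $\Sigma_g$, which is straightforward to verify directly from the construction of products in slice categories.

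Concretely, first I would fix an arbitrary test object $r\in\C_{/Y}$ and compute the mapping space $\C_{/Y}(r,g^\ast(p^q))$. Using the adjunction $\Sigma_g\dashv g^\ast$ and the defining exponential adjunction in $\C_{/X}$, one gets
$$\C_{/Y}(r,g^\ast(p^q))\simeq\C_{/X}(\Sigma_g r,p^q)\simeq\C_{/X}(\Sigma_g r\times^X q,p).$$
On the other side, one has
$$\C_{/Y}(r,g^\ast(p)^{g^\ast(q)})\simeq\C_{/Y}(r\times^Y g^\ast(q),g^\ast(p))\simeq\C_{/X}(\Sigma_g(r\times^Y g^\ast(q)),p).$$
So it is enough to produce, naturally in $r$, an equivalence
$$\Sigma_g(r\times^Y g^\ast(q))\simeq\Sigma_g(r)\times^X q$$
in $\C_{/X}$; then Yoneda will assemble these natural equivalences into the desired equivalence $g^\ast(p^q)\simeq g^\ast(p)^{g^\ast(q)}$ in $\C_{/Y}$.

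To prove this projection formula, write $r\colon R\rightarrow Y$ and $q\colon Q\rightarrow X$. Then $g^\ast(q)$ is the pullback $Y\times_X Q\rightarrow Y$, and the product $r\times^Y g^\ast(q)$ in $\C_{/Y}$ is computed as the iterated pullback $R\times_Y(Y\times_X Q)\simeq R\times_X Q$ (where $R$ maps to $X$ via $g\circ r$), with structure map to $Y$ the first projection followed by $r$. Applying $\Sigma_g$ composes this with $g$, giving the map $R\times_X Q\rightarrow X$. On the other hand $\Sigma_g(r)=g\circ r$, and its product with $q$ in $\C_{/X}$ is by definition the pullback $R\times_X Q\rightarrow X$. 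The two constructions agree, essentially by associativity/pasting for pullbacks.

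The main thing to be careful about is the $\infty$-categorical bookkeeping: the chain of equivalences used above must be natural in $r$ at the level of mapping spaces (not merely on homotopy categories), and the projection formula likewise must be realized as an equivalence in $\C_{/X}$ coherent in $r$. These all follow from the universal properties of the adjunctions $\Sigma_g\dashv g^\ast$ and of exponentials, together with the fact that the iterated pullback $R\times_Y(Y\times_X Q)\simeq R\times_X Q$ is a coherent pullback pasting, so no extra work beyond applying Yoneda is needed.
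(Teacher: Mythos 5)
Your proposal is correct and is essentially the argument the paper relies on: the paper gives no proof of its own, simply citing the $1$-categorical statement (Johnstone, \emph{Sketches of an Elephant}, Lemma A.1.5.2) and asserting that the proof carries over, and that cited proof is precisely your combination of the adjunctions $\Sigma_g\dashv g^\ast$ and $(-)\times^X q\dashv(-)^q$ with the projection formula $\Sigma_g(r\times^Y g^\ast q)\simeq\Sigma_g(r)\times^X q$ coming from pullback pasting. Your closing remarks on naturality correctly identify the only point where the $\infty$-categorical setting demands care, so nothing is missing.
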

A proof of the above result for $1$-categories can be found in \cite[Lemma A.1.5.2]{sketches} and the same proof carries over to $\infty$-categories.
\begin{lemma}
\label{lm:adjointopllbckofhommaps}
Let $\epsilon\colon gg^\ast\rightarrow \id_{\C_{/X}}$ be the counit of the adjunction $g\circ (-)\dashv g^\ast$. Given $X\in\C$, take $p,q\in\C_{/X}$. Suppose given a diagram  in $\C$
$$
\bfig
\node a(0,0)[A]
\node w(400,-100)[W]
\node t(1000,-100)[T]
\node y(400,-450)[Y]
\node x(1000,-450)[X]

\arrow|a|[a`w;\rho]
\arrow|a|[w`t;\sigma:=\epsilon_{p^q}]
\arrow|l|/{@{>}@/_1em/}/[a`y;f]
\arrow|m|[w`y;(g^\ast p)^{(g^\ast q)}]
\arrow|r|[t`x;p^q]
\arrow|b|[y`x;g]

\place(470,-170)[\angle]
\efig
$$
Let $\rho^\sharp\colon f\times^Y g^\ast q\rightarrow g^\ast p$ be the adjunct to $\rho$ in $\C_{/Y}$ and consider the map $\sigma \rho\colon gf\rightarrow ~p^q$ in $\C_{/X}$. Then, $g(f\times^Y g^\ast q)=gf\times^X q$ and the adjunct of $\sigma \rho$ is given by the composite map:
$
g(f\times^Y g^\ast q)\xrightarrow{\rho^\sharp} gg^\ast p\xrightarrow{\epsilon_p}p.
$
\end{lemma}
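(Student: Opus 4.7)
My plan is as follows. The lemma has two claims. The identification $g(f\times^Y g^\ast q)=gf\times^X q$ is a pullback-pasting argument: since $g^\ast q$ is the pullback of $q$ along $g$, and $f\times^Y g^\ast q$ is the pullback over $Y$ of $f$ along $g^\ast q$, pasting these two pullback squares exhibits $g(f\times^Y g^\ast q)$ as the pullback over $X$ of $gf$ along $q$, which is $gf\times^X q$ by definition. This also identifies $g(\rho^\sharp)$ with a map from $gf\times^X q$ as needed.

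For the adjunct formula, my strategy is to exploit that by the very definition of $\sigma=\epsilon_{p^q}$, the map $\sigma\rho\colon gf\to p^q$ in $\C_{/X}$ is precisely the $(g\circ-\dashv g^\ast)$-adjunct of $\rho\colon f\to g^\ast(p^q)$; here $g^\ast(p^q)$ is identified with $(g^\ast p)^{g^\ast q}$ via \cref{lm:pllbckcartclosedfunct}, so that $\rho$ is matched with the map denoted by the same letter. Writing $\ev_p\colon p^q\times^X q\to p$ and $\ev_{g^\ast p}\colon (g^\ast p)^{g^\ast q}\times^Y g^\ast q\to g^\ast p$ for the respective exponential evaluations, I would then compute the adjunct of $\sigma\rho$ under $(-)\times^X q\dashv (-)^q$ as
\[
\ev_p\circ\bigl((\sigma\rho)\times^X q\bigr)=\ev_p\circ(\epsilon_{p^q}\times^X q)\circ(g\rho\times^X q).
\]
Using the first part of the lemma (applied with $\rho$ in place of $f$), the rightmost factor is $g(\rho\times^Y g^\ast q)$. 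The crucial step is then to rewrite $\ev_p\circ(\epsilon_{p^q}\times^X q)$ as $\epsilon_p\circ g(\ev_{g^\ast p})$, which would yield the desired identity $\epsilon_p\circ g\bigl(\ev_{g^\ast p}\circ(\rho\times^Y g^\ast q)\bigr)=\epsilon_p\circ g(\rho^\sharp)$.

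I expect the main obstacle to be establishing the compatibility $\ev_p\circ(\epsilon_{p^q}\times^X q)=\epsilon_p\circ g(\ev_{g^\ast p})$. The approach is to compare $(g\circ -\dashv g^\ast)$-adjuncts of both sides in $\C_{/Y}$: the right-hand side is tautologically adjunct to $\ev_{g^\ast p}$, and the left-hand side is adjunct to $g^\ast(\ev_p)$ under the canonical identification $g^\ast(p^q\times^X q)\simeq g^\ast(p^q)\times^Y g^\ast q$. The two are then equated precisely by the content of \cref{lm:pllbckcartclosedfunct}: the cartesian-closedness of $g^\ast$ asserts that the evaluation map of $(g^\ast p)^{g^\ast q}$ in $\C_{/Y}$ matches the image under $g^\ast$ of the evaluation map of $p^q$ in $\C_{/X}$. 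Everything else in the proof is bookkeeping with the naturality of counits.
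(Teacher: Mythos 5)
Your proposal is correct and follows essentially the same route as the paper's proof: both unfold the two adjuncts as evaluation composites, reduce everything to the identification $\ev_{g^\ast p,\, g^\ast q}=g^\ast(\ev_{p,q})$ supplied by \cref{lm:pllbckcartclosedfunct}, and use the pasting lemma for the Frobenius-type identity $g(-\times^Y g^\ast q)=g(-)\times^X q$ and its naturality. The only difference is presentational: where you verify the key compatibility $\ev_p\circ(\epsilon_{p^q}\times^X q)=\epsilon_p\circ g(\ev_{g^\ast p})$ by comparing adjuncts via the triangle identities, the paper checks the same identity with two explicit cubes of pullback squares.
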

\begin{proof}
The fact that $g(f\times^Y g^\ast q)=gf\times^X q$ is given by the pasting-lemma for pullbacks. By definition, the adjunct of $\sigma \rho$ is the composite $$gf\times^X q\xrightarrow{\sigma \rho\times^X q}p^q\times^X q\xrightarrow{\ev_{p,q}}p$$ and the adjunct $\rho^\sharp$ is the composite $$f\times^Y g^\ast q\xrightarrow{\rho\times^Y g^\ast q}(g^\ast p)^{(g^\ast q)}\times^Y g^\ast q\xrightarrow{\ev_{g^\ast p,g^\ast q}}g^\ast p.$$ Using that $(g^\ast p)^{(g^\ast q)}\times^Y g^\ast q= g^\ast (p^q\times^X q)$, the map $\ev_{g^\ast p,g^\ast q}$ is the map $g^\ast (\ev_{p,q})$. One then needs to show that the maps $\ev_{p,q}(\sigma\rho\times^X q)$ and $\epsilon_p g^\ast(\ev_{p,q})(\rho\times^Y g^\ast q)$ are equal. Consider the diagram below, where all squares are pullbacks
$$
\bfig
\node a(-700,-300)[A]
\node w(0,0)[W]
\node e(800,0)[T]
\node y(300,-300)[Y]
\node x(1100,-300)[X]
\node gfq(-700,300)[(gf)^\ast Q]
\node geq(0,600)[g^\ast(T\times_X Q)]
\node gq(300,300)[g^\ast Q]
\node eq(800,600)[T\times_X Q]
\node q(1100,300)[Q]

\arrow|b|[a`y;f]
\arrow|b|[y`x;g]
\arrow[gfq`gq;]
\arrow[gq`q;]
\arrow[geq`eq;\sigma']
\arrow[gfq`geq;m]
\arrow[eq`q;]
\arrow[geq`gq;]
\arrow[w`y;]
\arrow|m|[e`x;p^q]
\arrow|l|[gfq`a;(gf)^\ast q]
\arrow||/@{->}|!{(0,0);(800,0)}\hole|(0.6){g^\ast q}/[gq`y;]
\arrow|r|[q`x;q]
\arrow|a|[a`w;\rho]
\arrow||/@{->}|!{(300,300);(300,-300)}\hole|(0.6){\sigma}/[w`e;]
\arrow||/@{->}|!{(-700,300);(300,300)}\hole|(0.5){}/[geq`w;]
\arrow||/@{->}|!{(300,300);(1100,300)}\hole|(0.5){}/[eq`e;]
\efig
$$
Then $m$ (as a map over $Y$) is $\rho\times^Y g^\ast q$ and $\sigma'm$ (as a map over $X$) is $\sigma\rho\times^X q$. The claim now follows thanks to the following commutative diagram, where the back, front and bottom faces of the cube (and, hence, also the top face) are pullbacks
$$
\bfig
\node a(-700,-300)[A]
\node w(0,0)[W]
\node e(800,0)[T]
\node y(300,-300)[Y]
\node x(1100,-300)[X]
\node gfq(-700,300)[(gf)^\ast Q]
\node geq(0,600)[g^\ast(T\times_X Q)]
\node gp(300,300)[g^\ast P]
\node eq(800,600)[T\times_X Q]
\node p(1100,300)[P]

\arrow|b|[a`y;f]
\arrow|b|[y`x;g]
\arrow||/@{->}^<(0.3){\epsilon_p}/[gp`p;]
\arrow[geq`eq;\sigma']
\arrow[gfq`geq;\rho\times^Y g^\ast q]
\arrow||/@{->}^<(0.5){\ev_{p,q}}/[eq`p;]
\arrow||/@{-->}^<(0.5){g^\ast(\ev_{p,q})}/[geq`gp;]
\arrow[w`y;]
\arrow|m|[e`x;p^q]
\arrow|l|[gfq`a;(gf)^\ast q]
\arrow||/@{->}|!{(0,0);(800,0)}\hole|(0.6){g^\ast p}/[gp`y;]
\arrow|r|[p`x;p]
\arrow|a|[a`w;\rho]
\arrow||/@{->}|!{(300,400);(300,-300)}\hole|(0.6){\sigma}/[w`e;]
\arrow[geq`w;]
\arrow||/@{->}|!{(300,400);(1100,400)}\hole|(0.5){}/[eq`e;]
\efig
$$
\end{proof}
\begin{lemma}[Beck-Chevalley condition]
\label{lm:BCcond}
Let $\C$ be a locally cartesian closed $\infty$-category and let
$$
\bfig
\square<400,250>[D`C`A`B;h`k`f`g]
\place(70,180)[\angle]
\efig
$$ 
be a pullback square in $\C$. Then there are canonical natural equivalences
$$
\sum_k h^\ast\xrightarrow{\ \simeq \ } g^\ast\sum_f\quad\text{and}\quad f^\ast\prod_g \xrightarrow{\ \simeq\ } \prod_h k^\ast
$$
\end{lemma}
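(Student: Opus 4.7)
The plan is to establish the first equivalence $\sum_k h^\ast \simeq g^\ast \sum_f$ by a direct computation using the pasting lemma for pullbacks, and then to deduce the second equivalence $f^\ast\prod_g\simeq\prod_h k^\ast$ via uniqueness of adjoints.

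For the first equivalence, I would fix $p\colon P\to A$ in $\C_{/A}$ and unwind the two sides. The object $\sum_k h^\ast(p)$ is the composite $k\circ h^\ast(p)\colon D\times_A P\to C$, where $D\times_A P\to D$ is the pullback of $p$ along $h$. On the other side, $g^\ast\sum_f(p)$ is the pullback $C\times_B P\to C$ of the composite $fp\colon P\to B$ along $g\colon C\to B$. Because the given square with $h,f,g,k$ is a pullback, the pasting lemma applied to the outer rectangle built from the pullback of $p$ along $h$ stacked on top of the given square exhibits $D\times_A P$ as $C\times_B P$ over $C$, naturally in $p$. The canonical natural transformation $\sum_k h^\ast\to g^\ast\sum_f$ (the Beck-Chevalley mate) is the one induced by the unit of $\sum_f\dashv f^\ast$ and the counit of $\sum_k\dashv k^\ast$ together with the coherence $k^\ast f^\ast\simeq h^\ast g^\ast$ (itself an instance of pullback-pasting), and one checks that this mate agrees with the equivalence just constructed.

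For the second equivalence, I would invoke the adjoint triples $\sum_f\dashv f^\ast\dashv\prod_f$ and similarly for $g,h,k$. Composing adjunctions yields $\sum_k h^\ast\dashv \prod_h k^\ast$ and $g^\ast\sum_f\dashv f^\ast\prod_g$. Hence the equivalence $\sum_k h^\ast\simeq g^\ast\sum_f$ established above transports, by uniqueness of right adjoints up to natural equivalence, to the desired equivalence $\prod_h k^\ast\simeq f^\ast\prod_g$.

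The main obstacle is the bookkeeping in the first step: constructing the candidate natural equivalence pointwise is immediate from the pasting lemma, but matching it with the canonical Beck-Chevalley mate requires a diagram chase with the units and counits of the relevant adjunctions. Once the first equivalence is identified with the mate, the second equivalence follows formally and automatically inherits the property of being the canonical mate of its adjoint pair.
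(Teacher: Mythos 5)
Your proposal is correct and follows essentially the same route as the paper: the first equivalence is read off pointwise from the pasting lemma for pullbacks, and the second is obtained by passing to right adjoints (the paper phrases this as ``adjoints compose,'' you phrase it as uniqueness of right adjoints for the composite adjunctions $\sum_k h^\ast\dashv\prod_h k^\ast$ and $g^\ast\sum_f\dashv f^\ast\prod_g$, which is the same argument). Your extra remark about identifying the pointwise equivalence with the canonical Beck--Chevalley mate is a detail the paper leaves implicit but does not change the approach.
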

\begin{proof}
The first map being an equivalence at every $p\in\C_{/C}$ is a restatement of the pasting lemma for pullbacks. The result for dependent products follows from the one for dependent sums by taking right adjoints, since adjoints compose. 
\end{proof}

\subsection{Function extensionality}\label{sec:funext} In homotopy type theory, given types $X$ and $A$, and morphisms $f,g\colon X\rightarrow A$, there is a map
$$(f=_{A^X}g)\longrightarrow \prod_{x : X}(f(x)=_{A}g(x))$$ 
evaluating a path between $f$ and $g$ at each $x\of X$. The statement that this map is an equivalence (for all types $A,X$ and all $f,g\colon A\rightarrow X$) is known as \emph{function extensionality}. In our setting, function extensionality can be stated as follows.

\begin{proposition}[Function Extensionality]\label{prop:funext}
Let $\C$ be a locally cartesian closed $\infty$-category. Given $A,X\in\C$, let $\ev\colon A^X\times X\rightarrow A$ be the counit of the adjunction $(-)\times X\dashv (-)^X$ and form the pullback
$$
\bfig
\square<900,300>[Q`A`A^X\times A^X\times X`A\times A;`q`\Delta A`(\ev_1,\ev_2)]
\place(70,230)[\angle]
\efig
$$
Here $\ev_1$ (resp.~$\ev_2$) is the composite of the projection $A^X\times A^X\times X\rightarrow A^X\times X$ onto the first (resp.~second) and third components with the evaluation map. Let $\pr\colon A^X\times A^X\times X\rightarrow A^X\times A^X$ be the projection map. Then there is a canonical equivalence in $\C_{/A^X\times A^X}$
$$
\Delta (A^{X})\xrightarrow{\simeq} \prod_{\pr} q
$$
\end{proposition}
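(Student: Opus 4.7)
My plan is to construct a canonical map $\Phi\colon \Delta(A^X)\to \prod_{\pr}q$ in $\C_{/A^X\times A^X}$ and then verify it is an equivalence via the Yoneda lemma in that slice. For the construction, I would first note that $\pr^\ast\Delta(A^X)$ is identified with $\Delta(A^X)\times\id_X\colon A^X\times X\to A^X\times A^X\times X$, and that the composite $(\ev_1,\ev_2)\circ(\Delta(A^X)\times\id_X)$ equals $\Delta_A\circ \ev$. The universal property of the pullback defining $q$ then yields a canonical map $\pr^\ast\Delta(A^X)\to q$ in $\C_{/A^X\times A^X\times X}$, whose adjunct under $\pr^\ast\dashv\prod_{\pr}$ is the sought $\Phi$.

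Next, to show $\Phi$ is an equivalence, I would test against an arbitrary $r=(r_1,r_2)\colon R\to A^X\times A^X$; let $\ti r_i\colon R\times X\to A$ be the adjuncts of the $r_i$ under $(-)\times X\dashv(-)^X$. Combining $\pr^\ast\dashv\prod_{\pr}$ with the pullback description $q\simeq (\ev_1,\ev_2)^\ast\Delta A$ (i.e.\ the adjunction $\Sigma_{(\ev_1,\ev_2)}\dashv (\ev_1,\ev_2)^\ast$) gives natural equivalences
\[
\C_{/A^X\times A^X}(r,\prod_{\pr}q)\simeq \C_{/A^X\times A^X\times X}(\pr^\ast r,q)\simeq \C_{/A\times A}\bigl((\ti r_1,\ti r_2),\Delta A\bigr),
\]
using $\pr^\ast r\simeq r\times\id_X$ and $(\ev_1,\ev_2)\circ(r\times\id_X)=(\ti r_1,\ti r_2)$.

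On the other hand, a map $r\to\Delta(A^X)$ in $\C_{/A^X\times A^X}$ is the space of identifications $r_1\simeq r_2$ in $\Map(R,A^X)$; the exponential adjunction identifies this with the space of identifications $\ti r_1\simeq\ti r_2$ in $\Map(R\times X,A)$, which is precisely $\C_{/A\times A}((\ti r_1,\ti r_2),\Delta A)$. The resulting equivalence $\C_{/A^X\times A^X}(r,\Delta(A^X))\simeq\C_{/A^X\times A^X}(r,\prod_{\pr}q)$ is then, by unwinding, the map induced by post-composition with $\Phi$, and the Yoneda lemma in $\C_{/A^X\times A^X}$ concludes the argument.

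The main obstacle I expect is the identification in the previous paragraph: in the $\infty$-categorical setting, $\Delta(A^X)$ is not a monomorphism, so a factorization through it is genuine homotopical data, and one must transport this data coherently across the product-exponential adjunction, then match the result with the explicit map $\Phi$. All the ingredients are formal (adjunctions and pullback universal properties), but the bookkeeping of $2$-cells is where the real work lives, and it is precisely this coherent transport that is the topos-theoretic content of function extensionality.
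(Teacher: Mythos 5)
Your proposal is correct and follows essentially the same route as the paper: test against an arbitrary object of $\C_{/A^X\times A^X}$, pass through the adjunction $\pr^\ast\dashv\prod_{\pr}$ and the pullback description of $q$, identify maps into $\Delta A$ over $A\times A$ with identifications of the two adjunct maps $R\times X\to A$, and conclude by Yoneda. The only (harmless) difference is that you construct the comparison map $\Phi$ explicitly and check that the natural equivalence is postcomposition with it, whereas the paper simply exhibits the natural equivalence of mapping spaces (phrased via homotopy fibers of hom-spaces in slice categories) and lets representability supply the canonical equivalence.
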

\begin{proof}
Let $k\colon E\rightarrow A^X\times A^X$ be an object in $\C_{/A^X\times A^X}$. By adjointness, there is a natural equivalence
$$\C_{/A^X\times A^X}\left(k,\prod_{\pr}q\right) \simeq\C_{/A^X\times A^X\times X}(k\times X,q)$$
By the description of hom-spaces in $\infty$-slice categories (see \cite[Lemma 5.5.5.12]{htt}) and since $Q$ is a pullback, we get a homotopy pullback square of $\infty$-groupoids
$$
\bfig
\square<1300,300>[\C_{/A^X\times A^X\times X}(k\times X,q)`\C(E\times X, A)`\ast`\C(E\times X, A\times A);``\C(E\times X,\Delta A)`(\ev_1,\ev_2)\circ (k\times X)]
\place(100,200)[\angle]
\efig
$$
But $\C(E,\Delta (A^X))\simeq\C(E\times X,\Delta A)\simeq \Delta_{\C(E\times X,A)}$,
which means that
$$
\C_{/A^X\times A^X\times X}(k\times X,q)\simeq\hofib_k(\C(E,\Delta(A^X)))\simeq\C_{/A^X\times A^X}(k,\Delta(A^X)),
$$
where the last equivalence is again \cite[Lemma 5.5.5.12]{htt}. We then get the needed composite natural equivalence
$$
\C_{/A^X\times A^X}\left(k,\prod_{\pr}q\right) \simeq\C_{/A^X\times A^X}(k,\Delta(A^X)).
$$
\end{proof}

\cref{prop:funext} can be promoted to a result about diagonals of dependent products. We now set up what we need to state this generalization of \cref{prop:funext}.

\smallskip
Let $p\colon E\rightarrow X$ be a map in $\C$ and let
$$
\bfig
\Vtriangle|alr|<400,250>[\left(\prod_X p\right)\times X`E`X;\epsilon`\pi`p]
\efig
$$
be the component of the counit of the adjunction $(-)\times X\dashv \prod_X$ at $p\in\C_{/X}$. Here $\pi$ is the projection map onto $X$. The projection map
$$
\left(\prod_X p\right)\times \left(\prod_X p\right)\times X\rightarrow X
$$
is the product object $\pi\times^X\pi$ in $\C_{/X}$. We can thus describe the product map $\epsilon\times^X\epsilon\colon\pi\times^X\pi\rightarrow p\times^X p$ in $\C_{/X}$ as the map over $X$ given by
$$(\epsilon_1,\epsilon_2)\colon \left(\prod_X p\right)\times \left(\prod_X p\right)\times X\rightarrow E\times_X E,$$
where $\epsilon_1$ (resp.~$\epsilon_2$) is the composite of the projection $$\left(\prod_X p\right)\times \left(\prod_X p\right)\times X\rightarrow \left(\prod_X p\right)\times X$$ onto the first (resp.~the second) and third components with the counit map. The pullback of $\Delta p$ along $\epsilon\times^X\epsilon$ in $\C_{/X}$ can be described as the pullback square
\begin{equation}
\label{eq:depfunextsq}
\bfig
\square<1300,300>[Q'`E`\left(\prod_X p\right)\times \left(\prod_X p\right)\times X`E\times_X E;`q'`\Delta p`(\epsilon_1,\epsilon_2)]
\place(100,200)[\angle]
\efig
\end{equation}
in $\C$ and $Q'$ can be naturally regarded as an object over $X$. 
\begin{proposition}[Dependent Function Extensionality]\label{prop:depfunext}
Let $\C$ be a locally cartesian closed $\infty$-category and let $p\colon E\rightarrow X$ be a map in $\C$. Construct $q'$ as in \eqref{eq:depfunextsq} and let $$\pr\colon \left(\prod_X p\right)\times \left(\prod_X p\right)\times X\rightarrow \left(\prod_X p\right)\times \left(\prod_X p\right)$$
be the projection map. Then there is a canonical equivalence in $\C_{/\left(\prod_X p\right)\times \left(\prod_X p\right)}$
$$
\Delta\left(\prod_X p\right)\stackrel{\simeq}{\longrightarrow} \prod_{\pr} q'
$$
\end{proposition}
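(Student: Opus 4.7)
The plan is to mimic the proof of \cref{prop:funext}, now replacing the non-dependent $A^X$ by $\prod_X p$ and tracking the resulting slice structures over $X$ carefully. Concretely, I will exhibit, naturally in any $k\colon F\to (\prod_X p)\times(\prod_X p)$ in $\C_{/(\prod_X p)^2}$, an equivalence
$$\C_{/(\prod_X p)^2}\!\left(k,\prod\nolimits_{\pr} q'\right)\simeq \C_{/(\prod_X p)^2}\!\left(k,\Delta(\prod\nolimits_X p)\right),$$
and then conclude via the Yoneda lemma.

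The calculation will proceed in three steps. First, the adjunction $\pr^\ast\dashv \prod_{\pr}$, with $\pr^\ast(k)=k\times X$, rewrites the left-hand side as $\C_{/(\prod_X p)^2\times X}(k\times X, q')$; the pullback square \eqref{eq:depfunextsq} and \cite[Lemma~5.5.5.12]{htt} then identify this, naturally in $k$, with
$$\hofib_{(\epsilon_1,\epsilon_2)\circ (k\times X)}\!\left(\C(F\times X, E)\xrightarrow{\Delta p\circ -}\C(F\times X, E\times_X E)\right).$$
Second, since $E\times_X E = p\times^X p$ in $\C_{/X}$, the map $\Delta p\circ -$ is compatible with the projections to $\C(F\times X, X)$ given by post-composition with $p$, respectively with $E\times_X E\to X$. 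Because $\epsilon$ is a map over $X$, the basepoint $(\epsilon_1,\epsilon_2)\circ (k\times X)$ lies in the fiber over the projection $F\times X\to X$; restricting to this fiber, the displayed hofib becomes
$$\hofib_k\!\left(\Delta\colon\C_{/X}(F\times X, p)\to \C_{/X}(F\times X, p)^2\right).$$
Third, the adjunction $(-)\times X\dashv \prod_X$ identifies $\C_{/X}(F\times X, p)\simeq \C(F,\prod_X p)$, rewriting the last hofib as $\hofib_k(\Delta\colon \C(F,\prod_X p)\to \C(F,(\prod_X p)^2))$. A final application of \cite[Lemma~5.5.5.12]{htt} recognizes this as $\C_{/(\prod_X p)^2}(k,\Delta(\prod_X p))$, as needed.

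The main anticipated obstacle is the middle step, where a hofib of $\Delta p\circ -$ taken in absolute hom-spaces of $\C$ must be reduced to a hofib of the diagonal on the sliced hom-space $\C_{/X}(F\times X, p)$. This requires carefully unpacking how $\Delta p$ interacts with the projection $E\times_X E\to X$, and confirming that the basepoint genuinely lies over the projection $\pi_F\colon F\times X\to X$ (using $p\circ\epsilon = \pi$). Once this identification is pinned down, everything else is a routine chain of adjunctions and an appeal to Yoneda.
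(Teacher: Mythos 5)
Your proposal is correct and is exactly the argument the paper intends: the paper omits the proof of \cref{prop:depfunext}, stating only that it is the proof of \cref{prop:funext} \emph{mutatis mutandis}, and your write-up supplies precisely those mutanda (the chain $\pr^\ast\dashv\prod_{\pr}$, \cite[Lemma~5.5.5.12]{htt}, the pullback square \eqref{eq:depfunextsq}, and $(-)\times X\dashv\prod_X$). The one genuinely new point --- reducing the homotopy fiber of $\Delta p\circ -$ on absolute hom-spaces to the fiber of the diagonal on $\C_{/X}(F\times X,p)$, using that the basepoint lies over $\pi_F$ because $p\circ\epsilon=\pi$ --- is correctly identified and correctly handled by the pasting law for homotopy pullbacks.
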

\emph{Mutatis mutandis}, the proof is the same as for \cref{prop:funext}, so we omit it.
\begin{remark}
\label{rmk:localnatureoffunext}
If $\C$ is a locally cartesian closed $\infty$-category, then so is $\C_{/X}$ for any $X\in\C$. Thus, \cref{prop:funext} and \cref{prop:depfunext} hold true also in $\C_{/X}$ and give, for maps $p\colon E\rightarrow X$, $f\colon Y\rightarrow X$ and $q\colon M\rightarrow Y$ in $\C$, an alternative description of the diagonal of $p^f\in\C_{/X}$ and of $\Delta \left(\prod_f q\right)$ as a map in $\C_{/Y}$.
\end{remark}

\subsection{Contractibility}
\label{subsec:contractibility} 
We provide here a criterion for the existence and the uniqueness of extensions of one map along another one with the same domain. This result is linked to the notion of contractibility in $\C$.\par
Recall that an object $A\in\C$ is \emph{contractible} if the map $A\rightarrow 1$ is an equivalence. When we apply this definition to an object $p\in\C_{/X}$, this means that $p$ is contractible in $\C_{/X}$ exactly when, seen as a map in $\C$, it is an equivalence. Since equivalences in an $\infty$-topos form a local class of maps, we immediately get the following result.

\begin{lemma}
\label{lm:iscontrandpllbck}
Let $\E$ be an $\infty$-topos and let $f\colon Y\rightarrow X$ be an effective epimorphism in $\E$. For any $p\in\E_{/X}$, $f^\ast(p)\in\E_{/Y}$ is contractible if and only if $p$ is.
\end{lemma}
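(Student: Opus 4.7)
The plan is to unpack the definition of contractibility and invoke the fact, recalled just before the statement, that equivalences in an $\infty$-topos form a local class of maps. Concretely, an object $p\in\E_{/X}$ is contractible in $\E_{/X}$ if and only if the structure map $p\colon E\rightarrow X$ is an equivalence in $\E$, since contractibility in a slice $\E_{/X}$ means the terminal map (which is $p$ itself viewed in $\E$) is an equivalence. So the statement to prove reduces to: given an effective epimorphism $f\colon Y\rightarrow X$, a map $p\in\E_{/X}$ is an equivalence in $\E$ if and only if $f^\ast(p)\in\E_{/Y}$ is an equivalence.

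The forward implication is immediate: equivalences are stable under pullback in any $\infty$-category. For the non-trivial direction, I would use descent for the local class of equivalences. Since equivalences form a local class of maps in the $\infty$-topos $\E$ (\cite[Prop.~3.12]{locinaninftytopos} type results, but in fact this is automatic: equivalences are the prototypical example of a local class in an $\infty$-topos, via the identification with maps classified by $\ast\rightarrow\U$), being an equivalence can be checked after pulling back along an effective epimorphism. Explicitly: given the \v{C}ech nerve $Y^{\bullet+1}_{/X}$ of $f$, the colimit recovers $X$ (since $f$ is an effective epi), and the property of being an equivalence satisfies descent along such effective epi covers. Hence if $f^\ast(p)$ is an equivalence, then so is $p$.

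Therefore, combining the two translations, $p\in\E_{/X}$ is contractible iff $p$ is an equivalence in $\E$, iff $f^\ast(p)$ is an equivalence in $\E$, iff $f^\ast(p)\in\E_{/Y}$ is contractible. The main (and really the only) thing to be careful about is making sure the notion of ``local class of maps'' employed in \cite{locinaninftytopos} actually packages the descent property we need here for effective epis; this is standard, but the cleanest way is simply to cite the characterization that a map belongs to a local class iff its pullback along any effective epimorphism does. No computation is required beyond these translations.
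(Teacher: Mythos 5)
Your proposal is correct and matches the paper's argument exactly: the paper states the lemma with no separate proof, deriving it immediately from the preceding observation that contractibility of $p$ in $\E_{/X}$ means $p$ is an equivalence in $\E$, together with the fact that equivalences form a local class of maps (hence are detected after pullback along an effective epimorphism). Your additional remarks about stability of equivalences under pullback and the descent packaging of ``local class'' are exactly the content the paper leaves implicit.
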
 

The following lemma is a standard exercise in $2$-category theory since the notions of slice $\infty$-categories and of adjunctions between $\infty$-categories can be completely characterized in the $2$-category of $\infty$-categories --- see \cite[\S 3 and 4]{eleofinftycats}.

\begin{lemma}
\label{lm:adjonslice}
Let $\adj[F][G]{\C}{\D}$ be an adjunction and let $D\in\D$. Then there is an induced adjunction on slice categories $$\adj[\li{F}][\li{G}]{\C_{/GD}}{\D_{/D}}$$
where, for $p\in\C_{/GD}$ and $q\in\D_{/D}$, $\bar{F}(p)=\epsilon_D Fp$ and $\bar{G}(q)=Gq$. \qed
\end{lemma}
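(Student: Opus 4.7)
My plan is to produce the adjunction by exhibiting a natural equivalence of mapping $\infty$-groupoids
$$
\D_{/D}(\bar{F}(p), q) \simeq \C_{/GD}(p, \bar{G}(q))
$$
for all $p\in\C_{/GD}$ and $q\in\D_{/D}$, by transporting the existing adjunction equivalence $\D(FX, Y)\simeq \C(X, GY)$ through the hom-space description of slice $\infty$-categories.

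First, I will apply \cite[Lemma 5.5.5.12]{htt}: the left side is the homotopy fiber of $q\circ(-)\colon \D(FX, Y)\to \D(FX, D)$ over $\bar{F}(p) = \epsilon_D \circ Fp$, and the right side is the homotopy fiber of $Gq\circ(-)\colon \C(X, GY)\to \C(X, GD)$ over $p$. Naturality of the adjunction equivalence in the second variable intertwines the two post-composition maps. It then remains to check that the transpose $f\mapsto Gf\circ \eta_X$ sends $\epsilon_D\circ Fp$ to $p$; but
$$
G(\epsilon_D)\circ GF(p)\circ\eta_X = G(\epsilon_D)\circ \eta_{GD}\circ p = p,
$$
by naturality of $\eta$ followed by the triangle identity $G\epsilon\cdot \eta G = \id_G$. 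Hence the homotopy fibers are equivalent. Naturality in $p$ follows from naturality of the adjunction equivalence in the first variable (pre-composition with $f\colon X'\to X$ corresponds to pre-composition with $Ff$). From the resulting equivalence one reads off that the unit is $\bar\eta_p = \eta_X$ and the counit is $\bar\epsilon_q = \epsilon_Y$, with compatibility over $GD$ respectively $D$ given by the computation above and by naturality of $\epsilon$.

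The main obstacle is purely coherence: in the $\infty$-categorical setting, the equalities used above are chosen homotopies that must be assembled into a coherent natural equivalence of hom-space functors, and the triangle identities for $\bar F\dashv \bar G$ need to be verified up to higher simplices. The cleanest workaround, as suggested in the paper, is to work inside the $(\infty,2)$-category of $\infty$-categories (see \cite[\S 3--4]{eleofinftycats}): adjunctions there are precisely internal adjunctions, and slice $\infty$-categories are realized via comma objects. The constructions above define $\bar F$, $\bar G$, $\bar\eta$, $\bar\epsilon$ via universal properties of comma objects, and the triangle identities for $\bar F\dashv \bar G$ reduce immediately to those for $F\dashv G$. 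Since both sides of the desired equivalence are invariant under such comma constructions, this bypasses the explicit simplicial bookkeeping.
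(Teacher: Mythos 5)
Your proposal is correct, and its concluding paragraph is exactly the proof the paper intends: the paper omits the argument entirely, remarking only that it is a standard exercise once adjunctions and slices are recognized inside the (homotopy) $2$-category of $\infty$-categories \cite[\S 3--4]{eleofinftycats}. What you add on top of that is a direct, elementary verification: identifying both slice mapping spaces as homotopy fibers via \cite[Lemma 5.5.5.12]{htt}, transporting one fiber to the other along the adjunction equivalence $\D(FX,Y)\simeq\C(X,GY)$ (which intertwines postcomposition with $q$ and with $Gq$), and checking that the basepoint $\epsilon_D\circ Fp$ transposes to $p$ via $G\epsilon_D\circ\eta_{GD}\circ p=p$. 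That computation is right, and the coherence concern you raise about it is legitimate but can be dispatched more cheaply than by retreating to comma objects: by the pointwise criterion for adjoints (e.g.\ \cite[Prop.~5.2.2.8 \& 5.2.4.2]{htt}), to show $\li{G}$ has a left adjoint it suffices to exhibit, for each $p$, an object $\li{F}(p)$ and a unit map $p\to\li{G}\,\li{F}(p)$ (here $\eta_X$, lying over $GD$ by the triangle identity) inducing equivalences on mapping spaces --- which is precisely your homotopy-fiber calculation --- after which functoriality of $\li{F}$ and the triangle identities for $\li{F}\dashv\li{G}$ come for free. So either of your two routes closes the argument; the second coincides with the paper's, and the first is a self-contained alternative.
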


\begin{lemma}
\label{lm:auxlemmaforuniqueextalongfibers}
Let $p\colon D\rightarrow B\times C$ be a map in a locally cartesian closed $\infty$-category $\C$. Consider the map $q\colon E\rightarrow B\times C^B$ given by the pullback square
$$
\bfig
\square<600,300>[E`D`B\times C^B`B\times C;`q`p`(\pr_1,\ev)]
\place(70,230)[\angle]
\efig 
$$
Then there is an equivalence
$$
\left(\prod_{B}\sum_{B\times C\rightarrow B} p\right) \simeq \left(\sum_{C^B}\prod_{B\times C^B\rightarrow C^B}q\right)
$$
\end{lemma}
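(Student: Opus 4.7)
The plan is to prove the equivalence via the Yoneda lemma. Conceptually, both sides represent the ``object of sections of $p\colon D\to B\times C$ viewed as fibered over $B$'': the equivalence expresses the tautology that specifying such a section amounts to specifying its second component $B\to C$ together with a section over its graph.

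For every $Z\in\C$, I would compute both hom-spaces as follows. By the adjunction $(B\to 1)^*\dashv \prod_{B\to 1}$, $\C(Z,\prod_B\sum_\pi p)$ is the space of morphisms $\phi\colon B\times Z\to D$ with $\pi p\phi = \pr_1$, where $\pi\colon B\times C\to B$ and $\pr_1\colon B\times Z\to B$ are the projections. On the other hand, $\sum_{C^B\to 1}\prod_{\pi'}q$ is just the underlying object of $\prod_{\pi'}q\in\C_{/C^B}$ (with $\pi'\colon B\times C^B\to C^B$ the projection); hence a map $Z\to\text{RHS}$ amounts to a pair $(f,\sigma)$ with $f\colon Z\to C^B$ and $\sigma\colon f\to \prod_{\pi'}q$ in $\C_{/C^B}$. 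By $\pi'^*\dashv\prod_{\pi'}$ together with the pullback defining $q$, such a $\sigma$ is the datum of a morphism $\phi\colon B\times Z\to D$ satisfying $p\phi = (\pr_1, \ev(\pr_1, f\pr_2))$, with $\pr_2\colon B\times Z\to Z$. Applying the exponential adjunction $(-)\times B\dashv (-)^B$ to reindex $f$ as $g:=\ev(\pr_1, f\pr_2)\colon B\times Z\to C$, the space $\C(Z,\text{RHS})$ becomes the space of pairs $(g,\phi)$ with $p\phi = (\pr_1, g)$.

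The forgetful map $(g,\phi)\mapsto\phi$ then provides the desired equivalence to $\C(Z,\text{LHS})$: given any $\phi$ with $\pi p\phi = \pr_1$, the $C$-component $g = \pr_C p\phi$ is uniquely determined (with $\pr_C\colon B\times C\to C$). Naturality in $Z$ follows from the naturality of each adjunction invoked, so Yoneda yields the equivalence of objects.

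The main obstacle will be the coherence bookkeeping for the various $\infty$-adjunctions and the pullback identifications; however, since every identification comes from standard adjunction units and counits plus the universal property of the pullback defining $q$, the higher coherence data is induced automatically and the argument should go through cleanly.
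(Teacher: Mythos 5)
Your proposal is correct and is essentially the paper's argument: both proofs are Yoneda computations running through the same chain of adjunctions ($B\times(-)\dashv\prod_B$, $\pi'^\ast\dashv\prod_{\pi'}$, the exponential adjunction, and the pullback definition of $q$). The only difference is packaging: the paper first observes that both sides carry natural maps to $C^B$ and proves the stronger statement that they agree as objects of $\C_{/C^B}$, testing against $k\colon Z\to C^B$, which lets the hom-space computation close up as a single chain of adjunction equivalences and spares you the final step of contracting the pair $(g,\ \pr_C p\phi\simeq g)$ that your version needs to identify the two spaces of $\phi$'s.
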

\begin{proof}
Let $\pr_B\colon B\times C\rightarrow B$ and $\pr_{C^B}\colon B\times C^B\rightarrow C^B$ be the projection maps. Note that $\Pi_{\pr_{C^B}}q$ is, by definition, a map 
$
\Pi_{\pr_{C^B}}q\colon\Sigma_{C^B}\Pi_{\pr_{C^B}}q\longrightarrow C^B.
$
On the other hand, we can see $p$ as a map $p\colon \Sigma_{\pr_B}p\rightarrow \pr_B$ in $\C_{/B}$. Setting $\alpha :=\Pi_B p$, we then get a map
$$
\alpha\colon\prod_{B}\sum_{\pr_{B}} p \longrightarrow \prod_B \pr_B = C^B.
$$
It is therefore sufficient to show that $\alpha\simeq \Pi_{\pr_{C^B}}q$ in $\C_{/C^B}$. Let $k\colon Z\rightarrow C^B$ be an object in $\C_{/C^B}$. Using \cref{lm:adjonslice} applied to the adjunction $$\adj[B\times (-)][\prod_B][1.5cm]{\C}{\C_{/B}}$$
we get
$
\C_{/C^B}(k,\alpha)\simeq\left(\C_{/B}\right)_{/\pr_{B}}\left(\kappa^\sharp,p\right).
$
Here $\kappa^\sharp$ is the composite $(\pr_1,\ev)(B\times k)$, seen as a map from $(B\times Z\xrightarrow{\pr_1} B)$ to $\pr_B$, and thus as an object in $\left(\C_{/B}\right)_{/\pr_{B}}$. Since $\left(\C_{/B}\right)_{/\pr_{B}}\simeq\C_{/B\times C}$ and using the definition of $q=(\pr_1,\ev)^\ast p$, we obtain
$$
\C_{/C^B}(k,\alpha)\simeq\C_{/B\times C}\left(\kappa^\sharp,p\right)=\C_{/B\times C}\left((\pr_1,\ev)(B\times k),p\right)\simeq 
$$
$$
\simeq\C_{/B\times C^B}(B\times k,q)=\C_{/B\times C^B}\left((\pr_{C^B})^\ast k,q\right)\simeq\C_{/C^B}\left(k,\prod_{\pr_{C^B}}q\right),
$$
whence $\alpha\simeq \Pi_{\pr_{C^B}}q$, as needed.
\end{proof}

Intuitively, the following result is about the existence of a unique extension of a map $f$ along another map $g$ in terms of unique extensions along the fibers of $g$. Taking fibers out of the picture, we get the following odd-looking statement.

\begin{proposition}[cf.~{\cite[Lemma 2.23]{locinhott}}]
\label{prop:uniqueextalongfibers}
Let $f\colon A\rightarrow C$ and $g\colon A\rightarrow B$ be two maps in a locally cartesian closed $\infty$-category $\C$. Form the following pullback squares in $\C$
$$
\bfig
\square(0,0)<700,300>[A\times C`B`A\times B\times C`B\times B;`(\pr_A,g\times C)`\Delta B`g\times \pr_B]
\place(100,200)[\angle]
\square(1600,0)<700,300>[B\times A`C`A\times B\times C`C\times C;`(\pr_A,B\times f)`\Delta C`f\times \pr_C]
\place(1700,200)[\angle]
\efig
$$
Consider the following object in $\C_{/B}$
$$
E:=\sum_{B\times C\rightarrow B}\left(\prod_{A\times B\times C\rightarrow B\times C}(\pr_A,B\times f)^{(\pr_A,g\times C)}\right)
$$
where the displayed internal hom is taken in $\C_{/A\times B\times C}$. Then the following hold.
\begin{itemize}
\item[(i)] If we let $f\colon C^B\rightarrow C^A$ be the composite $C^B\rightarrow 1\xrightarrow{f}C^A$, there is an equivalence 
\begin{equation}
\label{eq:extfg}
\prod_B E\simeq\sum_{C^B}(f,C^g)^\ast\left(\Delta(C^A)\right)
\end{equation} 
\item[(ii)] The space of global elements of the right-hand side in \eqref{eq:extfg} is equivalent to the space $\Ext(f,g)$ of extensions of $f$ along $g$. In particular, if $\prod_B E$ is contractible in $\C_{/B}$, then there is a unique dotted extension in 
$$
\bfig
\node a(0,0)[A]
\node b(0,-300)[B]
\node c(300,0)[C]
\arrow|l|[a`b;g]
\arrow|a|[a`c;f]
\arrow/-->/[b`c;]
\efig
$$
\end{itemize}
\begin{proof}
We start by proving the first claim. We have
$$
(\pr_A,B\times f)^{(\pr_A,g\times C)}=\prod_{(\pr_A,g\times C)}(\pr_A,g\times C)^\ast (\pr_A, B\times f)
$$
Since $(\pr_A,B\times f)=(f\times \pr_C)^\ast (\Delta C)$ and $(f\times \pr_C)(\pr_A,g\times C)=f\times C$, we get that $(\pr_A,g\times C)^\ast (\pr_A, B\times f)=(\id_A,f)\colon A\rightarrow A\times C.$ Therefore, letting $\pr_{B\times C}\colon A\times B\times C\rightarrow B\times C$ be the projection map, we have
$$
\prod_{\pr_{B\times C}}(\pr_A,B\times f)^{(\pr_A,g\times C)}=\prod_{\pr_{B\times C}}\left(\prod_{(\pr_A,g\times C)}(\id_A,f)\right)=\prod_{g\times C}(\id_A,f)
$$
Using \cref{lm:auxlemmaforuniqueextalongfibers}, we then get
$$
\prod_B E=\prod_B\sum_{B\times C\rightarrow B}\prod_{g\times C}(\id_A,f)\simeq
\sum_{C^B}\prod_{\pr_{C^B}}(\pr_{1},\ev)^\ast\left(\prod_{g\times C}(\id_A,f)\right)=: E'
$$
where $\pr_{C^B}\colon B\times C^B\rightarrow C^B$ is the projection map. There are pullback squares
$$
\bfig
\node acb(0,0)[A\times C^B]
\node ac(950,0)[A\times C]
\node bcb(0,-250)[B\times C^B]
\node bc(950,-250)[B\times C]

\node a(1600,0)[A]
\node c(2200,0)[C]
\node ac1(1600,-250)[A\times C]
\node cc(2200,-250)[C\times C]

\arrow|a|[acb`ac;(\id_A,\ \ev(g\times C^B))]
\arrow|l|[acb`bcb;g\times C^B]
\arrow|r|[ac`bc;g\times C]
\arrow|b|[bcb`bc;(\pr_1,\ev)]

\arrow|a|[a`c;f]
\arrow|l|[a`ac1;(\id_A,f)]
\arrow|r|[c`cc;\Delta C]
\arrow|b|[ac1`cc;f\times C]

\place(70,-70)[\angle]
\place(1670,-70)[\angle]
\efig
$$
Thus, using the Beck-Chevalley condition, we get
$$
E'\simeq \sum_{C^B}\prod_{\pr_{C^B}}\prod_{g\times C^B}\left((f\times C)(\id_A,\ \ev(g\times C^B))\right)^\ast(\Delta C)\simeq
$$
$$
\simeq \sum_{C^B}\prod_{A\times C^B\rightarrow C^B}\left((f\times C)(\id_A,\ \ev(g\times C^B))\right)^\ast(\Delta C)\simeq
$$
$$
\simeq \sum_{C^B}\prod_{A\times C^B\rightarrow C^B} \left(\ev(A\times (f,C^g))\right)^\ast(\Delta C)=:E''
$$
where the last equivalence is due to the fact that $(f\times C)(\id_A,\ \ev(g\times C^B))$ is equal to the composite map $\ev\circ \left(A\times (f,C^g)\right)$. Using the Beck-Chevalley condition applied to the pullback square
$$
\bfig
\node acb(0,0)[A\times C^B]
\node acaca(1000,0)[A\times C^A\times C^A]
\node cb(0,-250)[C^B]
\node caca(1000,-250)[C^A\times C^A]

\arrow|a|[acb`acaca;A\times (f,C^g)]
\arrow|l|[acb`cb;\pr_2]
\arrow|r|[acaca`caca;\pr_2]
\arrow|b|[cb`caca;(f,C^g)]

\place(70,-70)[\angle]
\efig
$$
we further deduce that
$$
E''=\sum_{C^B}\prod_{A\times C^B\rightarrow C^B}(A\times (f,C^g))^\ast (\ev^\ast(\Delta C))\simeq
$$
$$
\simeq \sum_{C^B}(f,C^g)^\ast\left(\prod_{\pr_2}\ev^\ast(\Delta C)\right)\simeq\sum_{C^B}(f,C^g)^\ast (\Delta(C^A))
$$
where the last equivalence is given by Function Extensionality.\par
For the second part, $P:=\sum_{C^B}(f,C^g)^\ast (\Delta(C^A))$ is the pullback object of $C^g$ along $f\colon 1\rightarrow C^A$ and thus $\C(1,P)$ is the homotopy fiber of $\C(1,C^g)$ at $f\in\C(1,C^A)$. The latter homotopy fiber gives the needed space of extensions.
\end{proof}
\end{proposition}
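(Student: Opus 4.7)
The plan is to prove part (i) by an extended symbolic computation using Beck-Chevalley (\cref{lm:BCcond}), standard pullback manipulations, Function Extensionality (\cref{prop:funext}), and the auxiliary \cref{lm:auxlemmaforuniqueextalongfibers}, which is the key device for converting $\prod_B\sum_{B\times C\to B}$ into $\sum_{C^B}\prod_{B\times C^B\to C^B}$. Part (ii) then follows formally by identifying $P := \sum_{C^B}(f,C^g)^\ast(\Delta(C^A))$ as a homotopy pullback and invoking the exponential adjunction.

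For (i), first I would unfold the internal hom using $p^q \simeq \prod_q q^\ast p$, so that $(\pr_A, B\times f)^{(\pr_A, g\times C)}$ becomes $\prod_{(\pr_A, g\times C)}(\pr_A, g\times C)^\ast(\pr_A, B\times f)$. From the two defining pullback squares one has $(\pr_A, B\times f) = (f\times \pr_C)^\ast(\Delta C)$ and the composite $(f\times \pr_C)\circ (\pr_A, g\times C)$ equals $f\times C$, so the pullback simplifies to $(\id_A, f)\colon A\to A\times C$. Composing with the outer $\prod_{\pr_{B\times C}}$ and using that dependent products compose, the inner bracket reduces to $\prod_{g\times C}(\id_A, f)$ in $\C_{/B\times C}$.

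Next, \cref{lm:auxlemmaforuniqueextalongfibers} applied with $p := \prod_{g\times C}(\id_A, f)$ swaps the outer $\prod_B\sum_{B\times C\to B}$ into $\sum_{C^B}\prod_{\pr_{C^B}}(\pr_1, \ev)^\ast(-)$. A Beck-Chevalley move along the pullback square with corners $A\times C^B$, $A\times C$, $B\times C^B$, $B\times C$ (top map $(\id_A,\ev\circ(g\times C^B))$, bottom map $(\pr_1, \ev)$) turns $(\pr_1, \ev)^\ast \prod_{g\times C}(\id_A, f)$ into $\prod_{g\times C^B}\bigl((f\times C)\circ(\id_A,\ev\circ(g\times C^B))\bigr)^\ast(\Delta C)$. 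Composing the two nested dependent products and rewriting the composite as $\ev\circ(A\times (f, C^g))$, one final Beck-Chevalley pulls $(f, C^g)$ out of $\prod_{A\times C^B \to C^B}$, leaving the inner term $\prod_{\pr_2}\ev^\ast(\Delta C)$; Function Extensionality recognizes this as $\Delta(C^A)$, and we obtain $\sum_{C^B}(f, C^g)^\ast(\Delta(C^A))$.

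For (ii), note that $(f, C^g)^\ast(\Delta(C^A))$ is the pullback of $\Delta(C^A)\colon C^A \to C^A\times C^A$ along $(f, C^g)\colon C^B \to C^A\times C^A$, which witnesses equality of $C^g$ and the constant map $f$ as maps $C^B\to C^A$. Its total space $P$ is therefore the pullback of $C^g\colon C^B \to C^A$ along $f\colon 1\to C^A$, so $\C(1, P)\simeq\hofib_f(\C(1, C^g))$, and the exponential adjunction identifies this with the space of $h\colon B\to C$ with $hg = f$, i.e.\ $\Ext(f, g)$. If $\prod_B E$ is contractible then by (i) so is $P$, hence $\Ext(f, g)$ is contractible, giving the unique dotted extension. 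The main obstacle I anticipate is not any single step but the bookkeeping: tracking which object lives in which slice, verifying that each intermediate square is actually a pullback, and in particular checking the equality $(f\times C)\circ(\id_A,\ev\circ(g\times C^B)) = \ev\circ(A\times (f, C^g))$, which is a short calculation on components but easy to bungle amid the tangle of evaluation maps and pairings.
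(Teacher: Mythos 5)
Your proposal follows the paper's own proof essentially step for step: the same unfolding of the internal hom to $\prod_{g\times C}(\id_A,f)$, the same application of \cref{lm:auxlemmaforuniqueextalongfibers} to swap $\prod_B\sum_{B\times C\to B}$ for $\sum_{C^B}\prod_{\pr_{C^B}}$, the same two Beck--Chevalley moves, the same appeal to Function Extensionality, and the same identification of $P$ as a homotopy fiber for part (ii). It is correct and takes essentially the same approach as the paper.
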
 
\bibliographystyle{amsalpha2} 
\bibliography{Llocalization.bib}

\providecommand{\bysame}{\leavevmode\hbox to3em{\hrulefill}\thinspace}
\providecommand{\MR}{\relax\ifhmode\unskip\space\fi MR }
\providecommand{\MRhref}[2]{%
  \href{http://www.ams.org/mathscinet-getitem?mr=#1}{#2}
}
\providecommand{\href}[2]{#2}
\begin{thebibliography}{{Kap}14}

\bibitem[CORS18]{locinhott}
J.~D. {Christensen}, M.~{Opie}, E.~{Rijke}, and L.~{Scoccola},
  \emph{{Localization in Homotopy Type Theory}}, to appear in \textit{Higher
  Structures}, ArXiv e-prints (2018), \href {http://arxiv.org/abs/1807.04155}
  {\path{arXiv:1807.04155}}.

\bibitem[GK17]{univinloccarclosed}
D.~{Gepner} and J.~{Kock}, \emph{Univalence in locally cartesian closed
  {$\infty$}-categories}, Forum Math. \textbf{29} (2017), no.~3, 617--652,
  \href {http://dx.doi.org/10.1515/forum-2015-0228}
  {\path{doi:10.1515/forum-2015-0228}}.

\bibitem[{Joh}02]{sketches}
P.~T. {Johnstone}, \emph{Sketches of an elephant: a topos theory compendium.
  {V}ol. 1}, Oxford Logic Guides, vol.~43, The Clarendon Press, Oxford
  University Press, New York, 2002.

\bibitem[{Joy}08]{notesonqcat}
A.~{Joyal}, \emph{Notes on quasi-categories}, 2008.

\bibitem[{Kap}14]{joyalconjhott}
K.~R. {Kapulkin}, \emph{Joyal's {C}onjecture in homotopy type theory}, ProQuest
  LLC, Ann Arbor, MI, 2014, Thesis (Ph.D.)--University of Pittsburgh.

\bibitem[{Lur}09]{htt}
J.~{Lurie}, \emph{Higher topos theory}, Annals of Mathematics Studies, vol.
  170, Princeton University Press, Princeton, NJ, 2009, \href
  {http://dx.doi.org/10.1515/9781400830558} {\path{doi:10.1515/9781400830558}}.

\bibitem[{Rez}10]{topandhomtop}
C.~{Rezk}, \emph{Toposes and homotopy toposes}, 2010.

\bibitem[RSS17]{modinhott}
E.~{Rijke}, M.~{Shulman}, and B.~{Spitters}, \emph{{Modalities in homotopy type
  theory}}, ArXiv e-prints (2017), \href {http://arxiv.org/abs/1706.07526}
  {\path{arXiv:1706.07526}}.

\bibitem[RV18]{eleofinftycats}
E.~{Riehl} and D.~{Verity}, \emph{Elements of $\infty$-category theory},
  Preprint available at \url{www.math.jhu.edu/~eriehl/elements.pdf} (2018).

\bibitem[{Ver}]{locinaninftytopos}
M.~{Vergura}, \emph{{Localization theory in an $\infty$-topos}}, ArXiv
  preprint.

\end{thebibliography}

\end{document}